    \newcommand{\SStar}{{\mbox{\rm\texttt{SStar}}}}   
     \newcommand{\SStarf}{\mbox{\rm\texttt{SStar}}_{\!\mbox{\tiny\it\texttt{f}}}}  
    \newcommand{\SStarstab}{\overline{\mbox{\rm\texttt{SStar}}}}
     \newcommand{\SStarstabft}{\widetilde{\mbox{\rm\texttt{SStar}}}}
     \newcommand{\SStarsp}{\mbox{\rm\texttt{SStar}}_{\!\mbox{\tiny\it\texttt{sp}}}}
      \newcommand{\SStarspf}{\mbox{\rm\texttt{SStar}}_{\!\mbox{\tiny\it\texttt{f,sp}}}}
        \newcommand{\SStareabf}{\mbox{\rm\texttt{SStar}}_{\!\mbox{\tiny\it\texttt{f,eab}}}}
        \newcommand{\SStareab}{\mbox{\rm\texttt{SStar}}_{\!\mbox{\tiny\it\texttt{eab}}}}
        \newcommand{\SStarval}{\mbox{\rm\texttt{SStar}}_{\!\mbox{\tiny\it\texttt{val}}}}
   \newcommand{\stf} {\star{_{\!{_f}}}}  
  \newcommand{\stt} {\widetilde{\star}}  
    \newcommand{\stu} {\overline{\star}} 
     \newcommand{\sta} {\star_{\mbox{\it\tiny\texttt{a}}}}
\DeclareMathOperator{\Loc}{\mbox{\rm\texttt{Loc}}}
\newcommand{\overr}{{\mbox{\rm\texttt{Overr}}}}
\newcommand{\overric}{{\mbox{\rm\texttt{Overr}}}_{\!\mbox{\tiny\it\texttt{ic}}}} 
\newcommand{\overrloc}{{\mbox{\rm\texttt{Overr}}}_{\!\mbox{\tiny\it\texttt{loc}}}}
\newcommand{\overrflat}{{\mbox{\rm\texttt{Overr}}}_{\!\mbox{\tiny\it\texttt{flat}}}}
\newcommand{\spec}{{\mbox{\rm \texttt{Spec}}}}
\newcommand{\Zar}{{\texttt{Zar}}}
\newcommand{\qspec}{{\texttt{QSpec}}}
\newcommand{\qmax}{{\texttt{QMax}}}
\newcommand{\X}{{\mathbb{X}}}
  \newcommand{\locsist}{\mbox{\rm\texttt{LS}}}
    \newcommand{\locsistf}{\mbox{\rm\texttt{LS}}_{\mbox{\it\tiny\texttt{f}} }}
 \newcommand{\insA}{\mathbb A}
\newcommand{\Kr}{{\rm Kr}}
\newcommand{\Max}{{\rm Max}}
\newcommand{\ms}{\mathscr}
\newcommand{\mf}{\mathbf}
\newcommand{\xcal}{{\boldsymbol{\mathcal{X}}}}
\newcommand{\ucal}{{\boldsymbol{\mathcal{U}}}}
    \newcommand{\FF}{\boldsymbol{\overline{F}}}
                \newcommand{\F}{\boldsymbol{F}}
                \newcommand{\f}{\boldsymbol{f}}
    \DeclareMathOperator{\chius}{\mbox{\texttt{Cl}}}
\newtheoremstyle{mio}%
	{}{} 
	{\itshape}{} 
	{\bfseries}{.}{ } 
	{#1 #2\thmnote{\mdseries~(\scshape #3)}} 
\theoremstyle{mio}
\newtheorem{teor}{Theorem}[section]
\newtheorem{cor}[teor]{Corollary}
\newtheorem{prop}[teor]{Proposition}
\newtheorem{lemma}[teor]{Lemma}
\theoremstyle{definition}
\newtheorem{ex}[teor]{\textbf{Example}}
\newtheorem{oss}[teor]{Remark}
\DeclareMathOperator{\Cl}{\mbox{\texttt{Cl}}}
  \DeclareMathOperator{\calbW}{\boldsymbol{\mathcal W}}%
   \DeclareMathOperator{\calbV}{\boldsymbol{\mathcal V}}%
      \DeclareMathOperator{\calbY}{\boldsymbol{\mathcal Y}}%
\begin{document}     

\title[]
{New distinguished classes of spectral spaces:\\ a survey}
\let\thefootnote\relax\footnote{\hskip -10 pt C.A. Finocchiaro \ $\bullet$ \ M. Fontana \ $\bullet$ \ D. Spirito \\
Dipartimento di Matematica e Fisica, Universit\`a degli Studi ``Roma Tre'', 00146 Rome, Italy. \\
e-mail: \texttt{carmelo@mat.uniroma3.it;  fontana@mat.uniroma3.it; spirito@mat.uniroma3.it.} }

\let\thefootnote\relax\footnote{\hskip -10 pt   The authors gratefully acknowledge partial support from  \sl INdAM, Istituto Nazionale di Alta Matematica.}
\let\thefootnote\relax\footnote{\hskip -14 pt  MSC(2010): 13A15, 13G05, 13B10, 13E99, 13C11, 14A05.
} 
\let\thefootnote\relax\footnote{\hskip -14 pt   Keywords: Spectral space and spectral map; star and semistar operations; Zariski, constructible, inverse and ultrafilter topologies; Gabriel-Popescu localizing system; Riemann-Zariski space of valuation domains; Kronecker function ring.}
\author{Carmelo A. Finocchiaro,
Marco Fontana, and 
Dario Spirito}

\date{\today}

\begin{abstract}
{In the present survey paper, we  present several new classes of Hochster's spectral spaces ``occurring in nature'', actually  in multiplicative ideal theory,  and not linked to or realized in an explicit way by prime spectra of rings.  The general setting is the space of the semistar operations (of finite type), endowed with a Zariski-like topology, which turns out to be a natural topological extension of the space of the overrings of an integral domain, endowed with a topology introduced by Zariski. One of the key tool is a recent characterization of spectral spaces,  based on  the ultrafilter topology, given in \cite{Fi}.
 Several applications are also discussed.
}
 \footnotetext{\hskip -15 pt  $\mbox{Key words:}$ \it   \dots.}
\end{abstract}

\maketitle

\section{Introduction and preliminaries}

Let $X$ be a topological space. According to \cite{ho}, $X$ is called a {\it spectral space} if there exists a ring $R$ such that $\spec({R})$, with the Zariski topology, is homeomorphic to $X$. 
Spectral spaces  can be characterized in a purely topological way:   a topological space $X$ is spectral  if and only if $X$ is T$_0$ (this means that 
for every pair of distinct points of $X$, at least one of them has an open neighborhood not containing the other),
quasi-compact, admits a basis of quasi-compact open subspaces that is closed under finite intersections, and every irreducible closed  subspace $C$ of $X$ has a (unique) generic point (i.e., there exists one point $x_C\in C$ such that $C$ coincides with the closure of this point) \cite[Proposition 4]{ho}.

 In the present survey paper, we  present several new classes of spectral spaces occurring naturally in multiplicative ideal theory. 
Before doing this, we introduce, for  convenience of the reader, some background material.

\subsection{\bf Semistar operations}

 Let $D$ be an integral domain with quotient field $K$. Let  $\FF(D)$  [respectively, $\F(D)$; $\f(D)$] be the set of all nonzero $D$--submodules of $K$ [respectively, nonzero fractional ideals; nonzero finitely generated fractional ideals] of $D$ (thus, $\f(D) \subseteq \F(D) \subseteq \FF(D)$).

A \emph{semistar operation} on $D$ is a map $\star:\FF(D)\rightarrow \FF(D)$, $E\mapsto E^\star$, such that, for every $z\in K$, $z\neq 0$, and for every $E, F\in\FF(D)$, the following properties hold: $(\mathbf{\star_1})$  $E\subseteq E^\star$; $(\mathbf{\star_2})$ $E\subseteq F$ implies $E^\star\subseteq F^\star$; $(\mathbf{\star_3})$ $(E^\star)^\star=E^\star$; $(\mathbf{\star_4})$ $(zE)^\star=z\!\cdot\! E^\star$. If $D=D^\star$, then the map $\star|_{\F(D)}:\F(D)\rightarrow\F(D)$ is called a \emph{star operation} on $D$.

Semistar operations were introduced by Okabe and Matsuda in 1994 \cite{OM2} (although this kind of operations were considered by J. Huckaba in 1988, in the setting of rings with zero-divisors \cite[Section 20]{hu}), producing a more general and flexible concept than the earlier notion of a star operations which in turn were defined by Krull \cite{Krull:1935, Krull:1936, Krull} and used, among others, by Gilmer \cite[Section 32]{gi}.

 A star operation, in Krull's original terminology, was called ``prime operation'' ({\sl  Strich-Operation} or {\sl  $^\prime$-Operation}, in German  \cite{Krull:1935,Krull:1936}). The notion of semiprime operation and the relation with  that of semistar operation has been investigated in \cite{ep-15} (see also \cite{va}). Semiprime operations include various examples of specific closures, used mainly in the Noetherian setting, the most important of which is probably tight closure, originally defined in \cite{hochster}. (See \cite{ep-12} for a survey on closure operations.)

\subsection{\bf Riemann-Zariski spaces}

Let $K$ be a field and let $A$ be any subring of $K$. Let $\Zar(K|A)$ denote the set of all the valuation domains of $K$ that contain $A$ as a subring.  In the special case where $A:=D$ is an integral domain with quotient field $K$, we simply set
 $$
 \Zar(D):=\Zar(K|D) =\{V\mid V \mbox{ is a valuation domain overring of } D\}\,.
 $$

O. Zariski in  \cite{za} introduced a topological structure on the set $Z:=\Zar(K|A)$ by taking, as a basis for the open sets, the subsets $\texttt{B}_F:=\{V\in Z\mid V\supseteq    A[F]\}$, for $F$ varying in the fami\-ly 
 of all finite subsets of $K$  (see also \cite[Chapter VI, \S 17, page 110]{zs}).
 This topology is  called \emph{the Zariski topology on $Z$} and the set $Z$, equipped with this topology (denoted also by $Z^{\mbox{\tiny{\texttt{zar}}}}$), is usually called  \emph{the  Riemann-Zariski space of $K|A$} (sometimes also called abstract Riemann surface or generalized Riemann manifold of $K|A$). 
 
 In 1944, Zariski \cite{za} proved a general result that implies the quasi-compactness of $Z^{\mbox{\tiny{\texttt{zar}}}}$, and later it was proven that  $Z^{\mbox{\tiny{\texttt{zar}}}}$ is a spectral space, in the sense of M. Hochster \cite{ho} (for the case of  the space $\Zar(D)$ see \cite[Theorem 4.1]{dofefo-87}).
 More precisely, in  \cite[Theorem
 2]{dofo-86}  (respectively, in  \cite[Corollary 3.4]{fifolo2}) the authors provide explicitly a ring $R_D$ (respectively,
 $R_{K|A}$)  having the property that $\spec( R_D)$  (respectively, $\spec( R_{K|A})$)   is canonically homeomorphic
 to $\Zar(D)$ (respectively, to $\Zar(K|A)$), both endowed with the Zariski topology (see also \cite{hk}).

 Recently in \cite{FiSp} the Zariski topology on $\Zar(D)$ was explicitly extended on the larger space $\overr(D)$ of all overrings of $D$, by taking, as a basis of open sets the collection of the sets of the type   $\overr(D[F])$,  for $F$ varying in the fami\-ly 
 of all finite subsets of $K$ (see also \cite[page 115]{zs}). Clearly, in this way, $\Zar(D)$ becomes a subspace of $\overr(D)$.

\subsection{\bf The inverse topology on a spectral space.}

Let $X$ be a topological space and let $Y$ be any subset of $X$. We denote by $\Cl(Y)$ the closure of $Y$ in the topological space $X$.  Recall  that the topology on $X$ induces a natural preorder $\leq_X$ on $X$ (simply denoted by $\leq$,  if no confusion can arise), defined  by setting $x\leq_X y$ if $y\in\Cl(\{x\})$. It is straightforward that $\leq_X$ is a partial order if and only if $X$ is a T$_0$ space (e.g., this holds when $X$ is spectral). The set
$
Y^{{\mbox{\tiny{\texttt{gen}}}}}:=  \{x\in X \mid  y\in \chius(\{x\}),\mbox{ for some }y\in Y \}
$
is called \textit{closure under generizations of $Y$}. Similarly, using the opposite order,  the set
$
Y^{{\mbox{\tiny{\texttt{sp}}}}}:= \{x\in X \mid  x\in \chius(\{y\}),\mbox{ for some }y\in Y \}
$
is called \textit{closure under specia\-li\-zations of $Y$}. We say that $Y$ is \textit{closed under generi\-zations}  (respectively, \textit{closed under specia\-li\-zations})  if $Y= Y^{{\mbox{\tiny{\texttt{gen}}}}}$ (respectively, $Y=Y^{{\mbox{\tiny{\texttt{sp}}}}}$). 
For two elements $x, y$ in a spectral space $X$, we have: 
$$
x \leq y \quad \Leftrightarrow \quad \{x \}^{{\mbox{\tiny{\texttt{gen}}}}} \subseteq \{y \}^{{\mbox{\tiny{\texttt{gen}}}}} \quad \Leftrightarrow \quad
\{x \}^{{\mbox{\tiny{\texttt{sp}}}}} \supseteq \{y \}^{{\mbox{\tiny{\texttt{sp}}}}}\,.
$$
Suppose that $X$ is a spectral space; then,  $X$ can be endowed with another topology, introduced by Hochster \cite[Proposition 8]{ho}, whose basis of closed sets is the collection of all the 
quasi-compact open subspaces of $X$. 
This topology is called \emph{the inverse topology on $X$}. For a subset $Y$ of $X$, let $\Cl^{{\mbox{\tiny{\texttt{inv}}}}}(Y)$ 
be the closure of $Y$, in the inverse topology of $X$;   we denote by $X^{{\mbox{\tiny{\texttt{inv}}}}}$ the set $X$, equipped with the inverse topology. The name given to this new topology is due to the fact that,  given $x,y\in X$,    $x\in \Cl^{{\mbox{\tiny{\texttt{inv}}}}}(\{y\})$ if and only if $y\in \Cl(\{x\})$, i.e., the partial order induced by the inverse topology is the opposite order of the partial order  induced by the given spectral topology \cite[Proposition 8]{ho}.  

By definition, for any subset $Y$ of $X$, we have
 $$
 \chius^{{\mbox{\tiny{\texttt{inv}}}}}(Y) =
\bigcap \{U   \mid  \mbox{ $U$ open and quasi-compact in } X,  \; 
U  \supseteq Y \}\,.
$$ 
In particular, keeping in mind that the inverse topology reverses the order of the given spectral topology, it follows \cite[Proposition 8]{ho} that the closure under generizations $\{x\}^{{\mbox{\tiny{\texttt{gen}}}}}$ of a singleton is closed in the inverse topology  of $X$, since   $$\{x \}^{{\mbox{\tiny{\texttt{gen}}}}}=\Cl^{{\mbox{\tiny{\texttt{inv}}}}}(\{x\})=\bigcap\{U \mid  U \subseteq X\mbox{ quasi-compact and open},\, x\in U \}.$$ 
  On the other hand, it is trivial, by the definition,  that the closure under specializations of a singleton $\{x \}^{{\mbox{\tiny{\texttt{sp}}}}}$ is closed in the given topology of $X$, since $\{x \}^{{\mbox{\tiny{\texttt{sp}}}}}= \chius(\{x\})$.

\section{Ultrafilter topology and spectral spaces}

The characterization of spectral spaces given in \cite[Proposition 4]{ho}  is often not easy to handle. In particular, it might be arduous to verify that a space is spectral using direct arguments involving
irreducible closed subspaces.

 The main result of the present section (Theorem \ref{spec-charact}) provides a  criterion  for deciding when a topological space is spectral, based on the use of ultrafilters. To introduce this statement, we need some basic and preliminary results on various topological structures that can be considered on the prime spectrum of a ring.  

\smallskip

It is well known that the prime spectrum of a commutative ring endowed with the Zariski topology  is always T$_0$,   but almost never T$_2$ nor T$_1$ (it is T$_2$ or Hausdorff only in the zero-dimensional case, cf. for instance \cite[Th\'eor\`eme 1.3]{ma}). Thus, in the general case, it is natural to look for a Hausdorff topology $\mathcal T$ on $\spec({R})$  such that the following properties are satisfied at the same time:
\begin{itemize}
\item $\mathcal T$ is finer than the Zariski topology;
\item $(\spec({R}), \mathcal T)$ is compact (i.e., quasi-compact and T$_2$, using the terminology of \cite{EGA}). 
\end{itemize}

A classical answer to the previous question is given in \cite[(7.2.11)]{EGA}, even in the more general setting of the underlying topological space of a scheme, 
by considering  the \emph{constructible topology} (see \cite{ch},  \cite[Chapter 3,  Exercises 27, 28 and 30]{am}) or   the \emph{patch topology} \cite{ho}.

As in \cite{sch-tr}, we introduce  the \emph{constructible topology}  by a Kuratowski closure ope\-rator: if  $ X$ is a spectral space,   we set, for each subset $Y$  of $X$,
$$
\begin{array}{rl}
\chius^{\mbox{\tiny{\texttt{cons}}}}(Y) \! :=  \!\bigcap \{U \!\cup\! (X\!\setminus\! V) \mid  & \hskip -5pt \mbox{ $U$ and $V$ open and quasi-compact in }  X,  \\
 & \hskip -4pt U  \!\cup \! (X \!\setminus \! V) \supseteq Y \}\,.
 \end{array}
$$
We denote by $X^{\mbox{\tiny{\texttt{cons}}}}$ the set $X$, equipped with the constructible topology.
For Noetherian spectral spaces, the clopen subsets of the constructible topology are precisely the constructible subsets after C. Chevalley \cite{ch}, i.e. the finite unions of locally closed subspaces. 
It is straighforward that the constructible topology  is a refinement of the given topology (it is the coarsest topology on $X$ for which the quasi-compact open subspaces are clopen)
 and it  is always 
Hausdorff. 
Finally, by \cite[Remark 2.2]{fifolo2}, we have 
$\chius^{\mbox{\tiny{\texttt{inv}}}}(Y)=(\chius^{\mbox{\tiny{\texttt{cons}}}}(Y))^{\mbox{\tiny{\texttt{gen}}}}$. It follows that each closed set in the inverse topology is closed under generizations  and, from \cite[Proposition 2.6]{fifolo2},  that a quasi-compact subspace $Y$ of $X$ closed for generizations is inverse-closed.  
On the other hand, the closure of a subset $Y$ in the given topology of $X$, $\chius(Y)$, coincides with   
$(\chius^{\mbox{\tiny{\texttt{cons}}}}(Y))^{\mbox{\tiny{\texttt{sp}}}}$   \cite[Remark 2.2]{fifolo2}.

\medskip
 In the following result we collect some well known classical properties of $\spec({R})$, equipped with the constructible topology. 

\begin{teor}{\rm (cf. \cite[Chapter 3, Exercises 27, 28
and 30]{am}, \cite[Proposition 5]{fo-lo-2008}, \cite[Th\'eor\`eme 2.2]{ma}, \cite[Proposition 5]{oli1} and \cite{oli2})}
Let $R$ be a ring. We denote by ${\spec}({R})^{{\mbox{\rm\tiny{\texttt{zar}}}}}$ 
(respectively, $\spec({R})^{\mbox{\rm\tiny{\texttt{cons}}}}$) the set $\spec({R})$, endowed with the Zariski topology (respectively, the constructible topology). The following properties hold.
\begin{enumerate}
\item[\rm (1)] $\spec({R})^{\mbox{\rm\tiny{\texttt{cons}}}}$ is compact, Hausdorff and totally disconnected (and, by definition, the topology is finer than the Zariski topology).
\item[\rm (2)] $\spec({R})^{\mbox{\rm\tiny{\texttt{cons}}}}=\spec({R})^{\mbox{\rm\tiny{\texttt{zar}}}}$ if and only if $R$ is zero-dimensional. 
\item [\rm (3)]  Assume that $\spec({R})^{\mbox{\rm\tiny{\texttt{zar}}}}$ is a Noetherian space. Then, a subset of  \ $\spec({R})$ is clopen 
 in $\spec({R})^{\mbox{\rm\tiny{\texttt{cons}}}}$
  if and only if it is \emph{constructible}, according to Chevalley  (see \cite{ch-1955, ch} and \cite[(2.3.11) and (2.4.1)]{EGA}) (i.e., it is a finite union of locally closed subsets of $\spec({R})^{\mbox{\rm\tiny{\texttt{zar}}}}$). 
\item[\rm (4)] Let $\{\mathbb{X}_f\mid f\in R \}$ be a collection of algebraically independent indeterminates over $R$, let $I$ be the ideal of the polynomial ring $R[\{\mathbb{X}_f\mid f\in R \}]$ generated by the set $\{f^2\mathbb{X}_f-f; f\mathbb{X}^2_f  -\mathbb{X}_f \mid f\in R\}$, and consider the ring ${\rm T}({R}):=R[\{\mathbb{X}_f\mid f\in R \}]/I$. Then, the following statements hold.
\begin{enumerate}
\item[\rm (4.a)]  ${\rm T}({R})$ is absolutely flat (or, \emph{von Neumann regular}, i.e., for each $a\in {\rm T}({R})$ there exists $x\in {\rm T}({R})$ such that
$ax^2=a$), called the \emph{absolutely flat cover} of $R$.
\item[\rm (4.b)]  The canonical embedding $\iota: R\rightarrow {\rm T}({R})$ is an epimorphism in the category of rings. 
Furthermore, $\iota$ is an isomorphism if and only if $R$ is absolutely flat. 
\item[\rm (4.c)]  The canonical continuous map $\iota^a:\spec({\rm T}({R}))^{\mbox{\rm\tiny{\texttt{zar}}}}\rightarrow \spec({R})^{\mbox{\rm\tiny{\texttt{cons}}}}$,  induced by $\iota$, is an homeomorphism. In particular, the topological space
 $\spec({R})^{\mbox{\rm\tiny{\texttt{cons}}}}$ is spectral. 
\end{enumerate}
\end{enumerate}
\end{teor}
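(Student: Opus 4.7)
The plan is to prove (1) by a concrete embedding into a power of $\{0,1\}$, and to bootstrap the remaining parts from (1). Identify each prime $p \in \spec(R)$ with its characteristic function $\chi_p \in \{0,1\}^R$, where $\{0,1\}$ carries the discrete topology and the power carries the product topology. For each $f \in R$, the sets $\{p : f \notin p\}$ and $\{p : f \in p\}$ are respectively a quasi-compact Zariski-open and its Zariski-closed complement, and both are clopen in the constructible topology; they correspond under the embedding to the subbasic opens that fix the $f$-coordinate to $0$ or to $1$. Hence the subspace topology inherited from $\{0,1\}^R$ coincides with the constructible one. The image of $\spec(R)$ is cut out by closed conditions forcing $\chi_p$ to be the characteristic function of a prime ideal (one closed condition for each additive and multiplicative relation in $R$), so the image is closed in the compact, Hausdorff, totally disconnected space $\{0,1\}^R$ (by Tychonoff), proving (1).

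For (2), if the two topologies coincide then the Zariski topology is Hausdorff, so every prime is closed, every prime is maximal, and $\dim R = 0$. Conversely, when $\dim R = 0$, the quotient $R/\sqrt{0}$ is von Neumann regular and $\spec(R)$ is a Stone space in the Zariski topology, so every basic Zariski-open is already Zariski-clopen and the Zariski topology is as fine as the constructible one. For (3), under the Noetherian hypothesis every Zariski open is quasi-compact, so the Boolean algebra generated in the power set by the Zariski opens (the Chevalley constructible sets, by definition) coincides with the Boolean algebra generated by the quasi-compact Zariski opens (the clopens of $\spec(R)^{\mbox{\rm\tiny{\texttt{cons}}}}$).

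For (4), the defining relations of $T(R)$ directly yield (4.a): for each $f \in R$, the class of $\mathbb{X}_f$ is a quasi-inverse of the image of $f$, so $T(R)$ is absolutely flat. For (4.b), the quasi-inverse of an element, when it exists, is uniquely determined by the defining equations; this gives $\iota$ the universal property that makes it an epimorphism, and shows that $\iota$ is an isomorphism exactly when $R$ is already absolutely flat. The main obstacle is (4.c). I would first establish that $\iota^a$ is a bijection: injectivity follows from $\iota$ being an epimorphism, and surjectivity from the fact that each prime $p$ of $R$ extends to a prime of $T(R)$, uniquely because the value of each $\mathbb{X}_f$ modulo the extension is forced by the defining relations together with whether $f \in p$. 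Continuity of $\iota^a$ from $\spec(T(R))^{\mbox{\rm\tiny{\texttt{zar}}}}$ to $\spec(R)^{\mbox{\rm\tiny{\texttt{cons}}}}$ holds because the pull-back of any constructible subbasic open of $\spec(R)$ is Zariski-clopen in $\spec(T(R))$, using absolute flatness of $T(R)$. Moreover $\spec(T(R))^{\mbox{\rm\tiny{\texttt{zar}}}}$ is compact, since by (2) it coincides with $\spec(T(R))^{\mbox{\rm\tiny{\texttt{cons}}}}$, which is compact by (1). Hence $\iota^a$ is a continuous bijection from a compact space onto a Hausdorff space, and therefore a homeomorphism; the spectral structure on $\spec(R)^{\mbox{\rm\tiny{\texttt{cons}}}}$ follows.
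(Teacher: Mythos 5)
The paper gives no proof of this theorem; it is stated as a collection of classical facts with citations (Atiyah--Macdonald, Maroscia, Olivier, Fontana--Loper), so your argument stands on its own. Parts (1)--(3) are essentially the standard arguments and are fine: the embedding into $\{0,1\}^R$ with the image cut out by finitary clopen conditions gives (1); Hausdorffness forces every prime to be maximal, and conversely in the zero-dimensional case each $\texttt{D}(f)$ is Zariski-clopen (via von Neumann regularity of $R/\sqrt{0}$), giving (2); for (3) you should add one sentence justifying the parenthetical identification of the clopen sets of $\spec({R})^{\mbox{\rm\tiny{\texttt{cons}}}}$ with the Boolean algebra generated by the quasi-compact opens (a constructible-clopen set is covered by basic patch-open sets and is compact by (1), hence is a finite union of them), but that is a minor compression.

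The genuine gap is in (4.a), and it propagates. From the relations $f^2\mathbb{X}_f=f$ and $f\mathbb{X}_f^2=\mathbb{X}_f$ you conclude at once that ${\rm T}(R)$ is absolutely flat, but those relations only say that the images of the elements of $R$ (and the classes of the $\mathbb{X}_f$) admit quasi-inverses in ${\rm T}(R)$. Absolute flatness requires a quasi-inverse for \emph{every} element of ${\rm T}(R)$, e.g.\ for sums like $r+\mathbb{X}_f$ or $\mathbb{X}_f+\mathbb{X}_g$, and the set of von Neumann regular elements of a commutative ring is not closed under addition: in $\mathbb{Z}\times\mathbb{Z}$ the elements $(1,0)$ and $(1,1)$ are regular while their sum $(2,1)$ is not. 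So ``the generators are regular'' does not yield (4.a); this is exactly the nontrivial content of Olivier's theorem. A correct route is, for instance: compute ${\rm T}(R)\otimes_R\kappa(p)\cong\kappa(p)$ for each prime $p$ (the relations force $\mathbb{X}_f\mapsto 0$ if $f\in p$ and $\mathbb{X}_f\mapsto \bar{f}^{-1}$ otherwise), and use the idempotents $e_f:=f\mathbb{X}_f$ to show that ${\rm T}(R)_Q$ is a field for every prime $Q$ of ${\rm T}(R)$ (if $f\in Q\cap R$ then $e_f\in Q$, so $1-e_f$ is a unit of ${\rm T}(R)_Q$ annihilating $f$); hence ${\rm T}(R)$ is reduced and zero-dimensional, i.e.\ absolutely flat. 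Note that your (4.c) (continuity via clopenness of the pullbacks $\texttt{D}(\iota(f))$, and compactness of $\spec({\rm T}(R))$ via (2)) and the converse half of (4.b) ($\iota$ an isomorphism $\Rightarrow$ $R$ absolutely flat) both invoke (4.a), so this missing argument is load-bearing; once it is supplied, the rest of your (4.b)--(4.c) (uniqueness of quasi-inverses $\Rightarrow$ epimorphism $\Rightarrow$ injectivity on spectra, the fibre computation for surjectivity, compact-to-Hausdorff bijection) goes through.
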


In \cite{fo-lo-2008} a new description of $\spec({R})^{{\mbox{\tiny{\texttt{cons}}}}}$ is presented, by using a new tool: {\sl convergence by ultrafilters.} 

For the reader's convenience, we recall now some basic facts about ultrafilters (for further properties see, for example, \cite{je}). Let $\mf X$ be a nonempty set. A nonempty collection $\ms U$ of nonempty subsets of $\mf X$ is called \emph{an ultrafilter on $\mf X$} if the following axioms hold:
\begin{itemize}
\item If $Y, Z\in \ms U$, then $Y\cap Z\in \ms U$.
\item If $Y\in \ms U$ and $Y\subseteq Z\subseteq \mf X$, then $Z\in\ms U$. 
\item If $Y\subseteq \mf X$ then either $Y\in \ms U$ or $\mf X \setminus Y\in\ms U$. 
\end{itemize}

It is easy to see that, for each $x\in \mf X$, the collection $\ms U_x:=\{Y\subseteq \mf X \mid x\in Y \}$ is an ultrafilter on $\mf X$, called \emph{the trivial (or principal) ultrafilter generated by $x$}. Every finite set admits only trivial ultrafilters. The existence of nontrivial ultrafilters on infinite sets is guaranteed by the Axiom of Choice. Precisely, it is proved under ZFC that, if $\mathcal F$ is a nonempty collection of subsets of $\mf X$ with the finite intersection property, then there exists an ultrafilter $\ms U$ on $\mf X$ such that $\mathcal F\subseteq \ms U$. 

\smallskip

Now, let $R$ be a ring, let $Y$ be a nonempty subset of $\spec({R})$ and let $\ms U$ be an ultrafilter on $Y$. 
For each $f \in R$ we set $\texttt{V}(f) := \{ P \in \spec({R}) \mid f \in P\}$.
It is easy to show that the set $P_{Y,{\ms U}}:= P_{\ms U}:= \{f\in R\mid \texttt{V}(f)\cap Y\in \ms U\}$ is a prime ideal of $R$ 
 \cite[Lemma 2.4]{calota}, called \textit{the ultrafilter limit point of $Y$, with respect to $\ms U$.} 
According to \cite[Definition 1]{fo-lo-2008}, a  nonempty subset $Y$ of $\spec({R})$ is {\it ultrafilter closed} if, for any ultrafilter $\ms U$ on $Y$, we have $P_{\ms U}\in Y$.   We assume that the empty set is ultrafilter closed. The following result relates the constructible topology and the convergence   by  ultrafilters.

\begin{teor}\label{ultracons}{\rm(cf. \cite[Theorem 8]{fo-lo-2008})}
Let $R$ be a ring and let $Y\subseteq \spec({R})$. Then,  the following conditions are equivalent.
\begin{enumerate}
\item[\rm (i)] $Y$ is closed, with respect to the constructible topology.
\item[\rm (ii)] $Y$ is ultrafilter closed. 
\end{enumerate}
\end{teor}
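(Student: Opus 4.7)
The plan is to prove the two implications by exploiting the description of quasi-compact open subsets of $\spec({R})$ and the basic properties of ultrafilters. Recall that in $\spec({R})$ every quasi-compact open set has the form $\spec({R})\setminus \texttt{V}(I)$ for some finitely generated ideal $I$ of $R$; equivalently, it is a finite union of basic opens $D(f):=\spec({R})\setminus \texttt{V}(f)$. I will use this reformulation throughout, together with the explicit description of $\chius^{\mbox{\tiny{\texttt{cons}}}}(Y)$ given before the statement.

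First I would establish (i)$\Rightarrow$(ii). Assume $Y$ is closed in the constructible topology, fix an ultrafilter $\ms U$ on $Y$, and aim to show $P_{\ms U}\in Y$. It suffices to prove that $P_{\ms U}$ lies in every set of the form $U\cup(\spec({R})\setminus V)$ with $U,V$ open and quasi-compact that contains $Y$. Writing $U=\bigcup_{j=1}^{n}D(g_j)$ and $\spec({R})\setminus V=\texttt{V}(I)$ for a finitely generated ideal $I=(f_1,\dots,f_m)$, the containment $Y\subseteq U\cup \texttt{V}(I)$ gives
\[
Y = (\texttt{V}(I)\cap Y)\cup \bigcup_{j=1}^n (D(g_j)\cap Y)\in\ms U.
\]
Since $\ms U$ is an ultrafilter, a finite union belongs to $\ms U$ only if some member does. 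Either $\texttt{V}(I)\cap Y\in\ms U$, whence $f_1,\ldots,f_m\in P_{\ms U}$, so $P_{\ms U}\in \texttt{V}(I)$; or some $D(g_j)\cap Y\in\ms U$, whence $g_j\notin P_{\ms U}$, so $P_{\ms U}\in D(g_j)\subseteq U$. In either case $P_{\ms U}\in U\cup (\spec({R})\setminus V)$, and thus $P_{\ms U}\in\chius^{\mbox{\tiny{\texttt{cons}}}}(Y)=Y$.

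For (ii)$\Rightarrow$(i), I would fix $P\in\chius^{\mbox{\tiny{\texttt{cons}}}}(Y)$ and produce an ultrafilter $\ms U$ on $Y$ with $P_{\ms U}=P$. Consider
\[
\mathcal F:=\{\texttt{V}(f)\cap Y\mid f\in P\}\cup\{D(g)\cap Y\mid g\notin P\}.
\]
The key step is verifying that $\mathcal F$ has the finite intersection property. A generic finite intersection is $\texttt{V}(I)\cap D(g)\cap Y$ with $I=(f_1,\dots,f_m)\subseteq P$ finitely generated and $g:=g_1\cdots g_n\notin P$. Take $U':=\spec({R})\setminus \texttt{V}(I)$ and $V':=D(g)$: both are open and quasi-compact, $P\notin U'$, $P\in V'$, hence $P\notin U'\cup(\spec({R})\setminus V')$. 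Since $P\in\chius^{\mbox{\tiny{\texttt{cons}}}}(Y)$, necessarily $Y\not\subseteq U'\cup(\spec({R})\setminus V')$, i.e.\ $Y\cap V'\cap\texttt{V}(I)\neq\emptyset$, which is exactly the required intersection.

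By the Axiom of Choice, extend $\mathcal F$ to an ultrafilter $\ms U$ on $Y$. It remains to check $P_{\ms U}=P$: for $f\in P$ the set $\texttt{V}(f)\cap Y$ is in $\mathcal F\subseteq\ms U$, so $f\in P_{\ms U}$; for $f\notin P$ the set $D(g)\cap Y$ with $g=f$ is in $\ms U$, and $(D(f)\cap Y)\cap(\texttt{V}(f)\cap Y)=\emptyset$ forces $\texttt{V}(f)\cap Y\notin\ms U$, so $f\notin P_{\ms U}$. Applying ultrafilter-closedness of $Y$ yields $P=P_{\ms U}\in Y$, completing the inclusion $\chius^{\mbox{\tiny{\texttt{cons}}}}(Y)\subseteq Y$. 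The main technical obstacle is precisely the verification of the finite intersection property for $\mathcal F$; once one notices that the ideal $I$ and the product $g_1\cdots g_n$ produce a single pair $(U',V')$ of quasi-compact opens whose associated constructible closed set avoids $P$, everything else is bookkeeping on ultrafilters.
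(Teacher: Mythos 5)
Your proof is correct and complete: the implication (i)$\Rightarrow$(ii) by testing the ultrafilter limit point against each subbasic patch-closed set $U\cup(\spec(R)\setminus V)$ with $U,V$ quasi-compact open, and the converse by building a family with the finite intersection property from $\{\texttt{V}(f)\cap Y \mid f\in P\}\cup\{D(g)\cap Y\mid g\notin P\}$ and extending it to an ultrafilter, is exactly the standard argument. Note that the survey itself gives no proof (it only cites \cite[Theorem 8]{fo-lo-2008}), and your direct verification is in the same spirit as that source; the only points left tacit — the degenerate intersections involving only one of the two families, the empty-set convention, and the extensivity $Y\subseteq\chius^{cons}(Y)$ — are immediate.
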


In \cite[Section 2]{Fi}, the convergence   by  ultrafilters, presented in \cite{fo-lo-2008}, is extended in a more general setting. 
Precisely, let $\mf X$ be a nonempty set and $\mathcal F$ be a nonempty collection of subsets of $\mf X$. If  $Y$ is a nonempty subset of $\mf X$ and $\ms U$ is an ultrafilter on $Y$, we define
$$
Y_{\mathcal F}(\ms U):=\{x\in\mf X\mid[\ \forall F\in \mathcal F, x\in F\iff F\cap Y\in\ms U \ ] \}
$$
and call it \emph{the $\mathcal F$-ultrafilter limit set of $Y$, with respect to $\ms U$}. 
\begin{ex}\label{ultra-limit-point}(cf. \cite[Example 2.1(2)]{Fi})
Let $R$ be a ring, let ${\boldsymbol{\mathcal P}}$ denote the collection of the principal open subset of $\spec({R})$, i.e.,
$$
{\boldsymbol{\mathcal P}}:=\{\texttt{D}(f):=\{P \in \spec({R})\mid f\notin P \} \mid f\in R\}\,.
$$
If $\ms U$ is an ultrafilter on a subset $Y$ of $\spec({R})$, then  $Y_{\boldsymbol{\mathcal P}}(\ms U)=  \{P_{\ms U}\}$, where $P_{\ms U} $ denotes,  as before, the ultrafilter limit point of $Y$, with respect to $\ms U$.
\end{ex}

\begin{ex} 
Let $K$ be a field and let $A$ be any subring of $K$. In the space $\Zar(K|A)$, let 
$$
\boldsymbol{\mathcal B}:=\{\texttt{B}_F:= \Zar(K|A[F])\mid F\subseteq K, \ F \mbox{ finite} \}, 
$$
denote the standard basis for the open sets for the Zariski topology on  $\Zar(K|A)$.
If $Z$ is a nonempty subset of $\Zar(K|D)$ and $\ms U$ is an ultrafilter on $Z$, it is easy to show that the subset 
$$
Z_{\ms U}:=\{x\in K\mid \Zar(K|A[x])\cap Z\in \ms U \}
$$
is still a valuation domain of $K$ (cf. \cite[Lemma 2.9]{calota} and \cite[Proposition 3.1]{fifolo1}), called \emph{the ultrafilter limit point of $Z$, with respect to $\ms U$.} Then we have $Z_{\boldsymbol{\mathcal B}}(\ms U)=\{Z_{\ms U} \}$. 
\end{ex}


The next goal is to extend the notion of ultrafilter closure given for the prime spectrum of a ring in a general setting. 

Let $\mf X$ be a nonempty set, $\mathcal F$   a nonempty collection of subsets of $\mf X$, and fix a nonempty subset $Y$ of $\mf X$. We say that that $Y$ is {\it $\mathcal F$-stable under ultrafilters} if, for any ultrafilter $\ms U$ on $Y$, we have $Y_{\mathcal F}(\ms U)\subseteq Y$. 

Let ${\boldsymbol{\mathcal P}}$ be as in Example \ref{ultra-limit-point}. It is easily seen that a subset of the prime spectrum of a ring is 
${\boldsymbol{\mathcal P}}$-stable under ultrafilters if and only if it is ultrafilter closed, that is, it is closed in the constructible topology (by Theorem \ref{ultracons}).

\begin{prop}\label{basic-ultra}{\rm (cf. \cite[Propositions 2.6, 2.11, 2.13 and Theorem 2.14]{Fi})}
Let $\mf X$ be a nonempty set, $\mathcal F$ be a nonempty collection of subsets of $\mf X$. Then, the following properties hold. 
\begin{enumerate}
\item[\rm(1)] The collection of all the subsets of $\mf X$ that are stable under ultrafilters is the family of the closed sets for a topology on $\mf X$,
 called the \emph{$\mathcal F$-ultrafilter topology}. We will denote by ${\mf X}^{\mbox{\rm\tiny $\mathcal{F}$\texttt{-ultra}}}$ the set 
 $\mf X$, equipped with the $\mathcal F$-ultrafilter topo\-logy. 
\item[\rm(2)]  If $\mathcal B$ is the Boolean subalgebra of the power set of $\mf X$ generated by $\mathcal F$, then $\mathcal B$ is a collection of clopen subsets of ${\mf X}^{\mbox{\rm\tiny $\mathcal{F}$\texttt{-ultra}}}$.
\item[\rm(3)]  For each subset $Y$ of $\mf X$, the closure of $Y$ in ${\mf X}^{\mbox{\rm\tiny $\mathcal{F}$\texttt{-ultra}}}$ is the set 
$$
\bigcup\{Y_{\mathcal F}(\ms U)\mid \ms U\mbox{ ultrafilter on }Y \}.
$$
\item[\rm(4)]  The following conditions are equivalent.
\begin{enumerate}
\item[\rm(i)]  ${\mf X}^{\mbox{\rm\tiny $\mathcal{F}$\texttt{-ultra}}}$ is quasi-compact.
\item[\rm(ii)]  For any ultrafilter $\ms U$ on $\mf X$, the ultrafilter limit set ${\mf X}_{\mathcal F}(\ms U)$ is nonempty. 
\end{enumerate}
\end{enumerate}
\end{prop}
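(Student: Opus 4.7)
The plan is to work through the four parts using two elementary observations: if $Y \subseteq Z \subseteq \mf X$ and $\ms U$ is an ultrafilter on $Y$, then $\ms U' := \{A \subseteq Z \mid A \cap Y \in \ms U\}$ is an ultrafilter on $Z$ with $Z_{\mathcal F}(\ms U') = Y_{\mathcal F}(\ms U)$; and, dually, if $\ms U$ is an ultrafilter on $Y = Y_1 \cup Y_2$ with $Y_1 \in \ms U$, the trace $\ms U \cap \mathcal P(Y_1)$ is an ultrafilter on $Y_1$ with the same $\mathcal F$-limit set. Stability of $\emptyset$ and $\mf X$ is trivial, and these two observations give closure of the stable sets under arbitrary intersections and finite unions, proving (1). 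For (2), since the clopen subsets always form a Boolean subalgebra, it suffices to see that each $F \in \mathcal F$ is both closed and open: testing the defining condition of $F_{\mathcal F}(\ms U)$ with the distinguished element $F$ itself forces every point of $F_{\mathcal F}(\ms U)$ into $F$, and the analogous test on $(\mf X \setminus F)_{\mathcal F}(\ms U)$ (using that $F \cap (\mf X \setminus F) = \emptyset \notin \ms U$) keeps its points outside $F$.

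For (3), set $\overline{Y}^\ast := \bigcup\{Y_{\mathcal F}(\ms U) \mid \ms U \text{ ultrafilter on } Y\}$. The inclusion $Y \subseteq \overline{Y}^\ast$ is witnessed by principal ultrafilters, and the minimality of $\overline{Y}^\ast$ among stable supersets of $Y$ follows from the pushforward observation above. What remains, and is the main obstacle, is to show that $\overline{Y}^\ast$ is itself stable. Given an ultrafilter $\ms V$ on $\overline{Y}^\ast$ and a point $x \in (\overline{Y}^\ast)_{\mathcal F}(\ms V)$, I would produce an ultrafilter $\ms U$ on $Y$ with $x \in Y_{\mathcal F}(\ms U)$ by verifying the finite intersection property of the family
$$
\mathcal C := \{F \cap Y \mid F \in \mathcal F,\, F \cap \overline{Y}^\ast \in \ms V\} \cup \{Y \setminus F \mid F \in \mathcal F,\, F \cap \overline{Y}^\ast \notin \ms V\}.
$$
The point is that any corresponding finite intersection inside $\overline{Y}^\ast$ lies in $\ms V$, hence is nonempty, and a point $z$ in it belongs to $Y_{\mathcal F}(\ms U_z)$ for some ultrafilter $\ms U_z$ on $Y$; matching the ``in'' and ``out'' conditions then translates the existence of $z$ into the analogous intersection being an element of $\ms U_z$, and therefore nonempty in $Y$. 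Any ultrafilter on $Y$ extending $\mathcal C$ is the required $\ms U$, and the bookkeeping in distinguishing the two kinds of condition is where I expect to have to be careful.

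For (4), the direction (i)$\Rightarrow$(ii) follows by applying quasi-compactness to the closed family $\{F \mid F \in \mathcal F \cap \ms U\} \cup \{\mf X \setminus F \mid F \in \mathcal F \setminus \ms U\}$, which is clopen by (2) and has the finite intersection property because its finite intersections belong to $\ms U$; any common point then lies in $\mf X_{\mathcal F}(\ms U)$. Conversely, given a family $\mathcal Z$ of stable closed sets with FIP, I would extend it by Zorn to an ultrafilter $\ms U$ on $\mf X$ containing every $Z \in \mathcal Z$; the hypothesis yields $x \in \mf X_{\mathcal F}(\ms U)$, and for each $Z \in \mathcal Z$ the trace $\ms U \cap \mathcal P(Z)$ is an ultrafilter on $Z$ whose $\mathcal F$-limit set also contains $x$, so stability of $Z$ forces $x \in Z$, and $\bigcap \mathcal Z \neq \emptyset$.
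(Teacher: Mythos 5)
Your proposal is correct, and all four parts go through: the two pushforward/trace observations do give (1), the test-with-$F$-itself argument gives (2), and in (3) the crucial step (stability of $\overline{Y}^\ast$) is handled properly --- the finite intersection property of your family $\mathcal C$ follows exactly as you indicate, since a finite subfamily corresponds to a set $\bigcap_i(F_i\cap\overline{Y}^\ast)\cap\bigcap_j(\overline{Y}^\ast\setminus G_j)\in\ms V$, any point $w$ of which lies in some $Y_{\mathcal F}(\ms U_w)$, and the membership conditions defining $Y_{\mathcal F}(\ms U_w)$ translate precisely into $\bigcap_i(F_i\cap Y)\cap\bigcap_j(Y\setminus G_j)\in\ms U_w\neq\emptyset$; the argument for (4) via clopen sets, FIP and the ultrafilter extension lemma is likewise complete. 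Note that the survey itself gives no proof of this proposition, only the citation to \cite[Propositions 2.6, 2.11, 2.13 and Theorem 2.14]{Fi}; compared with that source, your route is essentially the standard one, the only organizational difference being that in \cite{Fi} the topology is introduced via the Kuratowski closure operator $Y\mapsto\bigcup\{Y_{\mathcal F}(\ms U)\mid\ms U \mbox{ ultrafilter on } Y\}$ (so your statements (1) and (3) are established simultaneously, idempotence of the closure being the analogue of your FIP step), whereas you first exhibit the stable sets as the closed sets of a topology and then identify the closure. Both orderings carry the same mathematical content, and your version has the mild advantage of making the minimality of $\overline{Y}^\ast$ among stable supersets of $Y$ completely transparent.
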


\begin{ex}(cf. \cite[Remark 2.7]{Fi}) Let ${\mf X}$ be a nonempty set.
\begin{enumerate}
\item If $\mathcal B({\mf X})$ denotes the power set of ${\mf X}$, the $\mathcal B({\mf X})$-ultrafilter topology is the discrete topology. 
\item The $\{{\mf X}\}$-ultrafilter topology is the chaotic topology (i.e., the open sets are just $X$ and $\emptyset$). 
\item Let $R$ be a ring, ${\mf X}:=\spec({R})$ and $\boldsymbol{\mathcal P}$ be as in Example \ref{ultra-limit-point}. Then, the $\boldsymbol{\mathcal P}$-ultrafilter topology is the constructible topology on ${\mf X}$ by \cite[Corollary 2.17]{Fi}. 
\end{enumerate}
\end{ex}

We apply  the previous construction when  the given set  is a topological space and the collection of subsets $\mathcal F$ is a basis for the topology. 

\begin{prop}\label{spec-ultra}{\rm (cf. \cite[Proposition 3.1]{Fi})}
Let $(X,\mathcal T)$ be a nonempty topological space and $\boldsymbol{\mathcal B}$ be a basis of open sets of $X$. Then, the following statements hold.
\begin{enumerate}[\rm(1)]
\item The $\boldsymbol{\mathcal B}$-ultrafilter topology is finer than or equal to the topology $\mathcal T$. 
\item If $(X,\mathcal T)$ is a \emph{T}$_0$ space, then $X^{\mbox{\rm\tiny $\boldsymbol{\mathcal{B}}$\texttt{-ultra}}}$ is a Hausdorff and totally disconnected space. 
\item Assume now that $(X,\mathcal T)$ is \emph{T}$_0$ and that
 $X^{\mbox{\rm\tiny $\boldsymbol{\mathcal{B}}$\texttt{-ultra}}}$ is  compact. 
 Then, the $\boldsymbol{\mathcal B}$-ultrafilter topology is the coarsest topology for which $\boldsymbol{\mathcal B}$ is a family of clopen sets. Moreover, $(X,\mathcal T)$ is a spectral space and the constructible topology on $(X,\mathcal T)$ is precisely the $\boldsymbol{\mathcal B}$-ultrafilter topology.
\end{enumerate}
\end{prop}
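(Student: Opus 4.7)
The plan is to prove the three statements in order, leaning on Proposition~\ref{basic-ultra} as the essential toolkit.

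For (1), Proposition~\ref{basic-ultra}(2) applied with $\mathcal{F}=\boldsymbol{\mathcal{B}}$ says that every $B\in\boldsymbol{\mathcal{B}}$ is clopen, hence open, in $X^{\mbox{\rm\tiny $\boldsymbol{\mathcal{B}}$\texttt{-ultra}}}$. Since $\boldsymbol{\mathcal{B}}$ is a basis for $\mathcal{T}$, every $\mathcal{T}$-open set is a union of $\boldsymbol{\mathcal{B}}$-ultrafilter-open sets, which gives the inclusion.

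For (2), given $x\neq y$ in $X$, the T$_0$ hypothesis combined with $\boldsymbol{\mathcal{B}}$ being a basis yields some $B\in\boldsymbol{\mathcal{B}}$ containing exactly one of $x$, $y$. By Proposition~\ref{basic-ultra}(2), $B$ is clopen in $X^{\mbox{\rm\tiny $\boldsymbol{\mathcal{B}}$\texttt{-ultra}}}$, so $\{B,X\setminus B\}$ is a clopen partition separating the two points. This gives both Hausdorffness and total disconnectedness in one stroke.

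For (3), I would split the statement into three sub-claims. First, to see that the $\boldsymbol{\mathcal{B}}$-ultrafilter topology is the coarsest one making $\boldsymbol{\mathcal{B}}$ clopen, let $\mathcal{T}^{\star}$ be the topology generated by $\boldsymbol{\mathcal{B}}\cup\{X\setminus B\mid B\in\boldsymbol{\mathcal{B}}\}$; by Proposition~\ref{basic-ultra}(2), $\mathcal{T}^{\star}$ is coarser than the $\boldsymbol{\mathcal{B}}$-ultrafilter topology, and repeating the argument of (2) shows $(X,\mathcal{T}^{\star})$ is Hausdorff. The identity from the compact space $X^{\mbox{\rm\tiny $\boldsymbol{\mathcal{B}}$\texttt{-ultra}}}$ to the Hausdorff space $(X,\mathcal{T}^{\star})$ is then a continuous bijection, hence a homeomorphism, forcing equality. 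Second, for spectrality, I would verify Hochster's criterion: T$_0$ is given; $\mathcal{T}$-quasi-compactness is inherited from the (finer) compact ultrafilter topology; and replacing $\boldsymbol{\mathcal{B}}$ by $\boldsymbol{\mathcal{B}}'$, the collection of all finite intersections of its elements, produces a basis of $\mathcal{T}$ closed under finite intersections, whose members are finite intersections of clopens in the compact Hausdorff space $X^{\mbox{\rm\tiny $\boldsymbol{\mathcal{B}}$\texttt{-ultra}}}$, hence closed there, hence compact there, and therefore $\mathcal{T}$-quasi-compact. Third, the constructible-equals-ultrafilter step: any $\mathcal{T}$-quasi-compact open is a finite union of elements of $\boldsymbol{\mathcal{B}}$, therefore clopen in $X^{\mbox{\rm\tiny $\boldsymbol{\mathcal{B}}$\texttt{-ultra}}}$, so the constructible topology is coarser than the $\boldsymbol{\mathcal{B}}$-ultrafilter one; both being compact Hausdorff (the former by Hochster applied to the now-established spectral space, the latter by hypothesis and (2)), the identity map is again a homeomorphism.

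The main obstacle is the generic-point clause of Hochster's criterion. Given $C$ irreducible closed in $\mathcal{T}$, irreducibility makes the family $\{B\cap C\mid B\in\boldsymbol{\mathcal{B}},\ B\cap C\neq\emptyset\}$ have the finite intersection property on $C$, so it extends to an ultrafilter $\ms{U}$ on $C$. Pushing forward, $\ms{U}':=\{Y\subseteq X\mid Y\cap C\in\ms{U}\}$ is an ultrafilter on $X$ with $B\in\ms{U}'\iff B\cap C\in\ms{U}$ for $B\in\boldsymbol{\mathcal{B}}$. Compactness of $X^{\mbox{\rm\tiny $\boldsymbol{\mathcal{B}}$\texttt{-ultra}}}$ and Proposition~\ref{basic-ultra}(4) furnish a point $x\in X_{\boldsymbol{\mathcal{B}}}(\ms{U}')$; by construction $x\in B\iff B\cap C\neq\emptyset$ for every $B\in\boldsymbol{\mathcal{B}}$, which means exactly $\chius(\{x\})=C$. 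Uniqueness is immediate from T$_0$. The delicate point here is recognising that the irreducibility of $C$ is precisely what allows the ultrafilter to be built and that the limit-set definition translates directly into the characterisation of a generic point, so no further machinery is needed.
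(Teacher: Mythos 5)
Your proof is correct, and all three parts are handled soundly: (1) and (2) are exactly the intended consequences of Proposition \ref{basic-ultra}(2), and in (3) the compact-to-Hausdorff identity-map argument correctly identifies the ultrafilter topology with the topology generated by $\boldsymbol{\mathcal B}$ and its complements and with the constructible topology, while finite intersections of members of $\boldsymbol{\mathcal B}$ do give a basis of quasi-compact open sets closed under intersection. Note, however, that this survey gives no proof at all (the statement is quoted from \cite[Proposition 3.1]{Fi}), and the route used there for part (3) is different from yours at the spectrality step: once the ultrafilter topology is known to be the compact, Hausdorff, totally disconnected topology in which the members of $\boldsymbol{\mathcal B}$ are clopen, one invokes Hochster's Corollary to Proposition 7 of \cite{ho}, which directly asserts that the coarser topology generated by such a family of clopen sets is spectral with the given compact topology as its patch (constructible) topology. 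You instead verify Hochster's Proposition 4 criterion by hand; the only delicate point is sobriety, and your treatment of it is correct and rather elegant: the trace family $\{B\cap C\mid B\in\boldsymbol{\mathcal B},\ B\cap C\neq\emptyset\}$ of an irreducible closed set $C$ has the finite intersection property, extends to an ultrafilter, is pushed forward to an ultrafilter on $X$, and quasi-compactness via Proposition \ref{basic-ultra}(4) produces a limit point $x$ satisfying $x\in B\iff B\cap C\neq\emptyset$ for all $B\in\boldsymbol{\mathcal B}$, which indeed forces $\chius(\{x\})=C$ (and T$_0$ gives uniqueness). The trade-off is clear: your argument is longer but self-contained modulo Proposition \ref{basic-ultra} and exhibits the generic point explicitly as an ultrafilter limit, whereas the cited route is shorter at the cost of appealing to Hochster's structure theorem relating boolean topologies and spectral topologies.
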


 Note that part (3) of the previous proposition generalizes  \cite[Theorem 8]{fo-lo-2008} and  \cite[Theorem 3.4]{fifolo1}.

By using Propositions \ref{basic-ultra}(4), \ref{spec-ultra}(3) and keeping in mind \cite[Corollary to Proposition 7]{ho}, we can 
deduce new characterizations of spectral spaces and hence new criteria, based on ultrafilters, to decide if a given topological space is spectral.

\begin{teor}\label{spec-charact}{\rm (cf. \cite[Corollary 3.3]{Fi})}
For a nonempty topological space $X$, the following conditions are equivalent. 
\begin{enumerate}[\rm (i)]
\item $X$ is a spectral space. 
\item There exists a basis $\boldsymbol{\mathcal B}$  for the open sets of $X$ such that $X^{\mbox{\rm\tiny $\boldsymbol{\mathcal{B}}$\texttt{-ultra}}}$ is a compact and Hausdorff space. 
\item $X$ is a \emph{T}$_0$ space and there is a basis $\boldsymbol{\mathcal B}$ for the open sets of  $X$ such that, for any ultrafilter $\ms U$ on $X$, the ultrafilter limit set  $X_{\boldsymbol{\mathcal B}}(\ms U)$ is nonempty. 
\item $X$ is a \emph{T}$_0$ space and there is a subbasis $\boldsymbol{\mathcal S}$ for the open sets of $X$ such that, for any ultrafilter $\ms U$ on $X$, the ultrafilter limit set  $X_{\boldsymbol{\mathcal S}}(\ms U)$ is nonempty. 
\end{enumerate}
\end{teor}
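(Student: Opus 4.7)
The plan is to establish the cycle of implications (i) $\Rightarrow$ (ii) $\Rightarrow$ (iii) $\Rightarrow$ (iv) $\Rightarrow$ (i), leaning throughout on Propositions \ref{basic-ultra} and \ref{spec-ultra}.

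For (i) $\Rightarrow$ (ii), I would take $\boldsymbol{\mathcal{B}}$ to be the basis of quasi-compact open subsets of $X$ supplied by the Hochster characterization. Since $X$ is $T_0$, Proposition \ref{spec-ultra}(2) already makes the $\boldsymbol{\mathcal{B}}$-ultrafilter topology Hausdorff; for quasi-compactness, Proposition \ref{basic-ultra}(4) reduces the task to producing, for every ultrafilter $\mathcal{U}$ on $X$, a point of $X_{\boldsymbol{\mathcal{B}}}(\mathcal{U})$. Fixing a homeomorphism $X \cong \spec(R)$, I would take as candidate the ultrafilter limit point $P_\mathcal{U}$ from Example \ref{ultra-limit-point}, which is defined via the collection $\boldsymbol{\mathcal{P}}$ of principal opens. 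To promote $P_\mathcal{U}$ to a $\boldsymbol{\mathcal{B}}$-limit point, I would verify the biconditional ``$P_\mathcal{U} \in Q$ iff $Q \in \mathcal{U}$'' for each quasi-compact open $Q$, writing $Q$ as a finite union of principal opens and invoking closure of $\mathcal{U}$ under finite unions.

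For (ii) $\Rightarrow$ (iii), Proposition \ref{basic-ultra}(4) gives the nonemptiness of every $X_{\boldsymbol{\mathcal{B}}}(\mathcal{U})$ at once. To extract the $T_0$ condition on $(X,\mathcal{T})$, I would apply Proposition \ref{basic-ultra}(3) to the singleton $Y=\{x\}$: the only ultrafilter on $\{x\}$ is principal, so the formula reduces the closure of $\{x\}$ in the $\boldsymbol{\mathcal{B}}$-ultrafilter topology to the set of points sharing the same $\boldsymbol{\mathcal{B}}$-membership as $x$. Since the Hausdorff assumption makes this topology $T_1$, that closure is $\{x\}$, so distinct points of $X$ must be separated by some $B \in \boldsymbol{\mathcal{B}}$, and hence by an open set of $\mathcal{T}$.

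The implication (iii) $\Rightarrow$ (iv) is immediate, since any basis is a subbasis. For (iv) $\Rightarrow$ (i), I would form from the subbasis $\boldsymbol{\mathcal{S}}$ the basis $\boldsymbol{\mathcal{B}}$ of all finite intersections of its elements. Using closure of an ultrafilter under finite intersections and supersets, one verifies the identity $X_{\boldsymbol{\mathcal{B}}}(\mathcal{U})=X_{\boldsymbol{\mathcal{S}}}(\mathcal{U})$ for every ultrafilter $\mathcal{U}$ on $X$, so $X_{\boldsymbol{\mathcal{B}}}(\mathcal{U})$ is nonempty; Proposition \ref{basic-ultra}(4) then gives quasi-compactness of the $\boldsymbol{\mathcal{B}}$-ultrafilter topology, Proposition \ref{spec-ultra}(2) supplies Hausdorffness from the $T_0$ hypothesis, and Proposition \ref{spec-ultra}(3) finally concludes that $(X,\mathcal{T})$ is spectral. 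The only delicate step is the compactness verification in (i) $\Rightarrow$ (ii), where the ultrafilter limit construction must be lifted from the canonical principal-open basis up to the full family of quasi-compact opens; the remaining implications reduce to careful bookkeeping with the two preceding propositions.
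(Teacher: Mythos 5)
Your cycle (i) $\Rightarrow$ (ii) $\Rightarrow$ (iii) $\Rightarrow$ (iv) $\Rightarrow$ (i) is correct and rests on exactly the ingredients the paper has in mind: Proposition \ref{basic-ultra}(4) for quasi-compactness of the ultrafilter topology, Proposition \ref{spec-ultra}(2)--(3) for Hausdorffness and for concluding spectrality, and the identity $X_{\boldsymbol{\mathcal B}}(\ms U)=X_{\boldsymbol{\mathcal S}}(\ms U)$ (via closure of ultrafilters under finite intersections and supersets) to pass from a subbasis to the basis of its finite intersections. The only point where you depart from the paper's sketch is (i) $\Rightarrow$ (ii): the paper simply invokes Hochster's Corollary to Proposition 7 (compactness of the patch/constructible topology of a spectral space), whereas you verify quasi-compactness by hand, realizing $X\simeq\spec(R)$, taking the prime $P_{\ms U}$ as candidate limit point, and checking the biconditional on each quasi-compact open by writing it as a finite union of principal opens $\texttt{D}(f)$ and using that a finite union lies in $\ms U$ iff one of its members does; this is a correct and somewhat more self-contained substitute (it only needs that $P_{\ms U}$ is prime and that quasi-compact opens of $\spec(R)$ are finite unions of principal opens). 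Your extraction of the T$_0$ property in (ii) $\Rightarrow$ (iii), via Proposition \ref{basic-ultra}(3) applied to singletons together with the T$_1$ property of the Hausdorff ultrafilter topology, is also sound, and is a step one must not skip since (ii) does not assume T$_0$.
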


The proof of Theorem \ref{spec-charact} is not constructive, since it is based on the Axiom of Choice and some of its consequences.

As an application of Theorem \ref{spec-charact}, we now determine some new classes of spectral spaces.  The key point of the proofs resides on the existence of ultrafilter limit points. 

\begin{ex}\label{R-A-B}(cf. \cite[Proposition 3.5]{Fi})
Let $A\subseteq B$ be a ring extension, and let $X:={\bf R}(B|A)$ denote the collection of all the intermediate rings between $A$ and $B$. We can make $X$   a topological space,   by generalizing the Zariski topology introduced on the space of the overrings on an integral domain (see 1.2) and  taking as a subbasis of open sets the collection
$$
\boldsymbol{\mathcal S}:=\{{\bf R}(B|A[x])\mid x\in B \}.
$$
We claim that $X$ is a spectral space. 
It is easily seen that $X$ is T$_0$ because, if $C\neq D\in X$, we can assume, without loss of generality, that there is an element $c\in C\setminus D$, and then the open set ${\bf R}(B|A[c])$ contains $C$ and does not contain $D$. 
By Theorem \ref{spec-charact}, we have to show that, if $\ms U$ is an ultrafilter on $X$, then the ultrafilter limit set $X_{\boldsymbol{\mathcal S}}(\ms U)$ is nonempty. Consider the subset 
$$
A_{\ms U}:=\{x\in B\mid {\bf R}(B|A[x])\in\ms U \}
$$
of $B$. 
We claim that $A_{\ms U}$ is a subring of $B$. 

This follows immediately from the definition of an ultrafilter, since, if $x,y\in A_{\ms U}$ then each of the sets ${\bf R}(B|A[x-y]),\ {\bf R}(B|A[xy])$ contain ${\bf R}(B|A[x])\cap {\bf R}(B|A[y])\in \ms U$, and thus ${\bf R}(B|A[x-y]),\ {\bf R}(B|A[xy])\in\ms U$, that is, $x-y,\ xy\in A_{\ms U}$. 
Furthermore, $A_{\ms U}$ contains $A$ because, for each $a\in A$, ${\bf R}(B|A[a])=X\in\ms U$. 
Therefore, $A_{\ms U}$ is an element of $X$. The fact that $A_{\ms U}\in X_{\boldsymbol{\mathcal S}}(\ms U)$ follows immediately from the definition of $A_{\ms U}$ and thus, by Theorem \ref{spec-charact}, $X$ is a spectral space. 
\end{ex}

In particular, if $A:=D$ is an integral domain and $B:=K$ is the quotient field of $D$, we deduce from the previous example that:
\begin{cor} The space 
$\overr(D)$ of the overrings of an integral domain $D$, endowed with the Zariski topology, is a spectral space. 
\end{cor}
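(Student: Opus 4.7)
The plan is essentially immediate: I would simply specialize Example \ref{R-A-B} to the extension $D \subseteq K$. By definition, an overring of $D$ is a subring of $K$ containing $D$, so the set $\overr(D)$ coincides with the set ${\bf R}(K|D)$ of intermediate rings between $D$ and $K$. Moreover, the Zariski topology on $\overr(D)$, as recalled in Subsection 1.2, has as a basis of open sets the collection $\{\overr(D[F]) \mid F \subseteq K \text{ finite}\}$, which is precisely the topology generated by the subbasis $\boldsymbol{\mathcal S} := \{{\bf R}(K|D[x]) \mid x \in K\}$ used in Example \ref{R-A-B} (since finite intersections of subbasis elements give the basic open sets $\overr(D[F])$ for finite $F$).

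Therefore the corollary is just the statement obtained from Example \ref{R-A-B} by taking $A := D$ and $B := K$. I would briefly verify the identification of the two topologies and then cite the example to conclude that $\overr(D)$ is spectral.

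The only point that deserves a line of comment is that the subbasis $\boldsymbol{\mathcal S}$ of Example \ref{R-A-B} and the natural basis described in Subsection 1.2 generate the same topology: since ${\bf R}(K|D[\{x_1,\dots,x_n\}]) = \bigcap_{i=1}^n {\bf R}(K|D[x_i])$, the sets $\overr(D[F])$ with $F$ finite are exactly the finite intersections of elements of $\boldsymbol{\mathcal S}$, and these form a basis of the topology generated by $\boldsymbol{\mathcal S}$. There is no real obstacle here; the work has been carried out in full in the preceding example, and the corollary is purely a restatement in the classical setting of an integral domain and its quotient field.
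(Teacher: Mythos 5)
Your proposal is correct and is exactly the paper's argument: the corollary is obtained by specializing Example \ref{R-A-B} to $A:=D$ and $B:=K$, after observing that $\overr(D)={\bf R}(K|D)$ and that the Zariski topology of Subsection 1.2 is the one generated by the subbasis $\boldsymbol{\mathcal S}$. Nothing further is needed.
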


\begin{ex}(cf. \cite[Proposition 3.6]{Fi})
Let $A,B$ and $X$ be as in the previous example, and let $X':={\bf R'}(B|A)$ be the subset of $X$ consisting of all the subrings of $B$ that are integrally closed in $B$. We claim that, with the subspace topology induced by that of $X$, the topological space $X'$ is spectral.

It is obvious that a subbasis of open sets for the topology of $X'$ is given by the family 
$
\boldsymbol{\mathcal S'}:=\{{\bf R'}(B|A[x]) \mid x\in B\}
$.
As in the previous example, the key fact is the existence in  $X'$  of ultrafilter limit points, with respect to every ultrafilter $\ms U$ on $X'$. 
Indeed, it is not difficult to show that 
$$
A'_{\ms U}:=\{x\in B\mid {\bf R'}(B|A[x])\in\ms U\}
$$
is a subring of $B$ containing $A$ that is integrally closed in $B$. 
Thus, again by definition, the ultrafilter limit set $X'_{\boldsymbol{\mathcal S'}}(\ms U)$ is nonempty, containing $A'_{\ms U}$. Again, by Theorem \ref{spec-charact},  we conclude that $X'$ is a spectral space. 
\end{ex}


In particular, if $A:=D$ is an integral domain and $B:=K$ is the quotient field of $D$, we deduce from the previous example that:
\begin{cor} The subspace 
$\overric(D)$ of $\overr(D)$, consisting of the integrally closed overrings of an integral domain $D$, endowed with the Zariski topology, is a spectral space. 
\end{cor}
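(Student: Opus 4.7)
The plan is to observe that this corollary is an immediate specialization of the preceding example (the one establishing that $\mathbf{R}'(B|A)$ is spectral) to the case $A := D$ and $B := K$, where $K$ is the quotient field of $D$. So no fresh ultrafilter argument is needed; I just need to verify the dictionary between the two setups and the matching of topologies.

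First I would unwind the definitions on objects. By definition, an overring of $D$ is a subring of $K$ containing $D$, so $\mathbf{R}(K|D) = \overr(D)$ as sets. Next, a subring $T$ of $K$ with $D \subseteq T \subseteq K$ is integrally closed in $K$ if and only if $T$ is an integrally closed domain in the usual sense; the point is that $T$ and $K$ share the same field of fractions (since $D \subseteq T \subseteq K$ forces $\operatorname{qf}(T) = K$), so ``integral closure in $B = K$'' for the general construction coincides with the absolute notion of integral closure for an overring. This yields $\mathbf{R}'(K|D) = \overric(D)$.

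Next I would check that the topologies coincide. The subbasic open sets in the preceding example for the topology on $\mathbf{R}'(K|D)$ are the sets $\mathbf{R}'(K|D[x]) = \overric(D[x])$ for $x \in K$, and these are precisely the traces on $\overric(D)$ of the subbasic open sets $\overr(D[x])$ of the Zariski topology on $\overr(D)$. Hence the intrinsic topology considered in the previous example equals the subspace topology induced from $(\overr(D), \mbox{Zariski})$, which is exactly the topology in the statement.

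Granted these two identifications, the result is immediate from the previous example, which already showed that $\mathbf{R}'(B|A)$ is a spectral space via Theorem \ref{spec-charact} (producing, for any ultrafilter $\ms U$, the limit point $A'_{\ms U} \in \mathbf{R}'(B|A)$). The only mildly delicate point — and the one I would double-check — is the equivalence between ``integrally closed in $K$'' and ``integrally closed'' for overrings; once that is in place the rest is purely formal. No additional obstacle is anticipated.
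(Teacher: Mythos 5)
Your proposal is correct and matches the paper's own reasoning: the corollary is deduced by specializing the preceding example on $\mathbf{R}'(B|A)$ to $A:=D$, $B:=K$, with the identifications of the underlying sets and of the Zariski (subspace) topologies you describe. The extra check that ``integrally closed in $K$'' coincides with the absolute notion for overrings is a sound and harmless addition.
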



\begin{ex}
We preserve the notation of Example \ref{R-A-B}, and let $X'':=\mf L(B|A)$ be the (possibly empty) subspace of $\mf R(B|A)$ consisting of all the local rings $T$ such that $A\subseteq T\subseteq B$. A subbasis for the open sets of $X''$ is clearly the family
$$
\mathcal S'':=\{\mf L(B|A[x])\mid x\in B \}
$$
We claim that, if $X''$ is nonempty, then it is spectral. Again, we need to prove that, for any ultrafilter $\ms U$ on $X''$ the ultrafilter limit set $X''_{\mathcal S''}(\ms U)$ is nonempty. As before, it is easy to infer that $A''_{\ms U}:=\{x\in B\mid \mf L(B|A[x])\in \ms U \}\in \mf R(B|A)$. It will be immediate to conclude that $A''_{\ms U}\in X''_{\mathcal S''}(\ms U)$ if we show that $A''_{\ms U}$ is a local ring. We claim that the unique maximal ideal of $A''_{\ms U}$ is 
$$
M:=\{x\in B\mid \{T\in X''\mid x\in T\setminus U(T)\}\in\ms U \}  
$$
where, as usual, $U(T)$ denotes the set of units of a ring $T$. Thus it suffices to note that $U(A''_{\ms U})=A''(\ms U)\setminus M$ (this follows easily from definitions).
\end{ex}

In particular, if $A:=D$ is an integral domain and $B:=K$ is the quotient field of $D$, we deduce from the previous example that:
\begin{cor} The subspace 
$\overrloc(D)$ of $\overr(D)$, consisting of the local overrings of an integral domain $D$,  endowed with the Zariski topology, is a spectral space. 
\end{cor}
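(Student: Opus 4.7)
The plan is to deduce this directly from the preceding example by specializing $A:=D$ and $B:=K$, where $K$ is the quotient field of $D$. Under this identification, the overrings of $D$ are exactly the intermediate rings in $\mathbf{R}(K|D)$, and the local overrings of $D$ form the subspace $\mathbf{L}(K|D)$. So I would first observe that the Zariski topology on $\overrloc(D)$ coincides with the subspace topology inherited from $\mathbf{R}(K|D) = \overr(D)$, so $\overrloc(D)$ and $\mathbf{L}(K|D)$ agree as topological spaces.

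Next, I would check that the preceding example really applies, i.e.\ that $\mathbf{L}(K|D)$ is nonempty. This is immediate: for any prime ideal $P$ of $D$, the localization $D_P$ is a local overring of $D$, so $D_P\in\overrloc(D)$. (In particular, taking $P$ maximal produces concrete examples; one could even take a valuation overring, which exists by Zorn's lemma applied to suitable dominating pairs.)

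Having verified nonemptiness, the previous example gives spectrality of $\mathbf{L}(K|D)$ via Theorem \ref{spec-charact}: given any ultrafilter $\ms U$ on $\overrloc(D)$, the ring
\[
D_{\ms U} := \{x\in K\mid \mathbf{L}(K|D[x])\in\ms U\}
\]
is an overring of $D$, it is local with maximal ideal $\{x\in K\mid \{T\in\overrloc(D)\mid x\in T\setminus U(T)\}\in\ms U\}$, and it lies in the ultrafilter limit set with respect to the natural subbasis $\{\mathbf{L}(K|D[x])\mid x\in K\}$. Since the ambient space $\overr(D)$ is T$_0$ (as shown in Example \ref{R-A-B}) and the subspace topology of a T$_0$ space is T$_0$, condition (iv) of Theorem \ref{spec-charact} is satisfied, and spectrality follows.

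The only step that is not completely mechanical is the verification that $D_{\ms U}$ is local; however, this is exactly the content of the previous example and requires no new argument in the specialization $B=K$. No genuine obstacle arises: the corollary is a direct restriction of a result already proved in greater generality.
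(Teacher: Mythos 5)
Your proposal is correct and follows essentially the same route as the paper: the corollary there is obtained precisely by specializing the preceding example to $A:=D$ and $B:=K$, exactly as you do, with your nonemptiness check (e.g.\ $D_P\in\overrloc(D)$) being a harmless added verification.
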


\section{Spaces of semistar operations}

Let $D$ be an integral domain with quotient field $K$. 
As in the star operation setting, to each semistar operation $\star$ can be associated a map   $\stf: \FF(D) \rightarrow \FF(D)$ defined by
\begin{equation*}
	E^{\stf}:=\bigcup\{F^\star\mid F\subseteq E,\ F \in\f(D)\}\,,
\end{equation*}
for every $E\in\FF(D)$. The map $\stf$ is again a semistar operation, which coincides with $\star$ on finitely generated modules; moreover, $(\stf)_{\!{_f}}=\stf$. If $\star=\stf$, we say that $\star$ is a \emph{semistar operation of finite type}. We call $\stf$ the \emph{finite-type semistar operation associated to} $\star$.

 For each $T\in \overr(D)$, the map $\wedge_{\{T\}}: \FF(D) \rightarrow \FF(D)$, defined by $E^{\wedge_{\{T\}}} := ET$, for each $E \in \FF(D)$, is an example of semistar operation of finite type on $D$, called the \emph{semistar extension to $T$}.

We denote by  $\SStar(D)$ (respectively, $\SStarf(D)$) the set of all semistar operations (respectively, semistar operations of finite type) on $D$. 
The set $\SStar(D)$ can be endowed with a natural partial order $\boldsymbol{\preceq}$ 
  which turns it into a complete lattice: if $\star_1,\star_2$ are two semistar operations, say that $\star_1\boldsymbol{\preceq}\star_2$
  if $E^{\star_1}\subseteq E^{\star_2}$  for every $E\in\FF(D)$. 
 In particular, $\stf \boldsymbol{\preceq}\star$, and $\stf$ is the biggest semistar operation of finite type smaller than $\star$. 

The infimum $\wedge_{\mathbf{\mathscr{S}}}$ of a nonempty family $\mathbf{\mathscr{S}}$ of semistar operations can be written explicitly as follows:
\begin{equation*}
	E^{\wedge_{\mathbf{\mathscr{S}}}}=\bigcap\{E^\star\mid\star\in\mathbf{\mathscr{S}}\},  \;\;\; \mbox{ for each } E\in\FF(D)\,.
\end{equation*}

 In particular, if $\boldsymbol{\mathcal{T}}$ is a nonempty family of overrings of $D$, then the infimum of the family of semistar operations $\{\wedge_{\{T\}}\mid T\in\boldsymbol{\mathcal{T}}\}$ is denoted by $\wedge_{\boldsymbol{\mathcal{T}}}$.

On the other hand, there is not a general explicit formula for the supremum $\vee_{\mathbf{\mathscr{S}}}:=\bigwedge\{\sigma\in\SStar(D)\mid\star \boldsymbol{\preceq}\sigma\text{~for all~}\star\in\mathbf{\mathscr{S}}\}$, although, if $\mathbf{\mathscr{S}}\subseteq\SStarf(D)$, then
\begin{equation}\label{eq:sup}
	E^{\vee_{\mathbf{\mathscr{S}}}}=\bigcup\{E^{\star_1\circ\star_2\circ\cdots\circ\star_n}\mid \star_1,\ldots,\star_n\in\mathbf{\mathscr{S}}\}
\end{equation}
where $\star_1\circ\star_2\circ\cdots\circ\star_n$ denotes the usual composition of functions   (see \cite[p.1628]{an} and  \cite[Lemma 2.12]{FiSp}).

\medskip

A nonzero ideal $I$ of $D$ is called a \emph{quasi-$\star$-ideal} if $I=I^\star\cap D$. A \emph{quasi-$\star$-prime} is a quasi-$\star$-ideal which is also a prime ideal; the set of all quasi-$\star$-prime ideals of $D$ is denoted by $\qspec^\star(D)$. The set of maximal elements in the set of proper quasi-$\star$-ideals of $D$ (ordered by set-theoretic inclusion) is denoted by  $\qmax^\star(D)$, and it is a subset of $\qspec^\star(D)$. 
 By Zorn's Lemma, it is easy to show that if $\star$ is a semistar operation of finite type then $\qmax^\star(D) \neq \emptyset$.
 If every quasi-$\star$-ideal is contained in a quasi-$\star$-prime, then $\star$ is said to be \emph{quasi-spectral} or \emph{semifinite}. Every operation of finite type  is not only quasi-spectral, but it has the stronger property that every quasi-$\star$-ideal is contained in a maximal quasi-$\star$-ideal. Note that a semistar operation $\star$ may be quasi-spectral even if $\qmax^{\star}(D)$ is empty (see \cite[Remark 5.6]{FiSp} for an example).

 A semistar operation $\star$ is called \emph{spectral} if there is a nonempty subset $Y\subseteq\spec(D)$ such that $\star=\wedge_{\boldsymbol{\mathcal{L}}(Y)}$, where $\boldsymbol{\mathcal{L}}(Y):=\{D_P\mid P\in Y\}$. We set  $s_Y:=\wedge_{\boldsymbol{\mathcal{L}}(Y)}$ and we call $s_Y$ {\it the spectral semistar operation associated to} $Y\subseteq\spec(D)$.

A \emph{semistar operation} $\star$ is called \emph{stable} if $(E\cap F)^\star=E^\star\cap F^\star$ for every pair  $E,F\in\FF(D)$.

\begin{oss} \label{spectral}
 Every spectral semistar operation is quasi-spectral (or semifinite) by \cite[Lemma 1.4(5)]{fohu} and 
every spectral semistar operation, or more generally every operation induced by a family of $D$-flat overrings of $D$, is stable. However,  the converse does not hold in general   \cite[Section 3, page 441]{heinz-roit}, but if $\star$ is a stable semistar operation then $\star$ is spectral if and only if it is quasi-spectral (see \cite[Theorem 4]{an-overrings} and \cite[Theorem 4.12(3)]{fohu}).   In particular, a stable semistar operation of finite type is spectral.
\end{oss}

\medskip

In \cite{FiSp}, the set $\SStar(D)$ was endowed with a topology (called the \emph{Zariski topology}) by declaring open the sets of the form
\begin{equation*}
	\mbox{\texttt{V}}_E:=\{\star\in\SStar(D)\mid 1\in E^\star\}\,,
\end{equation*}
for $E \in \FF(D)$.
This topology makes $\SStar(D)$ into a quasi-compact, T$_0$ space with a unique closed point (the identity semistar operation $\mbox{\it\texttt{d}}_D$) and a generic point (the trivial semistar extension $\wedge_{\{K\}}$). In particular, $\SStar(D)$ is never T$_1$ (nor T$_2$) unless $D=K$.

\begin{prop}\label{immersione}
	Let $D$ be an integral domain,   let $\overr(D)$ and $\SStarf(D)$ be endowed with their Zariski topologies,  and let $\iota:\overr(D)\rightarrow \SStarf(D)$ be the injective map defined by $\iota(T) := \wedge_{\{T\}}$, for each $T \in \overr(D)$.  Then, the following statements hold. 
	\begin{enumerate}
		\item[\rm(1)] The map  $\iota$ is a topological embedding \cite[Proposition 2.5]{FiSp}. 
		\item[\rm(2)] The mapping $\pi:\SStarf(D)\rightarrow \overr(D)$,  defined by  $\pi(\star):=  D^\star$, for each $\star \in\SStarf(D)$,  is a continuous surjection such that $\pi\circ \iota$ is the identity of $\overr(D)$. In other words, $\pi$ is a topological retraction.
	\end{enumerate}
\end{prop}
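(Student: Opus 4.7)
The plan is to derive both parts from a single \emph{scaling identity}, a direct consequence of axiom $(\mathbf{\star_4})$: for every $x \in K \setminus \{0\}$ and every semistar operation $\star$ on $D$,
\[
1 \in (Dx^{-1})^\star \;\Longleftrightarrow\; x \in D^\star,
\]
obtained by writing $(Dx^{-1})^\star = x^{-1} D^\star$. This equivalence will serve as the bridge between the basic opens $\texttt{V}_E$ of $\SStarf(D)$ and the basic opens $\overr(D[F])$ of $\overr(D)$.

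I would first dispatch part (2). The map $\pi$ is well defined: $D^\star$ contains $D$ by $(\mathbf{\star_1})$ and, for $x \in D^\star$, axioms $(\mathbf{\star_4})$ and $(\mathbf{\star_3})$ give $xD^\star = (xD)^\star \subseteq (D^\star)^\star = D^\star$, so $D^\star$ is multiplicatively closed and thus an overring of $D$. The scaling identity then immediately yields
\[
\pi^{-1}(\overr(D[x])) = \texttt{V}_{Dx^{-1}} \qquad \text{for every } x \in K \setminus \{0\},
\]
showing continuity of $\pi$. A direct computation gives $\pi(\iota(T)) = D^{\wedge_{\{T\}}} = DT = T$, so $\pi \circ \iota = \mbox{id}_{\overr(D)}$, providing both the surjectivity and the retraction statement.

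For part (1), injectivity of $\iota$ is immediate from $\pi \circ \iota = \mbox{id}$. Next I would check continuity: using that $\{\texttt{V}_F : F \in \f(D)\}$ is a basis for the topology on $\SStarf(D)$ (since finite-type operations satisfy $1 \in E^\star$ iff $1 \in F^\star$ for some finitely generated $F \subseteq E$), for $F = (x_1,\ldots,x_n) \in \f(D)$ one computes
\[
\iota^{-1}(\texttt{V}_F) = \{T : 1 \in FT\} = \bigcup_{\sum t_i x_i = 1} \overr(D[t_1,\ldots,t_n]),
\]
a union of Zariski-basic opens in $\overr(D)$. To finish, I would establish openness of $\iota$ onto its image by reading the scaling identity backwards: for any finite $F \subseteq K$,
\[
\iota(\overr(D[F])) = \iota(\overr(D)) \cap \bigcap_{y \in F,\, y \neq 0} \texttt{V}_{Dy^{-1}},
\]
so that the image of each basic open of $\overr(D)$ is the trace on $\iota(\overr(D))$ of an open set of $\SStarf(D)$.

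The main, rather mild, obstacle lies in the continuity of $\iota$: showing that $\{T : 1 \in (x_1,\ldots,x_n)T\}$ is Zariski-open in $\overr(D)$ requires parameterising such $T$ by the tuples $(t_1,\ldots,t_n) \in K^n$ realising $\sum t_i x_i = 1$, whence the stated union over finitely generated $D$-extensions. Every other step is a two-line consequence of the scaling identity together with the defining axioms of a semistar operation.
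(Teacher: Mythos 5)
Your proposal is correct, and for part (2) it coincides with the paper's own justification: the displayed computation $\pi^{-1}(\overr(D[x]))=\{\star\mid x\in D^\star\}=\texttt{V}_{x^{-1}D}$ is exactly the note following the proposition, and $\pi\circ\iota=\mathrm{id}$ is the same one-line verification $D^{\wedge_{\{T\}}}=T$. For part (1) the survey gives no argument at all (it simply cites \cite[Proposition 2.5]{FiSp}), so your self-contained proof is a genuine addition; both halves of it check out: $\iota^{-1}(\texttt{V}_F)=\{T\mid 1\in FT\}$ is indeed the union of the opens $\overr(D[t_1,\dots,t_n])$ over tuples with $\sum t_ix_i=1$, and $\iota(\overr(D[F]))=\iota(\overr(D))\cap\bigcap_{y\in F,\,y\neq 0}\texttt{V}_{Dy^{-1}}$ follows from the same scaling identity. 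Two small remarks. First, $\{\texttt{V}_F\mid F\in\f(D)\}$ is only a \emph{subbasis} of $\SStarf(D)$ (as the paper states), not a basis, since $\texttt{V}_F\cap\texttt{V}_{F'}$ need not be of the form $\texttt{V}_{F''}$; this is harmless for your continuity check, where subbasic opens suffice. Second, your ``openness onto the image'' computation can be skipped entirely: once continuity of both $\iota$ and $\pi$ and the identity $\pi\circ\iota=\mathrm{id}_{\overr(D)}$ are in hand, one has $\iota(U)=\iota(\overr(D))\cap\pi^{-1}(U)$ for every open $U\subseteq\overr(D)$, so any continuous injection admitting a continuous retraction is automatically a topological embedding; thus part (1) follows formally from part (2) plus continuity of $\iota$, which slightly streamlines your argument.
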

Note that part (2) of the previous proposition follows from the fact that, for each subbasic open set  $\texttt{B}_x := \overr(D[x])$ of $\overr(D)$, we have $\pi^{-1}(\texttt{B}_x) = \{ \star \in \SStarf(D) \mid D[x] \subseteq D^\star\} =
\{ \star \in \SStarf(D) \mid 1 \subseteq (x^{-1}D)^\star \} = \texttt{V}_{x^{-1}D}$. 

\medskip

The following result relates the quasi-compactness of a collection of semistar operations on the same integral domain with the finite type property of their infimum.

\begin{prop}\label{comp-tipofinito} \rm{(cf. \cite[Proposition 2.7 ]{FiSp})}
	Let $D$ be an integral domain and let $\mathscr S$ be a quasi-compact subspace of $\SStarf(D)$. Then, $\wedge_{\mathscr S}$ is of finite type. 
\end{prop}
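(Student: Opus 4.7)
The plan is to use quasi-compactness of $\mathscr S$ to replace the potentially infinite intersection defining $\wedge_{\mathscr S}$ by a finite one, for each fixed $E \in \FF(D)$. Recall that $\star$ being of finite type amounts to the following condition: for every $E\in\FF(D)$ and every $x\in E^\star$, there is $F\in\f(D)$ with $F\subseteq E$ and $x\in F^\star$. By the standard rescaling $E\leadsto x^{-1}E$ (using axiom $(\mathbf{\star_4})$), I would first reduce the problem to the case $x=1$: it suffices to show that whenever $E\in\FF(D)$ satisfies $1\in E^{\wedge_{\mathscr S}}$, one can find a finitely generated $F\subseteq E$ with $1\in F^{\wedge_{\mathscr S}}$.

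Fix such an $E$. For each $\star\in\mathscr S$ we have $1\in E^\star$, and since $\star\in\SStarf(D)$ is of finite type there exists $F_\star\in\f(D)$ with $F_\star\subseteq E$ and $1\in F_\star^\star$; in other words, $\star\in \texttt{V}_{F_\star}$. The collection $\{\texttt{V}_{F_\star}\cap\mathscr S\mid \star\in\mathscr S\}$ is therefore an open cover of $\mathscr S$ in the Zariski topology. Here I am using precisely the definition of the basic Zariski opens $\texttt{V}_E=\{\sigma\mid 1\in E^\sigma\}$, which is designed so that membership in a basic open is witnessed by a finitely generated datum.

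Invoking the quasi-compactness of $\mathscr S$, I would extract a finite subcover: $\mathscr S\subseteq \texttt{V}_{F_{\star_1}}\cup\cdots\cup \texttt{V}_{F_{\star_n}}$. Set $F:=F_{\star_1}+\cdots+F_{\star_n}$, which is finitely generated over $D$ and contained in $E$. For an arbitrary $\sigma\in\mathscr S$ there is an index $i$ with $\sigma\in \texttt{V}_{F_{\star_i}}$, hence $1\in F_{\star_i}^\sigma\subseteq F^\sigma$ by monotonicity $(\mathbf{\star_2})$. Thus $1\in \bigcap_{\sigma\in\mathscr S}F^\sigma=F^{\wedge_{\mathscr S}}$, which concludes the argument.

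There is no real obstacle here beyond carefully matching the basic open sets of the Zariski topology with the finite-type witnesses: the definition of $\texttt{V}_E$ makes the open cover step essentially automatic, and the rescaling trick handles the passage from $x=1$ to general elements. The only subtlety worth flagging is the mild point that one should verify the reduction to $x=1$ handles the case $x=0$ trivially (since $0$ belongs to every closure), so no genuine case distinction is needed.
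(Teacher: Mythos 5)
Your proof is correct and is essentially the argument behind the cited result \cite[Proposition 2.7]{FiSp} (the survey itself only states the proposition with that reference): reduce to $1\in E^{\wedge_{\mathscr S}}$ via $(\mathbf{\star_4})$, use the finite-type property of each $\star\in\mathscr S$ to produce the open cover by sets $\texttt{V}_{F_\star}\cap\mathscr S$, extract a finite subcover by quasi-compactness, and take $F:=F_{\star_1}+\cdots+F_{\star_n}$. No gaps; the $x=0$ remark is indeed the only (trivial) side case.
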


\begin{oss}
Let $\mathscr S$ be a subset of $\SStar(D)$ and set $\mathscr S_{\!{_f}}:= \{ \stf \mid \star \in \mathscr S\}$. Consider the following properties:
\begin{itemize}
\item[\rm(a)\;]  $\mathscr S$ is quasi-compact in $\SStar(D)$;
\item[\rm(b)\;]  $\mathscr S_{\!{_f}}$ is  quasi-compact in $\SStarf(D)$;
\item[\rm({c})\;]    $\wedge_{\mathscr S_{\!{_f}}}$ is a semistar operation of finite type;
\item[\rm({d})\;]    $\wedge_{\mathscr S_{\!{_f}}} = (\wedge_{\mathscr S})_{\!{_f}}$.
\end{itemize}
Then (a) $\Rightarrow$ (b) $\Rightarrow$ (c) $\Leftrightarrow$ (d). 

In fact, it is straightforward that (a) $\Rightarrow$ (b) (see also Proposition \ref{prop:retraction}).  
By Proposition \ref{comp-tipofinito}, (b) $\Rightarrow$ ({c}).
For ({c}) $\Rightarrow$ ({d}), note that in general  
$\wedge_{\mathscr S_{\!{_f}}} \leq \wedge_{\mathscr S}$ 
and  $(\wedge_{\mathscr S})_{\!{_f}} \leq \wedge_{\mathscr S_{\!{_f}}}$.   
 The conclusion follows from the fact that, under ({c}), $ (\wedge_{{\mathscr S}_{\!{_f}}})_{\!_{\!{_f}}}  = \wedge_{{\mathscr S}_{\!{_f}}}$.
 Finally, (d) $\Rightarrow$ ({c}) is trivial.
\end{oss}

 Since, for each overring $T$ of an integral domain $D$, the semistar operation $\wedge_{\{T\}}$ is of  finite type, we get the following result, just by applying Propositions \ref{immersione} and \ref{comp-tipofinito}.
 
\begin{cor}\label{qcover}
{\rm (cf. \cite[Corollary 2.8]{FiSp})}
	Let $D$ be an integral domain and let $\boldsymbol{\mathcal{T}}$ be a quasi-compact subspace of $\overr(D)$. Then $\wedge_{\boldsymbol{\mathcal{T}}}$ is of finite type. 
\end{cor}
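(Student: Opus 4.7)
The plan is to transport the quasi-compactness of $\boldsymbol{\mathcal{T}}$ from $\overr(D)$ into $\SStarf(D)$ through the embedding $\iota$ of Proposition \ref{immersione}, and then invoke Proposition \ref{comp-tipofinito}.

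First, I would observe that each $\wedge_{\{T\}}$ (for $T\in\overr(D)$) is a semistar operation of finite type, as noted explicitly in the text preceding the corollary. Hence the image $\iota(\boldsymbol{\mathcal{T}}) = \{\wedge_{\{T\}}\mid T\in\boldsymbol{\mathcal{T}}\}$ is a well-defined subset of $\SStarf(D)$.

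Next, since by Proposition \ref{immersione}(1) the map $\iota:\overr(D)\to\SStarf(D)$ is a topological embedding (in particular continuous), the continuous image of the quasi-compact subspace $\boldsymbol{\mathcal{T}}$ is quasi-compact; that is, $\iota(\boldsymbol{\mathcal{T}})$ is a quasi-compact subspace of $\SStarf(D)$. Applying Proposition \ref{comp-tipofinito} to $\mathscr{S}:=\iota(\boldsymbol{\mathcal{T}})$, the infimum $\wedge_{\iota(\boldsymbol{\mathcal{T}})}$ is a semistar operation of finite type.

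Finally, it only remains to identify this infimum with $\wedge_{\boldsymbol{\mathcal{T}}}$: by the very definition recalled in the section preamble, $\wedge_{\boldsymbol{\mathcal{T}}}$ is precisely the infimum in $\SStar(D)$ of the family $\{\wedge_{\{T\}}\mid T\in\boldsymbol{\mathcal{T}}\}=\iota(\boldsymbol{\mathcal{T}})$. Hence $\wedge_{\boldsymbol{\mathcal{T}}}=\wedge_{\iota(\boldsymbol{\mathcal{T}})}$ is of finite type, as required.

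There is no real obstacle here: the argument is essentially a syntactic unpacking of the two preceding propositions. The only thing worth being careful about is the trivial but indispensable verification that $\iota$ takes values in $\SStarf(D)$ (not merely in $\SStar(D)$), so that Proposition \ref{comp-tipofinito} — which is stated for quasi-compact subspaces \emph{of} $\SStarf(D)$ — actually applies.
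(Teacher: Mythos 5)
Your argument is exactly the paper's: the corollary is obtained by noting each $\wedge_{\{T\}}$ is of finite type, pushing the quasi-compact set $\boldsymbol{\mathcal{T}}$ through the embedding $\iota$ of Proposition \ref{immersione} into $\SStarf(D)$, and applying Proposition \ref{comp-tipofinito}, with $\wedge_{\boldsymbol{\mathcal{T}}}=\wedge_{\iota(\boldsymbol{\mathcal{T}})}$ holding by definition. Correct, and essentially identical to the paper's proof.
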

In particular, the previous corollary applies when $\boldsymbol{\mathcal T}$ is {\it locally finite},    i.e., if every nonzero element of $D$ is nonunit in finitely many overrings of the family $\boldsymbol{\mathcal{T}}$ \cite[Corollary 2.10]{FiSp}.
However, the finite type property of a semistar operation $\wedge_{\boldsymbol{\mathcal{T}}}$, induced by a collection $\boldsymbol{\mathcal{T}}$ of overrings, does not imply the quasi-compactness of $\boldsymbol{\mathcal{T}}$, as the following example shows.   This example provides a negative answer to the Conjecture in \cite[page 214]{FiSp}.

\begin{ex}
Let $k$ be a field, let $\X$ be an indeterminate over $k$,  let $D:=k[\![\X^4,\X^5,\X^6,\X^7]\!]=k+\X^4k[\![\X]\!]$  and let $K:= k(\!(\X)\!)$.
	 Since $D$ is    Noetherian and a conductive domain (i.e., $(D:T) \neq (0)$ for each $T \in\overr(D)$ with $T\neq K$, see \cite[Theorem 1]{badofo}), $\FF(D)=\F(D) \cup\{K\}= \f(D)\cup\{K\}$, and thus every semistar operation on $D$ is of finite type. For every $\alpha\in K$, consider the ring $T_\alpha:=D[\X^2+\alpha \X^3]= k+(\X^2+\alpha \X^3)k+\X^4k[[\X]] $, and, for every $A\subseteq k$, let $\boldsymbol{\mathcal{T}}_{\!A}:=\{T_\alpha\mid\alpha\in A\}$. Then,   as observed above,  the semistar operation $\wedge_{\boldsymbol{\mathcal{T}}_{\!A}}$ is of finite type.
	  However, if $A$ is infinite (so, for example, if $k$ is infinite and $A=k$), then ${\boldsymbol{\mathcal{T}}_{\!A}}$ is not quasi-compact. Indeed, the open cover $\{\overr(T_\alpha)\mid \alpha\in A\}$ of  $\boldsymbol{\mathcal{T}}_{\!A}$ in $\overr(D)$ has no finite subcovers, since $\overr(T_\alpha)\cap {\boldsymbol{\mathcal{T}}_{\!A}}=\{T_\alpha\}$.
\end{ex}


The following example shows how to use Corollary  \ref{qcover}  for establishing the failure of quasi-compactness for some distinguished subspaces of  $\overr(D)$.


\begin{ex}
	Let $D$ be a Noetherian domain of dimension $\geq 2$, and let  $\boldsymbol{\mathcal{D}}$  be the set of Noetherian valuation overrings of $D$, i.e., the union of $\{K\}$ with the set of discrete valuation overrings of $D$. 
	If $I$ is a proper ideal of $D$, then $I^{\wedge_{\boldsymbol{\mathcal{D}}}}=I^{\mbox{\it\tiny\texttt{b}}}$, where ${\mbox{\it\texttt{b}}}:=\wedge_{\Zar(D)}$ (see, for example, \cite[Proposition 6.8.4]{swhu}, after noting that the terminology used therein is slightly different).
	In particular, the same holds for every $F \in \f(D)$, so that $\left(\wedge_{\boldsymbol{\mathcal{D}}}\right)_f={\mbox{\it\texttt{b}}}$. 
	However, if $W\in\Zar(D)\setminus \boldsymbol{\mathcal{D}}$ 
	 (for example, if $\dim(W)\geq 2$,
	where the existence of such a $W$ 
	 is guaranteed by \cite[Corollary 19.7]{gi}),
	  then $W$ is contained in (at most) one element $V$ of $\boldsymbol{\mathcal{D}}$,  
	  so that $WV= V$, while $WV'=K$ for each $V'\in \boldsymbol{\mathcal{D}}$, $V'\neq V$. 
	  Hence, $W^{\wedge_{\boldsymbol{\mathcal{D}}}}\neq W$, 
	  while $W^{\mbox{\it\tiny\texttt{b}}}=W$ and  thus, $\wedge_{\boldsymbol{\mathcal{D}}} \neq {\mbox{\it\texttt{b}}}$. 
	  Therefore, $\wedge_{\boldsymbol{\mathcal{D}}}$ is not of finite type, and so ${\boldsymbol{\mathcal{D}}}$ 
	  is not a quasi-compact subset of $\overr(D)$ (or of $\Zar(D)$).
\end{ex}

\begin{teor}\label{prop:insfinss-spectral}{\rm(cf. \cite[Theorem 2.13]{FiSp})}
	Let $D$ be an integral domain. Then, $\SStarf(D)$ is a spectral space.
\end{teor}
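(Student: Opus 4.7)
The natural strategy is to invoke criterion (iv) of Theorem \ref{spec-charact}: produce a subbasis $\boldsymbol{\mathcal S}$ for the Zariski topology on $\SStarf(D)$ such that every ultrafilter $\ms U$ on $\SStarf(D)$ has nonempty limit set with respect to $\boldsymbol{\mathcal S}$. The T$_0$ axiom is inherited from $\SStar(D)$: distinct operations of finite type must differ already on some $F\in\f(D)$, and then a suitable $\texttt{V}_{z^{-1}F}$ separates them. As subbasis I would take
$$
\boldsymbol{\mathcal S} := \{\texttt{V}_F \cap \SStarf(D) \mid F \in \f(D)\};
$$
this is actually a basis of the subspace topology, because for $\star$ of finite type one has $1 \in E^\star$ iff $1 \in F^\star$ for some finitely generated $F\subseteq E$.

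Given an ultrafilter $\ms U$ on $\SStarf(D)$, I would construct an explicit candidate limit point $\star_{\ms U}$ as follows. For each $F \in \f(D)$, set
$$
F^{\star_{\ms U}} := \bigl\{z \in K \mid \{\sigma \in \SStarf(D) \mid z \in F^{\sigma}\} \in \ms U\bigr\},
$$
and extend to $\FF(D)$ by the finite-type rule $E^{\star_{\ms U}} := \bigcup\{F^{\star_{\ms U}} \mid F \in \f(D),\ F \subseteq E\}$. Routine ultrafilter combinatorics (closure under finite intersections and under supersets) shows that each $F^{\star_{\ms U}}$ is a nonzero $D$-submodule of $K$ containing $F$, that $\star_{\ms U}$ satisfies $(\star_1)$, $(\star_2)$, $(\star_4)$, and that $\star_{\ms U}$ is of finite type by construction.

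I expect the main technical obstacle to be the idempotency axiom $(\star_3)$. To prove $(E^{\star_{\ms U}})^{\star_{\ms U}} \subseteq E^{\star_{\ms U}}$, I would take $z$ in the left-hand side and, by finite-type, extract finitely many $y_1,\ldots,y_n \in E^{\star_{\ms U}}$ with $z \in (y_1,\ldots,y_n)^{\star_{\ms U}}$. Summing the finitely generated witnesses for each $y_i$ produces a single $F \in \f(D)$ with $F \subseteq E$ and such that the sets $\{\sigma \mid y_i \in F^{\sigma}\}$ (for $i=1,\ldots,n$) and $\{\sigma \mid z \in (y_1,\ldots,y_n)^{\sigma}\}$ all belong to $\ms U$; any $\sigma$ in their common intersection satisfies $(y_1,\ldots,y_n) \subseteq F^\sigma$, hence $z \in (y_1,\ldots,y_n)^\sigma \subseteq (F^\sigma)^\sigma = F^\sigma$, whence $z \in F^{\star_{\ms U}} \subseteq E^{\star_{\ms U}}$. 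Once $\star_{\ms U} \in \SStarf(D)$ is established, membership in the ultrafilter limit set is immediate, since for $F \in \f(D)$ one has $\star_{\ms U} \in \texttt{V}_F \Leftrightarrow 1 \in F^{\star_{\ms U}} \Leftrightarrow \texttt{V}_F \cap \SStarf(D) \in \ms U$; Theorem \ref{spec-charact}(iv) then delivers the claim.
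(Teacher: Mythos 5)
Your argument is correct, and it works inside the same framework as the proof the paper points to (\cite[Theorem 2.13]{FiSp}): check that $\SStarf(D)$ is T$_0$ and apply the ultrafilter criterion of Theorem \ref{spec-charact} to the subbasis $\{\texttt{V}_F\cap\SStarf(D)\mid F\in\f(D)\}$. Where you genuinely diverge is in the mechanism producing the limit point: as remarked after Theorem \ref{prop:insfinss-spectral}, the cited proof uses in a crucial way the characterization \eqref{eq:sup} of the supremum of a family of finite-type operations, whereas you bypass \eqref{eq:sup} altogether by defining $\star_{\ms U}$ pointwise on $\f(D)$, extending by the finite-type rule, and verifying the axioms directly; the delicate point, idempotency, is settled by your finite-intersection argument, which is sound (the intersection of the finitely many witness sets lies in $\ms U$, hence is nonempty and is contained in $\{\sigma\mid z\in F^\sigma\}$, so that set also lies in $\ms U$). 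What each route buys: the paper's argument delegates the finiteness issue to the lemma behind \eqref{eq:sup}, while yours makes it explicit --- only finitely many elements $y_1,\dots,y_n$ are ever needed at a time --- which is the same finiteness phenomenon underlying \eqref{eq:sup}; accordingly your proof, like the paper's, does not extend to $\SStar(D)$, consistently with the remark following the theorem. Two minor points: your parenthetical claim that $\{\texttt{V}_F\cap\SStarf(D)\mid F\in\f(D)\}$ is a \emph{basis} is not justified (the $\texttt{V}_E$ are only a subbasis of $\SStar(D)$), but this is harmless since Theorem \ref{spec-charact}(iv) needs only a subbasis; and you should record that for $F\in\f(D)$ the direct definition of $F^{\star_{\ms U}}$ agrees with the finite-type extension, which follows from the monotonicity of $F\mapsto\{\sigma\mid z\in F^\sigma\}$.
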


The proof uses Theorem \ref{spec-charact}, so it is not constructive. 
 However, if  $A$ is a ring such that $\spec(A)\simeq\SStarf(D)$, we can  assume that:

 (a) $A_{\texttt{red}}$ (the reduced ring associated to $A$) is an integral domain (since $\SStarf(D)$ has a unique generic point), 
 
 (b)  $A_{\texttt{red}}$ (and $A$) is local (since $\SStarf(D)$ has a unique closed point), \;\; and 
 
 ({c})   $\dim(A) = \dim(A_{\texttt{red}}) \geq |\spec(D)|-1$ (see the following    Propositions \ref{homeo} and \ref{closed-inverse}).

\medskip
On the other hand, since the proof  of Theorem \ref{spec-charact} uses in a crucial way the characterization \eqref{eq:sup} of the supremum of a family of finite-type semistar operations, it cannot readily be adapted to $\SStar(D)$ and so, up to now, we do not  know whether $\SStar(D)$ is  a spectral space.
\medskip

We denote by $\SStarstab(D)$ (respectively, $\SStarstabft(D)$) the subset of $\SStar(D)$ consisting of all stable semistar operations (respectively, all stable semistar operations of finite type).

\begin{oss} \label{sp-st}
  (a) If we set $\SStarsp(D) := \{\star \in \SStar(D) \mid \star \mbox{ is spectral}\}$ 
(respectively, $\SStarspf(D) := \{\star \in \SStarf(D) \mid \star \mbox{ is spectral}\}$), 
then  by Remark \ref{spectral} $\SStarsp(D) \subseteq  \SStarstab(D)$, and the inclusion might be proper. However,  in the finite type case, we have equality \cite[Proposition 4.23(2)]{fohu}, i.e., $$
\SStarspf(D)=  \SStarstab(D) \cap \SStarf(D)=\SStarstabft(D).
$$

(b)  Let  $\Loc(D)$ and $\overrflat(D)$ be, respectively, the set of localizations of $D$ and the set of $D$-flat overrings of $D$ (and so $\Loc(D) \subseteq \overrflat(D)$).
We observe that the topological  embedding $\iota: \overr(D)\hookrightarrow  \SStarf(D)$, considered in Proposition \ref{immersione}(1),  restricts to a topological embedding $\iota_{\mbox{\it \tiny\texttt{Loc}}}:  \Loc(D) \hookrightarrow \SStarstabft(D)$ (or to a topological embedding $\iota_{\mbox{\it\tiny\texttt{flat}}}:\overrflat(D)\hookrightarrow \SStarstabft(D)$).

 On the opposite side, the map $\pi:\SStarf(D)\rightarrow \overr(D)$  (Proposition \ref{immersione}(2)) does not always restrict to a map $\SStarstabft(D)\rightarrow\overrflat(D)$, since not all intersection of localizations of $D$ are $D$-flat (see for instance   \cite[Section 3, page 441]{heinz-roit}).
\end{oss}

\medskip

Given a semistar operation $\star$ on $D$, we can always associate to $\star$ two semistar operations $\overline{\star}$ and $\widetilde{\star}$ on $D$ defined as follows: for each $E\in\FF(D)$,
\begin{equation*}
	\begin{array}{rl} E^{\overline{\star}}:= & \bigcup \{(E:I)\mid I \text{~nonzero ideal of~} D \text{~such that~} I^\star =D^\star\},\\
		E^{\tilde{\star}}:= & \bigcup \{ (E:J)\mid J \text{~nonzero finitely generated ideal of~} D \\
		& \hskip 70pt \text{~such that~}J^\star =D^\star\}.
	\end{array} 
\end{equation*}
It is easy to see that $\stt\boldsymbol{\preceq}\stu\boldsymbol{\preceq}\star$ and, moreover, that $\stu$ (respectively, $\stt$) is the largest stable (respectively, stable of finite type) semistar operation that precedes $\star$, called  \emph{ the stable} (respectively, the \emph{the finite type stable}) \emph{semistar operation associated to $\star$}. Therefore, $\star$ is stable (respectively, stable of finite type) if and only if $\star=\stu$ (respectively, $\star=\stt$) \cite[Proposition 3.7, Corollary 3.9]{fohu}. Note that, for each semistar operation $\star$, we always have $\widetilde{\star} = s_Y$, where $Y=\qmax^{\stf}(D)$  (cf. \cite[page 182, Proposition 4.3]{fohu}, \cite[Proposition 3.4(4)]{fo-lo-2003}, \cite[Remark 10]{folo-2006} and, for the star operation case, \cite[Corollary 2.10]{ac}).

\begin{prop}\label{prop:retraction}{\rm (cf.  \cite[Proposition 4.1]{FiFoSp-4} and \cite[Proposition 2.4]{FiSp})}
	Let $\Phi_{\mbox{\it\tiny \texttt{f}}}:\SStar(D)\rightarrow\SStar(D)$ (respectively, $\overline{\Phi}:\SStar(D)\rightarrow\SStar(D)$; $\widetilde{\Phi}:\SStar(D)\rightarrow \SStar(D)$) be the map defined by  $\star\mapsto\stf$ (respectively, $\star\mapsto\stu$; $\star\mapsto\stt$). Then:
	\begin{enumerate}
		\item[\rm(1)] The images of $\Phi_{{\mbox{\it\tiny \texttt{f}}}}$, $\overline{\Phi}$ and $\widetilde{\Phi}$ are, respectively, $\SStarf(D)$, $\SStarstab(D)$ and $\SStarstabft(D)$. 
		\item[\rm(2)] The maps  $\Phi_{\mbox{\it\tiny \texttt{f}}}$, $\overline{\Phi}$ and $\widetilde{\Phi}$ are continuous in the Zariski topology.
		\item[\rm(3)] The maps $\Phi_{{\mbox{\it\tiny \texttt{f}}}}$, $\overline{\Phi}$ and $\widetilde{\Phi}$ are topological retractions of $\SStar(D)$ onto their respective images.
	\end{enumerate}
\end{prop}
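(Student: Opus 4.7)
My plan is to prove the three parts in the stated order: (1) identifies the images by a direct idempotency argument; (2) is the substantive topological computation; (3) is then a formality.

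For part (1), the three defining formulas immediately give that $\stf$ (resp.\ $\stu$, $\stt$) is of finite type (resp.\ stable, stable of finite type), so the respective images sit inside $\SStarf(D)$, $\SStarstab(D)$, $\SStarstabft(D)$. Conversely, when $\star$ is already of finite type (resp.\ stable, stable of finite type), the characterization of $\stf$ (resp.\ $\stu$, $\stt$) as the largest operation of that type below $\star$, recalled just before the statement, forces $\star = \stf$ (resp.\ $\star = \stu$, $\star = \stt$). This simultaneously yields surjectivity onto the advertised image and the identity $\Phi_{\mbox{\it\tiny \texttt{f}}}|_{\SStarf(D)} = \mathrm{id}$ (and its analogues for $\overline{\Phi}$, $\widetilde{\Phi}$), which will be needed for (3).

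For (2) it suffices to verify that each map pulls back a subbasic open set $\texttt{V}_E = \{\star \mid 1 \in E^\star\}$, with $E \in \FF(D)$, to an open subset of $\SStar(D)$. For $\Phi_{\mbox{\it\tiny \texttt{f}}}$ the explicit formula for $\stf$ gives
\[
\Phi_{\mbox{\it\tiny \texttt{f}}}^{-1}(\texttt{V}_E) \;=\; \{\star \mid 1 \in F^\star \text{ for some } F \in \f(D),\ F \subseteq E\} \;=\; \bigcup\{\texttt{V}_F \mid F \in \f(D),\ F \subseteq E\},
\]
a union of subbasic opens. For $\overline{\Phi}$ and $\widetilde{\Phi}$ I would first record the elementary equivalence, valid for any nonzero ideal $J$ of $D$,
\[
J^\star = D^\star \;\Longleftrightarrow\; 1 \in J^\star \;\Longleftrightarrow\; \star \in \texttt{V}_J,
\]
where the first equivalence uses $J \subseteq D$ (so $J^\star \subseteq D^\star$ by $(\star_2)$) together with $(\star_1)$ and $(\star_3)$. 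Substituting this into the defining expressions of $\stu$ and $\stt$ and unpacking the condition $1 \in (E : I)$ as $I \subseteq E$ yields
\[
\overline{\Phi}^{-1}(\texttt{V}_E) \;=\; \bigcup\{\texttt{V}_I \mid I \text{ nonzero ideal of } D,\ I \subseteq E\},
\]
and the analogous union with $I$ restricted to finitely generated ideals for $\widetilde{\Phi}^{-1}(\texttt{V}_E)$. All three pullbacks are manifestly open.

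Part (3) is then automatic: each of $\Phi_{\mbox{\it\tiny \texttt{f}}}$, $\overline{\Phi}$, $\widetilde{\Phi}$ is continuous by (2), lands in the stated target by (1), and restricts to the identity on that target by the idempotency observation made in (1). I expect the only genuinely delicate point to be the double equivalence displayed above, which is what lets the nontrivial formulas for $\stu$ and $\stt$ collapse into unions of basic Zariski opens; everything else is routine bookkeeping with the semistar axioms.
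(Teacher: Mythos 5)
Your proof is correct, and it follows essentially the same route as the sources the survey cites for this statement (\cite[Proposition 2.4]{FiSp} and \cite[Proposition 4.1]{FiFoSp-4}): identify the images via the ``largest operation of the given type below $\star$'' characterizations recalled in the text, and verify continuity by computing $\Phi^{-1}(\texttt{V}_E)$ as a union of sets $\texttt{V}_F$ (respectively $\texttt{V}_I$, $\texttt{V}_J$) using the explicit formulas for $\stf$, $\stu$, $\stt$ together with the equivalence $1\in J^\star \Leftrightarrow J^\star=D^\star$ for ideals $J\subseteq D$. Since the restrictions to the images are the identity, the retraction statement follows at once, exactly as you say.
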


Another point of similarity between finite type, stable and spectral operations is given by the open sets needed to generate the Zariski topology,  induced by the Zariski topology of $\overr(D)$. 
Indeed, if $\star$ is of finite type, let $E \in \FF(D)$, and let $\star\in \texttt{V}_E$, that is, $1\in E^\star$, then there is a finitely generated submodule $F\subseteq E$ such that $1\in F^\star$, so that $\star\in \texttt{V}_F$; it follows that
\begin{equation*}
	\texttt{V}_E\cap\SStarf(D)=\bigcup\{\texttt{V}_F\cap\SStarf(D)\mid F\subseteq E,\, F\in\f(D)\}
\end{equation*}
and thus $\{\texttt{V}_F\cap\SStarf(D)\mid F\in\f(D)\}$ is a subbasis for the Zariski topology on $\SStarf(D)$. 
Similarly, if $\star$ is stable, then $1\in E^\star$ if and only if $1\in E^\star\cap D^\star=(E\cap D)^\star$. Therefore, the Zariski topology on $\SStarstab(D)$ is generated by the $\texttt{V}_I\cap\SStarstab(D)$, as $I$ ranges among the integral ideals of $D$. 
The same reasoning shows that $\{V_J\cap\SStarstabft(D)\mid J\subseteq D,\, J\in\f(D)\}$ is a subbasis for the Zariski topology on $\SStarstabft(D)$.
 This implies that stable semistar operations are completely determined by their action inside the ring.  In particular, if $\ast: \F(D) \rightarrow \F(D)$ is a stable {\sl star} operation, then there is a unique stable {\sl semistar} operation $\hat{\ast}: \FF(D) \rightarrow \FF(D)$ such that $\hat{\ast}|_{\F(D)}=\ast$.

 \begin{oss}
Note that  the  subbasic open sets $ \overline{\texttt{U}}_I := {\texttt{V}}_I \cap \SStarstab(D) = \{\star\in \SStar(D)\mid 1\in I^\star\} \cap \SStarstab(D)$ (respectively,    $ \widetilde{\texttt{U}}_I := {\texttt{V}}_I \cap \SStarstabft(D) = \{\star\in \SStar(D)\mid 1\in I^\star\} \cap \SStarstabft(D)$)
    of $\SStarstab(D)$ (respectively, of $\SStarstabft(D)$), where $I$ is an ideal of $D$,  form a basis of $\SStarstab(D)$ (respectively, $\SStarstabft(D)$), since   $\overline{\texttt{U}}_{I'} \cap \overline{\texttt{U}}_{I''}  = \overline{\texttt{U}}_{I'\cap I''}$ (respectively, 
    $\widetilde{\texttt{U}}_{I'} \cap \widetilde{\texttt{U}}_{I''}  = 
    \widetilde{\texttt{U}}_{I'\cap I''}$), for all 
    $I'$  and $I''$ ideals of $D$.
    
     On the other hand, when considering  finitely generated ideals $J$ of $D$, in general the $ \widetilde{\texttt{U}}_J $'s do not form a basis for the open sets in $\SStarstabft(D)$, since $\widetilde{\texttt{U}}_{J'} \cap \widetilde{\texttt{U}}_{J''}  = \widetilde{\texttt{U}}_{J'\cap J''}$,  and  $J'\cap J''$ is not necessarily  finitely generated, even if
    $J'$  and $J''$  are finitely generated ideals of $D$.
\end{oss}


 \medskip

 Besides the Zariski topology, we can also endow $\SStar(D)$ with possibly weaker topologies induced by the sets considered in the above paragraph,.

\begin{prop} {\em{(cf. \cite[Proposition 2.1 and Remark 2.2]{FiSp})}} 
	Preserve the notation of Propostion \ref{prop:retraction}, and endow $\SStar(D)$ with the topology generated by 
	$\{\mbox{\rm\texttt{V}}_F\mid F\in\f(D)\}$
	(respectively, $\{\mbox{\rm\texttt{V}}_I\mid I \mbox{ ideal in } D\}$; 
	$\{\mbox{\rm\texttt{V}}_J\mid J\subseteq D,\  J\in\f(D)\}$). 
	Then, $\Phi_{{\mbox{\it\tiny \texttt{f}}}}$ (respectively, $\overline{\Phi}$; $\widetilde{\Phi}$) is the Kolmogoroff quotient of $\SStar(D)$ onto $\SStarf(D)$ (respectively, $\SStarstab(D)$; $\SStarstabft(D)$),   i.e., it is the canonical map to the quotient by the equivalence relation of ``topological indistinguishability''  (where two points of a topological space are topologically indistinguishable if they have exactly the same neighborhoods). 
	\end{prop}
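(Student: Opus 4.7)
The plan is to verify, for each of the three cases, the following four properties which together show that $\Phi$ realizes the Kolmogoroff quotient of $\SStar(D)$ onto its image $X$: (i) $\Phi|_X=\mathrm{id}_X$; (ii) $\Phi$ is continuous when $\SStar(D)$ carries the stated reduced topology and $X$ carries the corresponding subspace topology; (iii) $\Phi(\star_1)=\Phi(\star_2)$ if and only if $\star_1,\star_2$ are topologically indistinguishable in $\SStar(D)$; (iv) the subspace topology on $X$ coincides with the quotient topology induced by $\Phi$. Claim (i) is a reformulation of Proposition~\ref{prop:retraction}(1) and (3), and (iv) is automatic once (i) and (ii) are known: since $\Phi|_X=\mathrm{id}_X$, any $U\subseteq X$ satisfies $U=\Phi^{-1}(U)\cap X$, so continuity gives that $U$ open implies $\Phi^{-1}(U)$ open, while this equality gives the reverse implication. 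The T$_0$-ness of $X$ that is needed to conclude that the Kolmogoroff quotient is actually realized follows either by direct inspection of the subbasis or from the scaling argument below.

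For (ii) in the finite-type case, the preimage of a subbasic open set $\texttt{V}_F\cap\SStarf(D)$, with $F\in\f(D)$, equals $\{\star\in\SStar(D)\mid 1\in F^{\stf}\}=\{\star\mid 1\in F^\star\}=\texttt{V}_F$, since $\stf$ coincides with $\star$ on $\f(D)$. For the stable case, the preimage of $\texttt{V}_I\cap\SStarstab(D)$ with $I\subseteq D$ a nonzero ideal is $\{\star\mid 1\in I^{\stu}\}$; applying the formula $E^{\stu}=\bigcup\{(E:J)\mid J\text{ nonzero ideal of }D,\ J^\star=D^\star\}$ to $E=I$, together with the elementary observation that for $I\subseteq D$ one has $1\in I^\star$ iff $I^\star=D^\star$, shows that this preimage equals $\texttt{V}_I$. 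The stable-of-finite-type case is identical, restricted to $J\in\f(D)$ with $J\subseteq D$.

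The heart of the proof is (iii). For $\Phi_f$, $\stf_1=\stf_2$ means $F^{\star_1}=F^{\star_2}$ for every $F\in\f(D)$, and the reduction to the ``$1\in F^\star$'' condition rests on the homogeneity axiom $(\mathbf{\star_4})$: for every $0\neq x\in K$ one has $x\in F^{\star_i}\iff 1\in(x^{-1}F)^{\star_i}$, and $x^{-1}F\in\f(D)$. Consequently $\stf_1=\stf_2$ iff for every $F\in\f(D)$ the equivalence $1\in F^{\star_1}\iff 1\in F^{\star_2}$ holds, which is precisely topological indistinguishability with respect to $\{\texttt{V}_F\mid F\in\f(D)\}$. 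For $\overline{\Phi}$ and $\widetilde{\Phi}$, the explicit formulas for $\stu$ and $\stt$ show that each of these operations is entirely determined by the collection of (finitely generated, in the case of $\stt$) integral ideals $I$ with $I^\star=D^\star$, which, by the continuity computation above, is precisely the information encoded by the subbasis. It follows that $\stu_1=\stu_2$ iff $\star_1$ and $\star_2$ belong to exactly the same sets $\texttt{V}_I$ as $I$ varies over integral ideals of $D$, i.e.\ iff they are topologically indistinguishable, and analogously for $\stt$.

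The main obstacle is (iii): in the finite-type case it requires the $(\mathbf{\star_4})$-scaling trick to pass from the single predicate ``$1\in F^\star$'' to full knowledge of $F^\star$, and in the stable cases it relies on the explicit determination of $\stu$ and $\stt$ by their behaviour on the integral ideals. Once (iii) is in hand, the identification of $\Phi$ with the Kolmogoroff quotient follows by general topology.
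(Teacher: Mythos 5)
Your proposal is correct. Note that the survey itself does not include a proof of this proposition --- it is quoted from \cite[Proposition 2.1 and Remark 2.2]{FiSp} --- and the only ingredient recorded in the text is the observation, in the paragraph immediately preceding the statement, that $\{\texttt{V}_F\cap\SStarf(D)\mid F\in\f(D)\}$ (respectively, the analogous families indexed by integral ideals, or by finitely generated integral ideals) is a subbasis for the Zariski topology on the image. Your decomposition into (i)--(iv) is the natural way to supply the omitted argument, and the computations you isolate are exactly the right ones: $1\in F^{\stf}\iff 1\in F^\star$ for $F\in\f(D)$; $1\in I^{\stu}\iff 1\in I^{\star}$ for a nonzero integral ideal $I$ (and likewise for $\stt$ with $I$ finitely generated), which follows since for $I\subseteq D$ one has $1\in I^\star\iff I^\star=D^\star$; the $(\mathbf{\star_4})$-scaling trick recovering $F^{\star}$ from the predicates $1\in(x^{-1}F)^\star$; and the determination of $\stu$ and $\stt$ by the set $\{I\mid I^\star=D^\star\}$ via their defining formulas. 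One small presentational remark: your step (iv) identifies the quotient topology with the subspace topology that the \emph{coarsened} topology of $\SStar(D)$ induces on the image; to read the statement with $\SStarf(D)$ (respectively $\SStarstab(D)$, $\SStarstabft(D)$) carrying its usual Zariski topology, one must also use the subbasis observation from the preceding paragraph of the paper, which says precisely that these two subspace topologies coincide. Since that observation is stated in the text and combines immediately with your preimage computations $\Phi^{-1}(\texttt{V}_F\cap\SStarf(D))=\texttt{V}_F$ (and its analogues), this is not a gap, only a point worth making explicit.
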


  Let   $Y \subseteq \spec(D)$ be a nonempty set defining a spectral semistar operation. Then  its closure, in the inverse topology  (denoted by $\Cl^{\tiny\texttt{inv}}(Y)$, see (1.3)),  provides some useful information about $s_Y$.

\begin{prop}\label{spettrali} {\em{(cf. \cite[Corollaries 4.4 and 5.2, Proposition 5.1]{FiSp}   and \cite[Lemma 4.2 and Remark 4.5]{fohu}})}
	Let $D$ be an integral domain and   let $Y$ and $Z$ be two nonempty subsets of $\spec(D)$. The following statements hold.  
	\begin{enumerate}
	\item[\rm (1)]   $s_Y = s_Z$ if and only if $Y^{\mbox{\rm\tiny\texttt{gen}}} = Z^{\mbox{\rm\tiny\texttt{gen}}}$.
		\item[\rm (2)] $s_Y$ is of finite type if and only if $Y$ is quasi-compact. 
		
		\item[\rm (3)]  $\widetilde{s_Y}=\widetilde{s_Z}$ if and only if   ${\mbox{\rm$\Cl$}}^{\mbox{\tiny\rm\texttt{inv}}}(Y) ={\mbox{\rm$\Cl$}}^{\mbox{\tiny\rm\texttt{inv}}}(Z)$.		
		\item[\rm (4)] $\widetilde{s_Y}=s_{{\mbox{\tiny\rm$\Cl$}}^{\mbox{\tiny\rm\texttt{inv}}}(Y)}$.
			\end{enumerate}
\end{prop}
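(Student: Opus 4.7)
The plan is to prove the four statements in the order (1), then the forward direction of (2), then (4), then the converse of (2), and finally (3), since each part feeds into the next.

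For (1), the key observation is that $P\subseteq Q$ in $\spec(D)$ forces $D_Q\subseteq D_P$, and hence $ED_Q\subseteq ED_P$ for every $E\in\FF(D)$. Consequently, enlarging $Y$ to $Y^{\mbox{\tiny\texttt{gen}}}$ does not affect the intersection $\bigcap_{P\in Y}ED_P$, so $s_Y=s_{Y^{\mbox{\tiny\texttt{gen}}}}$, proving the ``if'' direction. For the converse, I would show that $Y^{\mbox{\tiny\texttt{gen}}}$ is recovered from $s_Y$ as the quasi-$s_Y$-spectrum of $D$: if $P\in Y^{\mbox{\tiny\texttt{gen}}}$, pick $Q\in Y$ with $P\subseteq Q$, so that $P^{s_Y}\cap D\subseteq PD_Q\cap D=P$; if $P\notin Y^{\mbox{\tiny\texttt{gen}}}$, then $PD_Q=D_Q$ for every $Q\in Y$, hence $1\in P^{s_Y}$ and $P$ is not a quasi-$s_Y$-prime.

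For the forward direction of (2), observe that $\spec(D)\to\overr(D)$, $P\mapsto D_P$, is continuous in the Zariski topologies, so if $Y$ is quasi-compact then $\boldsymbol{\mathcal{L}}(Y)$ is quasi-compact in $\overr(D)$, and Corollary \ref{qcover} yields that $s_Y=\wedge_{\boldsymbol{\mathcal{L}}(Y)}$ is of finite type. For (4), the inequality $s_{\Cl^{\mbox{\tiny\texttt{inv}}}(Y)}\boldsymbol{\preceq}\widetilde{s_Y}$ is obtained as follows: $\Cl^{\mbox{\tiny\texttt{inv}}}(Y)$ is inverse-closed, hence quasi-compact in the Zariski topology (as discussed in Section 1.3), so $s_{\Cl^{\mbox{\tiny\texttt{inv}}}(Y)}$ is of finite type by the already-proved direction of (2); it is also stable since spectral (Remark \ref{spectral}), and $\boldsymbol{\preceq}s_Y$ because $Y\subseteq\Cl^{\mbox{\tiny\texttt{inv}}}(Y)$. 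The maximality of $\widetilde{s_Y}$ among finite-type stable operations preceding $s_Y$ gives the inequality. For the reverse, write $\widetilde{s_Y}=s_W$ (possible since $\widetilde{s_Y}$ is stable of finite type, hence spectral by Remark \ref{spectral}); by (1) we may take $W$ generization-closed, and the identity $\widetilde{s_Y}=s_{\qmax^{(s_Y)_f}(D)}$ recalled in the text shows that $W$ is quasi-compact, hence inverse-closed. Since $\widetilde{s_Y}\boldsymbol{\preceq}s_Y$ forces $W\supseteq Y^{\mbox{\tiny\texttt{gen}}}$ via the quasi-spectrum description of (1), we deduce $W\supseteq\Cl^{\mbox{\tiny\texttt{inv}}}(Y)$ and thus $\widetilde{s_Y}=s_W\boldsymbol{\preceq}s_{\Cl^{\mbox{\tiny\texttt{inv}}}(Y)}$.

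The converse of (2) now follows: if $s_Y$ is of finite type then $s_Y=\widetilde{s_Y}=s_{\Cl^{\mbox{\tiny\texttt{inv}}}(Y)}$, so by (1) $Y^{\mbox{\tiny\texttt{gen}}}=\Cl^{\mbox{\tiny\texttt{inv}}}(Y)$, which is inverse-closed and hence quasi-compact. Finally (3) is immediate from (4) and (1): $\widetilde{s_Y}=\widetilde{s_Z}$ iff $s_{\Cl^{\mbox{\tiny\texttt{inv}}}(Y)}=s_{\Cl^{\mbox{\tiny\texttt{inv}}}(Z)}$ iff $\Cl^{\mbox{\tiny\texttt{inv}}}(Y)=\Cl^{\mbox{\tiny\texttt{inv}}}(Z)$, the last equivalence because inverse-closed sets coincide with their own generization-closures. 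The main obstacle will be the reverse inequality in (4): both the quasi-compactness of $\qmax^{(s_Y)_f}(D)$ and the translation of $\widetilde{s_Y}\boldsymbol{\preceq}s_Y$ into the inclusion $W\supseteq Y^{\mbox{\tiny\texttt{gen}}}$ require careful use of the dictionary between spectral operations and generization-closed subsets of $\spec(D)$ established in (1).
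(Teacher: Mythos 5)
Your overall architecture is sound, and most of the steps check out: (1) is correct (modulo the harmless remark that the zero prime lies in every generization-closure but is excluded from $\qspec^{s_Y}(D)$ by definition), the forward half of (2) via continuity of $P\mapsto D_P$ and Corollary \ref{qcover} is fine, the deduction of the converse of (2) and of (3) from (4) and (1) is fine, and in (4) the inequality $s_{\Cl^{\texttt{inv}}(Y)}\boldsymbol{\preceq}\widetilde{s_Y}$ is argued correctly. (Note that the paper itself gives no proof of this proposition; it quotes \cite{FiSp} and \cite{fohu}, so the comparison is with the cited arguments.) The genuine gap is exactly the point you defer in the reverse inequality of (4): you need $W=(\qmax^{(s_Y)_{\!_f}}(D))^{\texttt{gen}}$ to be quasi-compact, and this does \emph{not} follow from the identity $\widetilde{s_Y}=s_{\qmax^{(s_Y)_{\!_f}}(D)}$ nor from the ``dictionary'' of (1); it needs a separate finiteness argument. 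Concretely: if $\{\texttt{D}(f_\lambda)\}_\lambda$ covers $\qmax^{(s_Y)_{\!_f}}(D)$ and $I$ is the ideal generated by the $f_\lambda$, then $1\in I^{(s_Y)_{\!_f}}$ (otherwise $I^{(s_Y)_{\!_f}}\cap D$ would be a proper quasi-ideal, hence contained in some maximal quasi-ideal missed by the cover), so by the finite type property $1\in J^{(s_Y)_{\!_f}}$ for some finitely generated $J\subseteq(f_{\lambda_1},\dots,f_{\lambda_n})$, and then $\texttt{D}(f_{\lambda_1}),\dots,\texttt{D}(f_{\lambda_n})$ already cover $\qmax^{(s_Y)_{\!_f}}(D)$. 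Without this (or an equivalent), your chain $W$ inverse-closed $\Rightarrow$ $\Cl^{\texttt{inv}}(Y)\subseteq W$ does not get off the ground. A second, smaller point: you twice use ``inverse-closed $\Rightarrow$ quasi-compact'', which is true but is not stated in Section 1.3; justify it via the constructible topology (an inverse-closed set is an intersection of quasi-compact open sets, hence closed, hence compact, in the constructible topology, which is finer than the Zariski topology).

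There is also a cleaner route for the inequality $\widetilde{s_Y}\boldsymbol{\preceq}s_{\Cl^{\texttt{inv}}(Y)}$, essentially the one behind the cited results, which bypasses $\qmax$ entirely: if $x\in(E:J)$ with $J$ a nonzero finitely generated ideal such that $J^{s_Y}=D^{s_Y}$, then $JD_Q=D_Q$ for every $Q\in Y$, i.e.\ $Y\subseteq\texttt{D}(J)$; since $\texttt{D}(J)$ is quasi-compact and open, $\Cl^{\texttt{inv}}(Y)\subseteq\texttt{D}(J)$, so for every $P\in\Cl^{\texttt{inv}}(Y)$ one has $JD_P=D_P$ and hence $x\in xJD_P\subseteq ED_P$. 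This gives $E^{\widetilde{s_Y}}\subseteq E^{s_{\Cl^{\texttt{inv}}(Y)}}$ directly from the definition of $\widetilde{\star}$, and together with your argument for the other inequality it yields (4), after which your derivations of the converse of (2) and of (3) go through unchanged.
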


 Note that, in general, $(s_Y)_{\!_{f}}$ is quasi-spectral but not spectral, and it is spectral if and only if $(s_Y)_{\!_{f}}$ is stable \cite[Proposition 4.23(2)]{fohu}. In other words, it is possible that $\widetilde{s_Y} \lneq (s_Y)_{\!_{f}}$  (see \cite[Remark 5.3]{FiSp} and \cite[page 2466]{ac}) and thus it is not true in general that $ (s_Y)_{\!_{f}} = s_{{\mbox{\tiny\rm$\Cl$}}^{\texttt{inv}}(Y)}$.

\medskip

The following result provides control of the infimum and the supremum of a family of spectral operations:

\begin{lemma}\label{prop:infsup}  {\rm (cf.   \cite[Lemma 4.3]{FiFoSp-4})}
	Let $\mathscr{D}$ be a nonempty set of spectral semistar operations on an integral domain $D$. For each spectral semistar operation $\star$, set $\Delta(\star):={\mbox{\rm\qspec}}^\star(D)$. Then, the following statements hold.
	\begin{enumerate}[\rm (1)]
		\item\label{prop:infsup:inf} $\wedge_{\mathscr{D}}$ is spectral with $\Delta(\wedge_{\mathscr{D}})=\bigcup\{\Delta(\star)\mid\star\in\mathscr{D}\}$.
		\item\label{prop:infsup:sup} If $\vee_\mathscr{D}$ is quasi-spectral, then it is spectral with $\Delta(\vee_{\mathscr{D}})=\bigcap\{\Delta(\star)\mid\star\in\mathscr{D}\}$.
	\end{enumerate}
\end{lemma}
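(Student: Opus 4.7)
The plan is based on a single observation about spectral operations: for any $Y\subseteq\spec(D)$ one has $\Delta(s_Y)=Y^{\mbox{gen}}$. Indeed, if $P\subseteq Q$ for some $Q\in Y$ then $PD_Q\cap D=P$ gives $P^{s_Y}\cap D\subseteq P$; if $P$ is contained in no $Q\in Y$ then $PD_Q=D_Q$ for every $Q\in Y$, so $P^{s_Y}\cap D=D$ and $P$ is not a quasi-$s_Y$-prime. The chain $D_{P'}\supseteq D_Q$ for $P'\subseteq Q$ also makes the intersection $\bigcap_{Q\in Y}ED_Q$ invariant under generization of $Y$, so $s_Y=s_{Y^{\mbox{gen}}}=s_{\Delta(s_Y)}$; in particular, each spectral $\star$ is reconstructible as $s_{\Delta(\star)}$, and $\Delta(\star)$ is always closed under generizations.

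Part (1) then follows by direct computation: writing each $\star\in\mathscr{D}$ as $s_{\Delta(\star)}$ yields
\[
E^{\wedge_{\mathscr{D}}}=\bigcap_{\star\in\mathscr{D}}\bigcap_{Q\in\Delta(\star)}ED_Q=\bigcap_{Q\in Y}ED_Q,\qquad Y:=\bigcup_{\star\in\mathscr{D}}\Delta(\star),
\]
so $\wedge_{\mathscr{D}}=s_Y$ is spectral; being a union of generization-closed sets, $Y$ is generization-closed, hence $\Delta(\wedge_{\mathscr{D}})=Y^{\mbox{gen}}=Y$.

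For part (2), set $Y:=\bigcap_{\star\in\mathscr{D}}\Delta(\star)$ and $\sigma:=s_Y$. Since $Y\subseteq\Delta(\star)$ for each $\star\in\mathscr{D}$, the intersection defining $\sigma$ involves no more factors than the one defining $\star=s_{\Delta(\star)}$, so $\star\leq\sigma$; this makes $\sigma$ an upper bound and gives $\vee_{\mathscr{D}}\leq\sigma$. Conversely, any $P\in\Delta(\vee_{\mathscr{D}})$ satisfies $P\subseteq P^\star\cap D\subseteq P^{\vee_{\mathscr{D}}}\cap D=P$ for every $\star\in\mathscr{D}$, so $\Delta(\vee_{\mathscr{D}})\subseteq Y$.

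The crux, and the main obstacle, is the reverse inequality $\sigma\leq\vee_{\mathscr{D}}$, which is precisely where the quasi-spectral hypothesis is essential (without any stability on $\vee_{\mathscr{D}}$ there is no filter/localization machinery to invoke). The argument routes through the residual ideal: given $0\neq x\in E^\sigma$, set $J:=(E:_D x)$; unfolding $x\in ED_Q$ for each $Q\in Y$ shows $J\not\subseteq Q$ for any $Q\in Y$. The quasi-$\vee_{\mathscr{D}}$-ideal $J^{\vee_{\mathscr{D}}}\cap D$ cannot then be proper, for otherwise quasi-spectrality would produce a quasi-$\vee_{\mathscr{D}}$-prime $P\supseteq J^{\vee_{\mathscr{D}}}\cap D$, which would lie in $\Delta(\vee_{\mathscr{D}})\subseteq Y$ and contradict $J\not\subseteq P$. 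Hence $1\in J^{\vee_{\mathscr{D}}}$, and the semistar axiom $(xJ)^{\vee_{\mathscr{D}}}=x\cdot J^{\vee_{\mathscr{D}}}$ together with $xJ\subseteq E$ gives $x\in E^{\vee_{\mathscr{D}}}$. This yields $\vee_{\mathscr{D}}=\sigma=s_Y$, and then $\Delta(\vee_{\mathscr{D}})=Y^{\mbox{gen}}=Y$ by the preliminary observation of the first paragraph.
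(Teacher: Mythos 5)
Your proof is correct. The survey records this lemma only with a citation to \cite{FiFoSp-4} (in preparation), so there is no in-text proof to compare against; your route is the natural one and relies only on facts available in the paper, namely $s_Y=s_{Y^{\mathrm{gen}}}$ (Proposition \ref{spettrali}(1)), the explicit formula for infima, and the definition of quasi-spectrality. The three key steps all check out: $\Delta(s_Y)=Y^{\mathrm{gen}}$, so every spectral $\star$ equals $s_{\Delta(\star)}$ and $\Delta(\star)$ is generization-closed; the computation identifying $\wedge_{\mathscr{D}}$ with $s_{\bigcup\{\Delta(\star)\mid\star\in\mathscr{D}\}}$; and, for the supremum, the conductor argument, where for $0\neq x\in E^{s_Y}$ with $Y:=\bigcap\{\Delta(\star)\mid\star\in\mathscr{D}\}$ the nonzero ideal $J:=(E:_D x)$ is contained in no member of $Y$, every quasi-$\vee_{\mathscr{D}}$-prime lies in $Y$, so quasi-spectrality forces $1\in J^{\vee_{\mathscr{D}}}$ and axiom $(\mathbf{\star_4})$ gives $x\in E^{\vee_{\mathscr{D}}}$. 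The only point to polish is the degenerate case $Y=\emptyset$, where $s_Y$ is not literally defined: your conductor argument applies verbatim (the condition involving $Y$ is vacuous) and yields $E^{\vee_{\mathscr{D}}}=K$ for every $E\in\FF(D)$, i.e.\ $\vee_{\mathscr{D}}=\wedge_{\{K\}}=s_{\{(0)\}}$, which is spectral and has no nonzero quasi-primes, so the statement still holds; this is consistent with the paper's own treatment of the zero ideal in the example following the lemma.
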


Note that the hypothesis that $\vee_\mathscr{D}$ be quasi-spectral in point \ref{prop:infsup:sup} is necessary: for example, if $\insA$ is the ring of all algebraic integers, $\star_P:=s_{\Max(\insA)\setminus\{P\}}$ and $\mathscr{D}:=\{\star_P\mid P\in\Max(\insA)\} \subseteq \SStarsp(\insA)$, then $\vee_{\mathscr{D}}$ is a semistar operation that closes $\mathbb{A}$ and thus closes every principal ideal of $\mathbb{A}$, while $\qspec^{\vee_{\mathscr{D}}}(D)=\{(0)\}$, hence $\vee_{\mathscr{D}}$ is not quasi-spectral.  (See  \cite[Example 4.4]{FiFoSp-4} for more details.)

\smallskip
Lemma \ref{prop:infsup}\ref{prop:infsup:sup} provides useful information on the supremum of a family of spectral semistar operations of  finite type, allowing one  to prove that the space of all stable semistar operations of finite type is spectral. The proof of the following theorem follows closely the one of Theorem \ref{prop:insfinss-spectral}.

\begin{teor}\label{stabili}{\rm (cf. \cite[Theorem 4.5]{FiFoSp-4})}
	Let $D$ be an integral domain. Then, $\SStarstabft(D)$ is a spectral space.
\end{teor}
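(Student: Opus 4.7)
The plan is to mimic the proof of Theorem \ref{prop:insfinss-spectral} by applying Theorem \ref{spec-charact}(iv) to the smaller space $\SStarstabft(D)$. First, $\SStarstabft(D)$ is T$_0$, being a subspace of $\SStar(D)$, which is T$_0$. As a subbasis of open sets we take the collection
$$
\widetilde{\boldsymbol{\mathcal{S}}}:=\{\widetilde{\texttt{U}}_J\mid J\in\f(D),\ J\subseteq D\}
$$
which, as noted in the discussion following Proposition \ref{prop:retraction}, generates the Zariski topology on $\SStarstabft(D)$. Thus it suffices to show that, for every ultrafilter $\ms U$ on $\SStarstabft(D)$, the ultrafilter limit set $\SStarstabft(D)_{\widetilde{\boldsymbol{\mathcal{S}}}}(\ms U)$ is nonempty, that is, to exhibit $\star^*\in\SStarstabft(D)$ satisfying $1\in J^{\star^*}\iff \widetilde{\texttt{U}}_J\in\ms U$ for all finitely generated $J\subseteq D$.

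The natural candidate is constructed from the family
$$
\mathcal{F}^*:=\{J\in\f(D)\mid J\subseteq D,\ \widetilde{\texttt{U}}_J\in\ms U\},
$$
by letting $\mathcal{G}^*$ be the collection of all integral ideals of $D$ containing some element of $\mathcal{F}^*$, and setting
$$
E^{\star^*}:=\bigcup\{(E:J)\mid J\in\mathcal{F}^*\},\qquad E\in\FF(D).
$$
The heart of the proof consists in verifying that $\mathcal{G}^*$ is a localizing system of finite type, so that $\star^*$ is the stable semistar operation of finite type naturally associated to it. Upward closure under inclusions and the finite type condition are built into the construction; the nontrivial condition is closure under the operation $J_1,J_2\mapsto J_1J_2$, which is a consequence of the stability of the operations lying in $\widetilde{\texttt{U}}_{J_1}\cap\widetilde{\texttt{U}}_{J_2}$. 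Indeed, for any stable $\star$, the relation $1\in J_i^\star$ forces $J_i^\star=D^\star$ ($i=1,2$), and a routine argument using stability and the finite-type property yields $\widetilde{\texttt{U}}_{J_1}\cap\widetilde{\texttt{U}}_{J_2}\subseteq\widetilde{\texttt{U}}_{J_1J_2}$; since the left-hand side belongs to $\ms U$, so does $\widetilde{\texttt{U}}_{J_1J_2}$, proving $J_1J_2\in\mathcal{F}^*$. The localizing-system axiom (LS2) is then checked by a similar ultrafilter argument.

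It remains to verify that $\star^*$ realizes the ultrafilter limits on subbasic opens. For $J\in\f(D)$ with $J\subseteq D$, one has $1\in J^{\star^*}$ if and only if $1\in(D:J')$ for some $J'\in\mathcal{F}^*$ with $J'\subseteq J$. The implication $(\Leftarrow)$ is immediate: $J'\subseteq J$ gives $\widetilde{\texttt{U}}_{J'}\subseteq\widetilde{\texttt{U}}_J$, so $\widetilde{\texttt{U}}_{J'}\in\ms U$ forces $\widetilde{\texttt{U}}_J\in\ms U$. The implication $(\Rightarrow)$ is trivial (take $J'=J$). Hence $\star^*\in\SStarstabft(D)_{\widetilde{\boldsymbol{\mathcal{S}}}}(\ms U)$, and Theorem \ref{spec-charact}(iv) yields that $\SStarstabft(D)$ is spectral.

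The main obstacle is the verification that $\mathcal{G}^*$ is a localizing system of finite type: this is where one must exploit simultaneously the ultrafilter properties of $\ms U$, the stability of the operations in $\SStarstabft(D)$, and the fact that the subbasis consists only of sets $\widetilde{\texttt{U}}_J$ with $J$ finitely generated (so that finite intersections of subbasic opens, which \emph{a priori} give the inclusion $\widetilde{\texttt{U}}_{J_1\cap J_2}\in\ms U$ with $J_1\cap J_2$ possibly non-finitely-generated, must be replaced by $\widetilde{\texttt{U}}_{J_1J_2}$ via the stability trick above). Once this is in place, the rest of the argument is a straightforward parallel of the proof of Theorem \ref{prop:insfinss-spectral}.
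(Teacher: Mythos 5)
Your proposal is correct, and it shares the paper's overall skeleton — check that the space is T$_0$ and apply the ultrafilter criterion of Theorem \ref{spec-charact}(iv) to the subbasis $\{\widetilde{\texttt{U}}_J \mid J\in\f(D),\ J\subseteq D\}$ — but it produces the ultrafilter limit point by a genuinely different mechanism. The proof the paper refers to (\cite[Theorem 4.5]{FiFoSp-4}) is modelled on that of Theorem \ref{prop:insfinss-spectral} and leans on Lemma \ref{prop:infsup}(2), i.e.\ on controlling suprema of \emph{spectral} operations (through their quasi-prime spectra $\Delta(\star)$), whereas you build the limit point directly as the stable operation $E\mapsto\bigcup\{(E:J)\mid J\in\mathcal F^*\}$ attached to the localizing system generated by $\mathcal F^*=\{J\in\f(D)\mid J\subseteq D,\ \widetilde{\texttt{U}}_J\in\ms U\}$, invoking the Gabriel--Popescu dictionary; since the bijection $\locsistf(D)\leftrightarrow\SStarstabft(D)$ comes from \cite{fohu} and not from Theorem \ref{stabili}, there is no circularity. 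Your crucial claim $\widetilde{\texttt{U}}_{J_1}\cap\widetilde{\texttt{U}}_{J_2}\subseteq\widetilde{\texttt{U}}_{J_1J_2}$ is indeed valid, and only stability is needed: for stable $\star$ and finitely generated $J_1$ one has $(J_1J_2:J_1)^\star=((J_1J_2)^\star:J_1)$, so $1\in J_2^\star$ and $J_2\subseteq(J_1J_2:J_1)$ give $J_1\subseteq(J_1J_2)^\star$, hence $1\in J_1^\star\subseteq(J_1J_2)^\star$. Once product-closure of $\mathcal F^*$ is established, axiom (LS2) for its upward closure follows formally from it (no further ultrafilter argument is required), and in fact you could bypass localizing systems altogether by checking directly that product-closure makes your explicit formula idempotent, stable and of finite type. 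Two small repairs: the condition ``$1\in(D:J')$'' should read $1\in(J:J')$, i.e.\ $J'\subseteq J$, and it is stability rather than the finite-type property that drives the key inclusion. As for what each route buys: the paper's argument via Lemma \ref{prop:infsup} keeps track of the sets $\Delta(\star)$ and feeds naturally into the identification $\SStarstabft(D)\simeq\xcal(\spec(D))$ of Proposition \ref{closed-inverse} (which yields a second proof of the theorem), while your argument is more ideal-theoretic and, in passing, essentially re-proves the part of the homeomorphism $\locsistf(D)\simeq\SStarstabft(D)$ of Theorem \ref{locsist-stable} that is relevant to spectrality.
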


Stable semistar operations are closely related to the concept of \emph{localizing systems}, in the sense of Gabriel-Popescu (cf. for instance \cite[Chap. II]{BAC}, \cite{ga,c,la,s}). Recall that a localizing system on $D$ is a subset $\mathcal{F}$ of ideals of $D$ such that:
\begin{itemize}
	\item if $I\in\mathcal{F}$ and  $J$ is an ideal of $D$ such that $I\subseteq J$, then $J\in\mathcal{F}$;
	\item if $I\in\mathcal{F}$  and  $J$ is an ideal of $D$ such that, for each $i\in I$, $(J:_D iD)\in\mathcal{F}$, then $J\in\mathcal{F}$.
\end{itemize}
A localizing system $\mathcal{F}$ is said to be \emph{of finite type} if for each $I\in\mathcal{F}$ there exists a nonzero finitely generated ideal $J\in\mathcal{F}$ such that $J\subseteq I$. For instance, if $T$ is an overring of $R$, $\mathcal{F}(T):=\{I \mid I\text{~ideal of~}D,IT=T\}$ is a
localizing system of finite type, while, if $V$ is a valuation domain with a nonzero idempotent prime ideal $P$, then
$\hat{\mathcal{F}}( P): =\{I \mid I\text{~ideal of~} V\text{~and~}I\supseteq P\}$ is a localizing system of $V$ which is not of finite type
 \cite[Proposition 5.1.12 and Remark 5.1.13]{fohupa}. We denote by $\locsist(D)$ (respectively, $\locsistf(D)$) the set of all localizing systems (respectively, localizing systems of finite type) on $D$. We can introduce on these sets a natural topology, that we still call the \emph{Zariski topology}, whose subbasic open sets are the $\texttt{W}_I:=\{\mathcal{F}\in \locsist(D)\mid I\in\mathcal{F}\}$, as $I$ varies among the ideals in $D$.

\begin{teor} \label{locsist-stable} {\rm (cf.\!  \cite[Proposition 3.5, Proposition 4.1(5) and Corollary 4.6 ]{FiFoSp-4})}
	Let $D$ be an integral domain. The map $\boldsymbol{\lambda}:\locsist(D)\rightarrow \SStarstab(D)$ (respectively, the map $\boldsymbol{\lambda}_{{_f}}:\locsistf(D)\rightarrow \SStarstabft(D)$),  defined by $\mathcal{F}\mapsto\star_{\mathcal{F}}$, establishes a homeomorphism between spaces endowed with the Zariski topologies (respectively, the induced topologies from the Zariski topologies). In particular, by Theorem \ref{stabili}, $\locsistf(D)$ is a spectral space. 
\end{teor}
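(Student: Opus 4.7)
The plan is to exhibit an explicit inverse $\boldsymbol{\mu}$ to $\boldsymbol{\lambda}$ and then check that both maps pull back subbasic open sets to subbasic open sets. I would define $\boldsymbol{\lambda}$ by the standard formula
\begin{equation*}
E^{\star_{\mathcal{F}}} := \bigcup \{ (E : J) \mid J \in \mathcal{F}\}, \qquad E \in \FF(D),
\end{equation*}
and, in the reverse direction, $\boldsymbol{\mu}(\star) := \mathcal{F}(\star) := \{ I \text{ nonzero ideal of } D \mid 1 \in I^\star\}$. A routine check (using the two localizing-system axioms for $\mathcal{F}$, and stability together with $D \subseteq D^\star$ and the observation that $1 \in I^\star$ is equivalent to $I^\star = D^\star$) shows that $\star_{\mathcal{F}}$ is a stable semistar operation and that $\mathcal{F}(\star)$ is a localizing system, and that both constructions preserve the finite-type property.

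Next I would verify that $\boldsymbol{\lambda}$ and $\boldsymbol{\mu}$ are mutual inverses. The identity $\mathcal{F}(\star_{\mathcal{F}}) = \mathcal{F}$ reduces to the equivalence $1 \in I^{\star_{\mathcal{F}}} \Leftrightarrow I \in \mathcal{F}$: the forward direction supplies some $J \in \mathcal{F}$ with $J \subseteq I$, so $I \in \mathcal{F}$ by the upward-closure axiom, and the converse is immediate by taking $J := I$. The identity $\star_{\mathcal{F}(\star)} = \star$ is the delicate step: comparing the definition of $\stu$ from the paragraph preceding Proposition \ref{prop:retraction} with the formula above for $\star_{\mathcal{F}(\star)}$ shows at once that $\star_{\mathcal{F}(\star)} = \stu$, and since $\star$ is stable the characterization of stable operations recalled there gives $\stu = \star$.

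For continuity, the same equivalence $1 \in I^{\star_{\mathcal{F}}} \Leftrightarrow I \in \mathcal{F}$ yields
\begin{equation*}
\boldsymbol{\lambda}^{-1}(\overline{\texttt{U}}_I) = \texttt{W}_I \quad \text{and} \quad \boldsymbol{\mu}^{-1}(\texttt{W}_I) = \overline{\texttt{U}}_I,
\end{equation*}
for every ideal $I$ of $D$, so both maps are continuous at the level of the chosen subbases and hence $\boldsymbol{\lambda}$ is a homeomorphism. For the finite-type version $\boldsymbol{\lambda}_f$, the preservation of finite type observed above gives the bijection $\locsistf(D) \to \SStarstabft(D)$, and the homeomorphism follows by restriction, with the subbasic sets $\widetilde{\texttt{U}}_J$ (for $J \in \f(D)$, $J \subseteq D$) pulling back to $\texttt{W}_J \cap \locsistf(D)$. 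Spectrality of $\locsistf(D)$ is then immediate from Theorem \ref{stabili}, transported across $\boldsymbol{\lambda}_f$.

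The main obstacle is the identity $\star_{\mathcal{F}(\star)} = \star$: this is the only step in which stability is genuinely used, and it rests on the structural characterization of $\stu$ already recorded in the excerpt. All the remaining checks are axiomatic bookkeeping, and once the inverse bijection is in hand the topological statement is essentially formal.
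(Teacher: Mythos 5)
Your proposal is correct and follows essentially the same route as the source the survey cites for this theorem (\cite{FiFoSp-4}, built on the Fontana--Huckaba correspondence of \cite{fohu}): the mutually inverse assignments $\mathcal{F}\mapsto\star_{\mathcal{F}}$ and $\star\mapsto\{I\mid 1\in I^\star\}$, the identification $\star_{\mathcal{F}(\star)}=\stu=\star$ for stable $\star$, preservation of finite type in both directions, and continuity both ways via $\boldsymbol{\lambda}^{-1}(\overline{\texttt{U}}_I)=\texttt{W}_I$. The only point to make explicit is the standing convention (implicit in the survey's definition and needed even for the statement to hold) that localizing systems are nonempty and do not contain the zero ideal, which your restriction to nonzero ideals in $\mathcal{F}(\star)$ already presupposes.
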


\section{The space of inverse-closed subsets of a spectral space}

Let $D$ be an integral domain. By the results in the previous sections, the spaces $\overr(D)$, $\SStarstabft(D)$ and $\SStarf(D)$ are spectral spaces. Since $\spec(D)$ can be embedded in each of these spaces, 
they can be seen as peculiar ``spectral extensions'' of $\spec(D)$. 

 In particular, in this section we focus on  the canonical embedding $  \spec(D)\hookrightarrow \SStarstabft(D)$, in order to generalize this spectral extension to arbitrary rings or to arbitrary spectral spaces. For this purpose, we need some preliminaries, including the notions and properties of (1.3).

\medskip

 We start by observing that 
 the   natural injection  $s:\spec(D)\rightarrow\SStarstabft(D)$, defined by $s( P):= s_{\{P\}} =  \wedge_{\{D_P\}}$, 
  is a topological embedding of topological (spectral) spaces (both endowed with the Zariski topology). Indeed, if 
  $J$ is a finitely gene\-ra\-ted ideal of $D$ and   $ \widetilde{\texttt{U}}_J := {\texttt{V}}_J \cap \SStarstabft(D) = \{\star\in \SStar(D)\mid 1\in J^\star\} \cap \SStarstabft(D)$ is a generic   subbasic open set of $\SStarstabft(D)$, then 
  $$
  s^{-1}(\widetilde{\texttt{U}}_J) = \{P\in \spec(D)  \mid 1\in JD_P \} = \texttt{D}(J)\,.
  $$

  \begin{oss}      The map $s:\spec(D)\rightarrow\SStarstabft(D)$ is the composition of the homeomorphism $\ell: \spec(D) \rightarrow \Loc(D)$, defined by $\ell({P}) := D_P$, for each $P \in \spec(D)$ and the topological embedding $\iota_{\mbox{\it\tiny\texttt{Loc}}}:  \Loc(D) \hookrightarrow \SStarstabft(D)$ (defined in Remark \ref{sp-st}(b)).  Note also that the homeomorphism $\ell$ induces an isomorphism of partially ordered sets (with the ordering induced by the topologies), however the ordering in $ \Loc(D) $, induced by the Zariski topology, is the opposite order of the set-theoretic inclusion.
    \end{oss}
   
     \smallskip

  Given a spectral space $X$, let $\boldsymbol{\mathcal{X}}(X) := \{ Y \subseteq X \mid   Y \neq \emptyset,\,  Y =\Cl^{\texttt{inv}}(Y)\}$. 
   If $X=\spec(R)$ for some ring $R$, we write for short $\xcal(R)$ instead of $\xcal(\spec(R))$.

 We define a \textit{Zariski topology on} $\boldsymbol{\mathcal{X}}(X)$ by taking, as subbasis of open sets, the sets of the form 
$$
\boldsymbol{\mathcal{U}}(\Omega):=\{Y\in \boldsymbol{\mathcal{X}}(X) \mid Y\subseteq \Omega \},$$
where $\Omega$ varies among the quasi-compact open subspaces of $X$. 
Note that the previous subbasis is in fact a basis, since $\boldsymbol{\mathcal{U}}(\Omega)\cap \boldsymbol{\mathcal{U}}(\Omega')=\boldsymbol{\mathcal{U}}(\Omega\cap \Omega')$ and $\Omega\cap \Omega'$ is a quasi-compact open subspace of $X$, for any pair $\Omega,\Omega'$ of quasi-compact open subspaces  of $X$.
Moreover, $\Omega \in \boldsymbol{\mathcal{U}}(\Omega)$, since a quasi-compact open subset $\Omega$ of $X$ is a closed   set  in the inverse topology of $X$. Note also that,  when $X=\spec( R)$, for some ring $R$, a generic basic open set of the Zariski topology on $\boldsymbol{\mathcal{X}}(R)$ is of the form
$$
\boldsymbol{\mathcal{U}}(\texttt{D}(J))=\{Y\in \boldsymbol{\mathcal{X}}(R) \mid Y\subseteq \texttt{D}(J) \}
$$
where $J$ is any finitely generated ideal of $R$.

\bigskip

The main result in this setting is the following, which provides a description of the space $ \boldsymbol{\mathcal{X}}(X)$ (see \cite{FiFoSp-1}).

\begin{teor} \label{embedding} 
Let $X$  be a spectral space.
\begin{itemize}
\item[\rm (1)] The space
 $ \boldsymbol{\mathcal{X}}(X)$, 
endowed with the Zariski topology, is a spectral space.
 
  \item[\rm (2)] Let $Y_1, \ Y_2 \in \boldsymbol{\mathcal{X}}(X)$. Then, 
$ Y_1 \subseteq Y_2$   if and only if   $ Y_1\leq _{\boldsymbol{\mathcal{X}}(X)} Y_2$.
 \item[\rm (3)] The canonical map $\varphi:X\rightarrow \boldsymbol{\mathcal{X}}(X)$, defined by
$
\varphi(x):=\{x\}^{\mbox{\rm\tiny\texttt{gen}}}
$, for each $x\in X$,   is  a spectral embedding  (which is also an order-preserving embedding  between ordered sets, with the ordering induced by the Zariski topologies). 
\item[\rm (4)]   $\boldsymbol{\mathcal{X}}(X)$ has a unique 
maximal point (i.e., $X$). 

 \item[\rm (5)] 
 Let $Z$  be another spectral space and  let $\varphi: X\rightarrow \boldsymbol{\mathcal{X}}(X)$
  be the spectral embedding  defined in (3).
Consider a spectral map  $\lambda: X \rightarrow Z$ satisfying the following condition:

\begin{itemize}
\item[ ] \emph{(\texttt{sup-completion})}  
{\sl 
For each nonempty quasi-compact subspace $Y$ of $X$, 
 there exists $z_Y:=\sup \{\lambda(y) \mid y \in Y \}$ (where {\rm sup} is taken with respect to the ordering induced by the topology of $Z$)  and if  \ $Y'$ is another nonempty quasi-compact subspace   of $X$, with 
 $ \mbox{\rm\texttt{Cl}}_X^{\mbox{\rm\tiny\texttt{inv}}}{(Y')} \neq  \mbox{\rm\texttt{Cl}}_X^{\mbox{\rm\tiny\texttt{inv}}}{(Y)}$, then  $z_{Y'} \neq z_Y$. 
 Moreover, if  $\mathcal W$  denotes the set of all nonempty quasi-compact open subspaces $\Omega$ of $X$, then
  $\mathscr{B} :=\{\{z_\Omega\}^{\mbox{\rm\tiny\texttt{gen}}} \mid \Omega \in \mathcal W \}$ is a subbasis for the open sets of $Z$.
  }
 \end{itemize}
Then, the following properties hold.
\begin{itemize}
\item[\rm (5.a)]
There exists a spectral embedding  $\boldsymbol{\lambda^{\sharp}}: \boldsymbol{\mathcal{X}}(X)\rightarrow Z$ such that $\boldsymbol{\lambda^{\sharp}} \circ \varphi = \lambda$.

\item[\rm (5.b)] If, furthermore, 
$z =  \sup_Z \{\lambda(x) \mid x \in \lambda^{-1}(\{z\}^{\mbox{\rm\tiny\texttt{gen}}})\}$ for each $z \in Z$,  then $\boldsymbol{\lambda^{\sharp}}: \boldsymbol{\mathcal{X}}(X)\rightarrow Z$ is the unique spectral embedding   (in fact, homeomorphism) such that $\boldsymbol{\lambda^{\sharp}} \circ \varphi = \lambda$.
\end{itemize} 
\end{itemize}
\end{teor}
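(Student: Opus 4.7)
The plan is to prove (1)--(4) by applying the ultrafilter criterion (Theorem~\ref{spec-charact}) together with the properties of the inverse topology recalled in (1.3), and then to construct the universal map in (5) by leveraging the sup-completion hypothesis.

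For (1), I would work with the subbasis $\mathscr{S}:=\{\boldsymbol{\mathcal U}(\Omega)\mid\Omega\text{ quasi-compact open in }X\}$ of $\boldsymbol{\mathcal X}(X)$. The T$_0$ axiom is immediate: if $Y_1\neq Y_2$ and $x\in Y_1\setminus Y_2$, then the inverse-closedness of $Y_2$ yields a quasi-compact open $\Omega\supseteq Y_2$ with $x\notin\Omega$, so $Y_2\in\boldsymbol{\mathcal U}(\Omega)\not\ni Y_1$. For the ultrafilter part, given an ultrafilter $\ms U$ on $\boldsymbol{\mathcal X}(X)$, I set
\begin{equation*}
\mathcal{W}(\ms U):=\{\Omega\text{ quasi-compact open in }X\mid\boldsymbol{\mathcal U}(\Omega)\in\ms U\},\qquad Y^*:=\bigcap_{\Omega\in\mathcal{W}(\ms U)}\Omega,
\end{equation*}
and claim $Y^*$ lies in the $\mathscr{S}$-ultrafilter limit set. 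Since $\boldsymbol{\mathcal U}(\Omega_1)\cap\boldsymbol{\mathcal U}(\Omega_2)=\boldsymbol{\mathcal U}(\Omega_1\cap\Omega_2)$ and members of an ultrafilter are nonempty, $\mathcal{W}(\ms U)$ is closed under finite intersections with nonempty members; these are closed in $X^{\mbox{\tiny\texttt{inv}}}$, so the quasi-compactness of $X^{\mbox{\tiny\texttt{inv}}}$ (spectral, by Hochster) forces $Y^*\neq\emptyset$, and hence $Y^*\in\boldsymbol{\mathcal X}(X)$. The implication $\boldsymbol{\mathcal U}(\Omega)\in\ms U\Rightarrow Y^*\subseteq\Omega$ is immediate. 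The converse, which I expect to be the main technical point, relies on Hochster's identification of the quasi-compact opens of $X^{\mbox{\tiny\texttt{inv}}}$ with the complements of quasi-compact opens of $X$: if $Y^*\subseteq\Omega$, then $\{X\setminus\Omega'\}_{\Omega'\in\mathcal{W}(\ms U)}$ is an open cover of the quasi-compact (in $X^{\mbox{\tiny\texttt{inv}}}$) set $X\setminus\Omega$, so a finite subcover produces $\Omega'_1\cap\cdots\cap\Omega'_n\subseteq\Omega$, whence $\boldsymbol{\mathcal U}(\Omega)\supseteq\boldsymbol{\mathcal U}(\Omega'_1)\cap\cdots\cap\boldsymbol{\mathcal U}(\Omega'_n)\in\ms U$.

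Parts (2)--(4) then follow formally. For (2), in the spectral space $\boldsymbol{\mathcal X}(X)$ the relation $Y_1\leq Y_2$ is equivalent to the condition that every subbasic open containing $Y_2$ contains $Y_1$, which collapses to $Y_1\subseteq Y_2$ because $Y_2=\bigcap\{\Omega\text{ q.c.~open}:\Omega\supseteq Y_2\}$ by inverse-closedness. Part (3) rests on the computation $\varphi^{-1}(\boldsymbol{\mathcal U}(\Omega))=\Omega$ (valid because $\Omega$ is inverse-closed and hence contains the generization-closure $\{x\}^{\mbox{\tiny\texttt{gen}}}$ of each of its points); this gives continuity and the spectral-map property, and, combined with injectivity (from T$_0$ of $X$) and (2), shows that $\varphi$ is an order-preserving spectral embedding. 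Part (4) is clear from (2), since $X\in\boldsymbol{\mathcal X}(X)$ contains every other element.

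For (5.a), I would define $\boldsymbol{\lambda^{\sharp}}(Y)$ for $Y\in\boldsymbol{\mathcal X}(X)$ to be the unique point $z\in Z$ characterized by $z\in\{z_\Omega\}^{\mbox{\tiny\texttt{gen}}}\Leftrightarrow Y\subseteq\Omega$ for every $\Omega\in\mathcal W$; equivalently, the maximum of the inverse-closed set $\bigcap\{\{z_\Omega\}^{\mbox{\tiny\texttt{gen}}}:\Omega\in\mathcal W,\,\Omega\supseteq Y\}$ in $Z$. The three clauses of (sup-completion) are tailored to make this well-defined: the $z_\Omega$ exist, the uniqueness clause ensures that inverse-closures of quasi-compact subspaces of $X$ are faithfully reflected in points of $Z$, and the subbasis clause guarantees that this intersection is pinned down by a single point. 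By design, $(\boldsymbol{\lambda^{\sharp}})^{-1}(\{z_\Omega\}^{\mbox{\tiny\texttt{gen}}})=\boldsymbol{\mathcal U}(\Omega)$, giving continuity and the spectral-map property, while the equation $\boldsymbol{\lambda^{\sharp}}\circ\varphi=\lambda$ reduces to $z_{\{x\}}=\lambda(x)$, which is trivial. For (5.b), the strengthened hypothesis that every $z\in Z$ equals $\sup\{\lambda(x):x\in\lambda^{-1}(\{z\}^{\mbox{\tiny\texttt{gen}}})\}$ lets one recover $Y$ from $\boldsymbol{\lambda^{\sharp}}(Y)$ via the preimage construction, yielding surjectivity and hence the homeomorphism. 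The main obstacle I foresee lies precisely in (5): one must bridge the gap between $z_{(-)}$ defined on quasi-compact subspaces of $X$ and arbitrary inverse-closed $Y\in\boldsymbol{\mathcal X}(X)$ (which need not be quasi-compact in $X$), and justify that the resulting directed intersection of $\{z_\Omega\}^{\mbox{\tiny\texttt{gen}}}$'s in $Z$ does pick out a single point---which is precisely what the sup-completion axiom is engineered to provide.
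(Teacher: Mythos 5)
Your treatment of (1)--(4) is essentially correct: the family $\mathcal{W}(\ms U)$ consists of quasi-compact open (hence inverse-closed) sets with the finite intersection property, quasi-compactness of $X^{\mathrm{inv}}$ gives $Y^*\neq\emptyset$, and the finite-subcover argument, using Hochster's identification of the quasi-compact opens of $X^{\mathrm{inv}}$ with the complements of quasi-compact opens of $X$, correctly yields the nontrivial implication $Y^*\subseteq\Omega\Rightarrow\ucal(\Omega)\in\ms U$; parts (2)--(4) then follow as you indicate. (The survey itself defers the proof to the paper in preparation, so I judge your argument on its own merits.)

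The genuine gap is in (5), and you flag it without closing it. First, your stated obstacle rests on a misconception: every $Y\in\xcal(X)$ \emph{is} quasi-compact in $X$, since an inverse-closed set is an intersection of quasi-compact open sets, hence closed in the constructible topology, hence quasi-compact in the given topology. Thus the sup-completion hypothesis applies directly to every $Y\in\xcal(X)$ and the natural definition is simply $\boldsymbol{\lambda^{\sharp}}(Y):=z_Y=\sup\{\lambda(y)\mid y\in Y\}$. Second, and more seriously, the well-definedness of your $\boldsymbol{\lambda^{\sharp}}$ --- the existence of a point $z$ with $z\in\{z_\Omega\}^{\mathrm{gen}}\Leftrightarrow Y\subseteq\Omega$ for all $\Omega\in\mathcal W$ --- is exactly the identity $(\boldsymbol{\lambda^{\sharp}})^{-1}(\{z_\Omega\}^{\mathrm{gen}})=\ucal(\Omega)$ that you afterwards invoke ``by design''; nothing in your sketch proves it. The direction $Y\subseteq\Omega\Rightarrow z_Y\leq z_\Omega$ is clear, but the converse $z_Y\leq z_\Omega\Rightarrow Y\subseteq\Omega$ does not follow from monotonicity alone; one needs an argument such as: $Y\cup\Omega$ is nonempty, quasi-compact and inverse-closed, $z_{Y\cup\Omega}=\sup\{z_Y,z_\Omega\}=z_\Omega$, so the injectivity clause of (\texttt{sup-completion}) forces $\Cl^{\mathrm{inv}}(Y\cup\Omega)=\Cl^{\mathrm{inv}}(\Omega)$, i.e.\ $Y\subseteq\Omega$. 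Only with both directions do you obtain injectivity of $\boldsymbol{\lambda^{\sharp}}$, the embedding property, and (together with the fact that $\ucal(\Omega)\cong\xcal(\Omega)$ is quasi-compact, by (1) applied to the spectral space $\Omega$) spectrality of $\boldsymbol{\lambda^{\sharp}}$. Finally, in (5.b) you address only surjectivity: uniqueness still requires an argument, e.g.\ for any spectral embedding $\boldsymbol{\Lambda}$ with $\boldsymbol{\Lambda}\circ\varphi=\lambda$ one has $z_Y\leq\boldsymbol{\Lambda}(Y)$ by monotonicity, while $\lambda^{-1}(\{\boldsymbol{\Lambda}(Y)\}^{\mathrm{gen}})\subseteq Y$ because embeddings reflect specializations, so the extra hypothesis of (5.b) gives $\boldsymbol{\Lambda}(Y)\leq z_Y$; note that Example \ref{ex-ext} shows the embedding (not merely spectral map) hypothesis must really be used. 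As written, part (5) is asserted rather than proved.
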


Let $X$ be a spectral space and let $\boldsymbol{\hat{\mathcal{X}}}(X):=  \{ Y \subseteq X \mid  Y = {\Cl^{\mbox{\rm\tiny\texttt{inv}}}}(Y)\} =\boldsymbol{\ \mathcal{X}}(X) \cup \{\emptyset\}$. The techniques used for proving Theorem \ref{embedding}(1) allow also to show that $\boldsymbol{\hat{\mathcal{X}}}(X)$ (endowed with an obvious extension of the topology of 
$\boldsymbol{ \mathcal{X}}(X)$) is a spectral space.
Moreover, since $\ucal(\emptyset)=\{\emptyset\}$ is open in $\boldsymbol{\hat{\mathcal{X}}}(X)$, then we deduce that 
$\boldsymbol{\mathcal{X}}(X)$ is a closed (spectral) subspace of  $\boldsymbol{\hat{\mathcal{X}}}(X)$.

\medskip
 
As a consequence of the previous theorem, it is possible to compare the dimensions of $X$ and $\xcal(X)$ with the cardinality $|X|$ of the spectral space $X$ (see \cite{FiFoSp-1}). 

\begin{prop}\label{homeo}  
Let $X$ be a spectral space and let   $\varphi:X \rightarrow \boldsymbol{\mathcal{X}}(X)$   be  the topological embedding defined in Theorem \ref{embedding}(2). Then,  
\begin{itemize}
\item[\rm (1)] $\varphi(X)=   \boldsymbol{\mathcal{X}}(X)$ if and only if $(X,\leq)$ is linearly ordered.
\item[\rm (2)] $\dim(\xcal(X))=|X|-1\geq\dim(X)$. Moreover, in the finite dimensional case, $\dim(\xcal(X))$ $=\dim(X)$ if and only if $X$ is linearly ordered. 
\end{itemize}
\end{prop}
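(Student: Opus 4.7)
\textit{Part (1).} For ($\Leftarrow$), given any $Y\in\xcal(X)$ I would show that $Y$ has a maximum element $x$, whence $Y=\{x\}^{\mbox{\rm\tiny\texttt{gen}}}=\varphi(x)$. Since $Y$ is inverse-closed in the spectral space $X$, it is closed under generizations and quasi-compact in $X$. Linear orderedness gives $\mbox{\rm\texttt{Cl}}_X(\{y\})=\{z\in X:y\leq z\}$ for every $y$, so the family $\{Y\cap\mbox{\rm\texttt{Cl}}_X(\{y\}):y\in Y\}$ of relatively closed subsets of $Y$ has the finite intersection property: the maximum of any finite subcollection of indices lies in $Y$ (being one of them) and belongs to every corresponding member. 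Quasi-compactness of $Y$ then yields a common point $x\in Y$ with $x\geq y$ for all $y\in Y$, and downward-closedness of $Y$ gives $Y=\{x\}^{\mbox{\rm\tiny\texttt{gen}}}$. For ($\Rightarrow$) I would argue contrapositively: given incomparable $x_1,x_2\in X$, the set $Y:=\{x_1\}^{\mbox{\rm\tiny\texttt{gen}}}\cup\{x_2\}^{\mbox{\rm\tiny\texttt{gen}}}$ is a finite union of inverse-closed sets, hence lies in $\xcal(X)$; but if $Y=\{x\}^{\mbox{\rm\tiny\texttt{gen}}}$, then $x\in Y$ forces (say) $x\leq x_1$, while $x_2\in Y\subseteq\{x\}^{\mbox{\rm\tiny\texttt{gen}}}$ forces $x_2\leq x\leq x_1$, contradicting the incomparability.

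\textit{Part (2).} The upper bound $\dim(\xcal(X))\leq|X|-1$ is a counting argument: a strict chain $Y_0\subsetneq\cdots\subsetneq Y_d$ in $\xcal(X)$ forces $|Y_d|\geq|Y_0|+d\geq d+1$, so $d\leq|X|-1$. The reverse bound comes from fixing a linear extension of $(X,\leq)$: in the finite case $|X|=n$, the initial segments $Y_i:=\{x_1,\ldots,x_i\}$ (with $x_1,\ldots,x_n$ listed in the extension) are automatically downward-closed in $(X,\leq)$ and finite, hence quasi-compact and therefore inverse-closed, producing a chain of length $n-1$ in $\xcal(X)$. The inequality $\dim(X)\leq\dim(\xcal(X))$ follows from Theorem \ref{embedding}(3), since the order-preserving embedding $\varphi$ carries chains in $X$ to chains in $\xcal(X)$ of the same length. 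For the moreover clause, if $\dim(\xcal(X))=\dim(X)$ is finite, then $|X|-1=\dim(X)<\infty$, so $X$ is finite; a finite poset of cardinality $n$ with Krull dimension $n-1$ must contain a totally ordered subset of size $n$, hence coincides with that chain and is linearly ordered. The converse is immediate.

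\textit{Main obstacle.} The decisive step is the ($\Leftarrow$) direction of Part (1), where one must convert linear orderedness of $X$ into the existence of a maximum in $Y$; this requires exploiting the fact—recorded in the excerpt—that inverse-closed subsets of a spectral space are quasi-compact in the original topology, so that the finite-intersection-property argument applies. Everything else reduces to combinatorics on the posets $X$ and $\xcal(X)$. The only minor additional point, if one wishes to handle the infinite-cardinality case of Part (2) in full, is to verify that a transfinite extension of the step-by-step construction still yields inverse-closed (and not merely downward-closed) initial segments; otherwise the finite-dimensional case, which is the one needed for the moreover clause, is complete.
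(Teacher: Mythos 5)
Your argument is correct, and since the paper itself gives no proof of this proposition (it only refers to the paper [FiFoSp-1], in preparation), there is nothing to compare it against line by line; judged on its own, everything essential is in place. In (1)($\Leftarrow$) you correctly exploit that an inverse-closed set is an intersection of quasi-compact open sets, hence patch-closed and therefore quasi-compact in the original topology and closed under generizations, so your finite-intersection-property argument (using that $\Cl(\{y\})=\{z\mid y\le z\}$ and that a finite subset of a chain has a maximum) legitimately produces a maximum point $x$ of $Y$ with $Y=\{x\}^{\mathrm{gen}}=\varphi(x)$; the contrapositive direction and the whole finite-cardinality analysis in (2), including the ``moreover'' clause, are also fine, using Theorem \ref{embedding}(2)--(3) to identify the order on $\xcal(X)$ with inclusion.

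The one point to adjust is your closing remark about the infinite case. A transfinite extension of the initial-segment construction is not the right repair: an infinite downward-closed initial segment of a linear extension need not be inverse-closed (for instance, in the spectrum of a valuation domain whose primes form a chain of order type $\omega+1$, the set of non-maximal primes is closed under generizations but its inverse closure is the whole spectrum). But no transfinite chains are needed, because Krull dimension is the supremum of the lengths of \emph{finite} chains: given any $n$ distinct points $x_1,\dots,x_n$ listed compatibly with a linear extension of $\le$, the sets $Y_i:=\{x_1,\dots,x_i\}^{\mathrm{gen}}=\bigcup_{j\le i}\Cl^{\mathrm{inv}}(\{x_j\})$ are inverse-closed (finite unions of inverse-closed sets) and strictly increasing, since $x_{i+1}\not\le x_j$ for $j\le i$. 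This gives chains of every finite length when $X$ is infinite, which is all that the equality $\dim(\xcal(X))=|X|-1$ can mean in that case, and it subsumes your finite construction as well.
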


While the inequality $|X|-1\geq\dim(X)$ is sharp,  
the more non-comparable elements the set $X$ contains, the smaller $\dim(X)$ is  with respect to $|X|$. For example, if $X$ is homeomorphic to the prime spectrum of the direct product of $n+1$ fields, $n\geq 1$, then $\dim(X)=0$ while $|X|-1=n$.  

Furthermore, if $\dim(X)$ is not finite, then clearly $\dim(\xcal(X))=\dim(X)$, but we can easily choose $X$  to be not totally ordered. 

We also note that, if $\phi:X\longrightarrow Y$ is a spectral map of spectral space, the map $\xcal(\phi):\xcal(X)\longrightarrow\xcal(Y)$ defined by $\xcal(\phi)(C):=\phi(C)^{\mathrm{gen}}$ for every inverse-closed subset $C$ of $X$ is again a spectral map. It follows that the assignment $X\mapsto\xcal(X)$, $\phi\mapsto\xcal(\phi)$ is a (covariant) functor from the category of spectral spaces into itself (see
\cite{FiFoSp-1} 
  for details).

\medskip
We show next that  the map $\boldsymbol{\lambda^{\sharp}}: \boldsymbol{\mathcal{X}}(X)\rightarrow Z$ (Theorem \ref{embedding}(5.a)) is not unique. 
The following example shows in fact that it is possible that there exist two different spectral maps (with at most one non-injective) $\boldsymbol{\Lambda_1}, \boldsymbol{\Lambda_2}: \boldsymbol{\mathcal{X}}(X)\rightarrow Z$, $\boldsymbol{\Lambda_1} \neq \boldsymbol{\Lambda_2}$, such that $\boldsymbol{\Lambda_1}\circ \varphi = \lambda =\boldsymbol{\Lambda_2}\circ \varphi$.

\begin{ex}\label{ex-ext}
Consider the spectral space $X:=\{0,a,b,c\}$, with $0<a,b,c$ and $a,b,c$ not comparable. Let 
$\boldsymbol{\Lambda}:\boldsymbol{\mathcal{X}}(X)\rightarrow \boldsymbol{\mathcal{X}}(X)$ be the function defined by
$$
\boldsymbol{\Lambda}(C):=\begin{cases}
C & \mbox{ if } C\neq \{a,b\}^{\mbox{\rm\tiny\texttt{gen}}},\\
X & \mbox{ if } C=\{a,b\}^{\mbox{\rm\tiny\texttt{gen}}}.
\end{cases}
$$

The unique basic open set of $\boldsymbol{\mathcal{X}}(X)$ containing $\{a,b \}^{\mbox{\rm\tiny\texttt{gen}}}$ is \ 
 $\boldsymbol{\mathcal{U}}(\{a,b \}^{\mbox{\rm\tiny\texttt{gen}}})$, 
and clearly we have $\boldsymbol{\Lambda}^{-1}(\boldsymbol{\mathcal{U}}(\{a,b\}^{\mbox{\rm\tiny\texttt{gen}}}))=\boldsymbol{\mathcal{U}}(\{a\}^{\mbox{\rm\tiny\texttt{gen}}})\cup \boldsymbol{\mathcal{U}}(\{b\}^{\mbox{\rm\tiny\texttt{gen}}})$. 
For any other basic open set $\boldsymbol{\mathcal{U}} $ of $\boldsymbol{\mathcal{X}}(X)$, we have 
$\boldsymbol{\Lambda}^{-1}(\boldsymbol{\mathcal{U}})=\boldsymbol{\mathcal{U}} $.
 This shows that $\boldsymbol{\Lambda}$ is a nontrivial spectral map, $\boldsymbol{\Lambda} \neq {\texttt{id}}_{\boldsymbol{\mathcal{X}}(X)}$, such that $\boldsymbol{\Lambda}(\{x\}^{\mbox{\rm\tiny\texttt{gen}}})=\{x\}^{\mbox{\rm\tiny\texttt{gen}}}$, for each $x\in X$. 
\end{ex}

The following statement provides an explicit characterization of the space $\boldsymbol{\mathcal{X}}(X)$ and follows immediately from Theorem \ref{embedding}(5).

\begin{cor}
Let $\lambda:X\rightarrow Z$ be a spectral embedding of spectral spaces. Then, the following conditions are equivalent. 
\begin{enumerate}[\,\,\,\rm (i)]
\item $Z$ is a partially ordered set (under the ordering induced by the topology), for each $z \in Z$, 
$z =  \sup_Z \{\lambda(x) \mid x \in \lambda^{-1}(\{z\}^{\mbox{\rm\tiny\texttt{gen}}})\}$, and $\lambda$ satisfies  the condition \emph{(\texttt{sup-completion})}.
\item $Z$ is homeomorphic to $\boldsymbol{\mathcal{X}}(X)$, via a unique homeomorphism $\boldsymbol{\Lambda}:\boldsymbol{\mathcal{X}}(X)\rightarrow Z$ such that $\boldsymbol{\Lambda} \circ \varphi = \lambda$. 
\end{enumerate}
\end{cor}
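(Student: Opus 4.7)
The plan is to reduce both directions to Theorem \ref{embedding}(5), which has essentially prepackaged the work. First I would observe that, since $Z$ is spectral it is automatically $T_0$, so the ``$Z$ is partially ordered'' clause in (i) is free and can be ignored in both directions. Via the homeomorphism appearing in (ii), it will also be enough to verify the conditions of (i) for the canonical embedding $\varphi:X\rightarrow \boldsymbol{\mathcal{X}}(X)$ alone, and then transport the statement.

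For (i) $\Rightarrow$ (ii) there is nothing new to do: the hypothesis \texttt{(sup-completion)} on $\lambda$ yields, via Theorem \ref{embedding}(5.a), a spectral embedding $\boldsymbol{\lambda^{\sharp}}:\boldsymbol{\mathcal{X}}(X)\rightarrow Z$ such that $\boldsymbol{\lambda^{\sharp}}\circ\varphi=\lambda$; the extra condition that $z=\sup_Z\{\lambda(x)\mid x\in\lambda^{-1}(\{z\}^{\mbox{\rm\tiny\texttt{gen}}})\}$ for every $z\in Z$ is exactly the additional assumption of (5.b), which upgrades $\boldsymbol{\lambda^{\sharp}}$ to the unique homeomorphism $\boldsymbol{\Lambda}$ with $\boldsymbol{\Lambda}\circ\varphi=\lambda$.

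For (ii) $\Rightarrow$ (i) I would work with $\varphi$ in place of $\lambda$ (replacing $Z$ by $\boldsymbol{\mathcal{X}}(X)$ via $\boldsymbol{\Lambda}$). By Theorem \ref{embedding}(2) the order on $\boldsymbol{\mathcal{X}}(X)$ induced by its Zariski topology is set-theoretic inclusion. Now fix $Y\in\boldsymbol{\mathcal{X}}(X)$; then $\{Y\}^{\mbox{\rm\tiny\texttt{gen}}}$ consists precisely of the inverse-closed subsets of $Y$. Since $Y$ is itself inverse-closed it is closed under generizations (cf.\ the discussion following Theorem \ref{ultracons}), so $\varphi^{-1}(\{Y\}^{\mbox{\rm\tiny\texttt{gen}}})=\{x\in X\mid\{x\}^{\mbox{\rm\tiny\texttt{gen}}}\subseteq Y\}=Y$, and the supremum in $\boldsymbol{\mathcal{X}}(X)$ of $\{\varphi(x)\mid x\in Y\}=\{\{x\}^{\mbox{\rm\tiny\texttt{gen}}}\mid x\in Y\}$ is the smallest inverse-closed set containing $Y$, namely $Y$ itself. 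This gives the supremum clause of (i).

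To close, I would verify \texttt{(sup-completion)} for $\varphi$: for an arbitrary nonempty quasi-compact $Y\subseteq X$, the same reasoning identifies $z_Y=\sup\{\{y\}^{\mbox{\rm\tiny\texttt{gen}}}\mid y\in Y\}=\mathrm{Cl}^{\mbox{\rm\tiny\texttt{inv}}}(Y)$ in $\boldsymbol{\mathcal{X}}(X)$, so two such $Y,Y'$ with distinct inverse closures give $z_Y\neq z_{Y'}$. Moreover, for $\Omega$ quasi-compact and open in $X$ we have $\mathrm{Cl}^{\mbox{\rm\tiny\texttt{inv}}}(\Omega)=\Omega$, whence $\{z_\Omega\}^{\mbox{\rm\tiny\texttt{gen}}}=\{W\in\boldsymbol{\mathcal{X}}(X)\mid W\subseteq\Omega\}=\boldsymbol{\mathcal{U}}(\Omega)$, which is exactly the defining subbasis of $\boldsymbol{\mathcal{X}}(X)$. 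The only mildly delicate point is the identification $\sup\{\{y\}^{\mbox{\rm\tiny\texttt{gen}}}\mid y\in Y\}=\mathrm{Cl}^{\mbox{\rm\tiny\texttt{inv}}}(Y)$, but this is immediate from the fact that inverse-closed sets are intersections of quasi-compact open subspaces and hence form a complete lattice under inclusion with sups given by inverse closures of unions.
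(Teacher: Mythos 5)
Your argument is correct and follows the same route as the paper, which simply notes that the corollary ``follows immediately from Theorem \ref{embedding}(5)'': (i) $\Rightarrow$ (ii) is exactly an application of (5.a) and (5.b), and (ii) $\Rightarrow$ (i) amounts to checking the conditions for the canonical embedding $\varphi$ (where the induced order on $\boldsymbol{\mathcal{X}}(X)$ is inclusion, $\varphi^{-1}(\{Y\}^{\mbox{\rm\tiny\texttt{gen}}})=Y$, sups are inverse closures, and $\{z_\Omega\}^{\mbox{\rm\tiny\texttt{gen}}}=\boldsymbol{\mathcal{U}}(\Omega)$) and transporting them through the homeomorphism $\boldsymbol{\Lambda}$. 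Your explicit verification of these facts is precisely the content the paper leaves implicit, so no changes are needed.
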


\medskip
In the special case  where $X = \spec(D)$ for some  integral domain $D$, the spectral space $\boldsymbol{\mathcal{X}}(D) := \{ Y \subseteq \spec(D) \mid \emptyset \neq Y =\Cl^{\mbox{\rm\tiny\texttt{inv}}}(Y)\}$ can be interpretated in terms of  stable semistar operations of finite type (see \cite{FiFoSp-1}).

\begin{prop} \label{closed-inverse} 
Let $D$ be an integral domain and let $\boldsymbol{\mathcal{X}}(D) := \{ Y \subseteq \spec(D) \mid \emptyset \neq Y ={\mbox{\rm$\Cl$}}^{\mbox{\rm\tiny\texttt{inv}}}(Y)\}$. The map $\boldsymbol{s^\sharp}: \boldsymbol{\mathcal{X}}(D)\rightarrow \SStarstabft(D)$, defined by $\boldsymbol{s^\sharp}(Y) := s_Y$ for each $Y \in \boldsymbol{\mathcal{X}}(D)$, is a homeomorphism with inverse map $\Delta: \SStarstabft(D)\rightarrow  \boldsymbol{\mathcal{X}}(D)$,  defined by $\Delta(\star) := \mbox{\rm\qspec}^\star(D)$ for each $\star$ stable semistar operation of finite type on $D$.
Moreover,   if $\varphi:   \spec(D) \rightarrow \xcal(D)$ is the topological embedding defined in Theorem \ref{embedding}(3) and $s: \spec(D)\rightarrow \SStarstabft(D)$  is the topological embedding defined by $P \mapsto s_{\{P\}}$, for each prime ideal $P$ of $D$, then
$\boldsymbol{s^\sharp} \circ \varphi = s$.
 \end{prop}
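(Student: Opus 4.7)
The plan is to verify, in order, that (i) $\boldsymbol{s^\sharp}$ and $\Delta$ are well-defined, (ii) they are mutually inverse, (iii) $\boldsymbol{s^\sharp}$ is a homeomorphism, and (iv) $\boldsymbol{s^\sharp}\circ\varphi=s$.

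For well-definedness of $\boldsymbol{s^\sharp}$: if $Y\in\xcal(D)$, then $Y$ is inverse-closed in the spectral space $\spec(D)$, hence quasi-compact in the Zariski topology (closed sets of the constructible topology are compact, and the Zariski topology is coarser). Proposition~\ref{spettrali}(2) then gives that $s_Y$ is of finite type, and Remark~\ref{spectral} gives stability, so $s_Y\in\SStarstabft(D)$. For well-definedness of $\Delta$: any $\star\in\SStarstabft(D)$ is spectral by Remark~\ref{spectral}, say $\star=s_Y$ with $Y\neq\emptyset$. A short computation via $P^{s_Y}\cap D=\bigcap_{Q\in Y}(PD_Q\cap D)$ shows $\qspec^{s_Y}(D)=Y^{\mbox{\tiny\texttt{gen}}}$, which is nonempty (it contains $\qmax^\star(D)$, nonempty since $\star$ is of finite type) and closed under generizations. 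Moreover, $\star=\widetilde{\star}$ together with Proposition~\ref{spettrali}(4) gives $s_Y=s_{\Cl^{\mbox{\tiny\texttt{inv}}}(Y)}$, whence Proposition~\ref{spettrali}(1) yields $Y^{\mbox{\tiny\texttt{gen}}}=\Cl^{\mbox{\tiny\texttt{inv}}}(Y)$, an inverse-closed set.

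For bijectivity: $\Delta(\boldsymbol{s^\sharp}(Y))=\qspec^{s_Y}(D)=Y^{\mbox{\tiny\texttt{gen}}}=Y$ by inverse-closedness. Conversely, since $\star$ is of finite type every quasi-$\star$-prime lies below some quasi-$\star$-maximal, so $(\qmax^\star(D))^{\mbox{\tiny\texttt{gen}}}=\qspec^\star(D)$; combined with the identity $\widetilde{\star}=s_{\qmax^\star(D)}$ (recalled before Proposition~\ref{prop:retraction}) and Proposition~\ref{spettrali}(1), this gives $\boldsymbol{s^\sharp}(\Delta(\star))=s_{\qspec^\star(D)}=s_{\qmax^\star(D)}=\widetilde{\star}=\star$. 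To establish the homeomorphism I would verify that for any finitely generated ideal $J\subseteq D$, $(\boldsymbol{s^\sharp})^{-1}(\widetilde{\texttt{U}}_J)=\ucal(\texttt{D}(J))$, via the equivalences
$$
1\in J^{s_Y}\iff JD_P=D_P \text{ for all }P\in Y\iff Y\subseteq\texttt{D}(J);
$$
since these form a subbasis of $\SStarstabft(D)$ and a basis of $\xcal(D)$, respectively, continuity follows in both directions.

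Finally, $\boldsymbol{s^\sharp}(\varphi(P))=s_{\{P\}^{\mbox{\tiny\texttt{gen}}}}=s_{\{P\}}=\wedge_{\{D_P\}}=s(P)$, using Proposition~\ref{spettrali}(1). The principal technical obstacle is showing that $\Delta(\star)$ is genuinely \emph{inverse-closed} and not merely closed under generizations: this is precisely where one must exploit both stability ($\star=\widetilde{\star}$) and the finite-type property, combined through Proposition~\ref{spettrali}(4) to replace $Y^{\mbox{\tiny\texttt{gen}}}$ by $\Cl^{\mbox{\tiny\texttt{inv}}}(Y)$.
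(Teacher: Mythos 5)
The survey states Proposition \ref{closed-inverse} without proof (it defers to \cite{FiFoSp-1}), so there is no in-text argument to compare against; your proposal is the natural direct verification and, up to the caveats below, it is correct and complete: you identify $\qspec^{s_Y}(D)$ with the closure of $Y$ under generizations, use stability together with Proposition \ref{spettrali}(1) and (4) to see that this set is inverse-closed, and then match the subbasic open sets $\widetilde{\texttt{U}}_J$ of $\SStarstabft(D)$ (for $J$ finitely generated) with the basic open sets $\ucal(\texttt{D}(J))$ of $\xcal(D)$ through the equivalence $1\in J^{s_Y}\Leftrightarrow Y\subseteq\texttt{D}(J)$; since the $\ucal(\texttt{D}(J))$ form a basis of $\xcal(D)$, this single identity gives both continuity and openness of the bijection, and the final computation $\boldsymbol{s^\sharp}(\varphi(P))=s_{\{P\}^{\texttt{gen}}}=s_{\{P\}}=s(P)$ is fine.

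Two remarks. First, the detour through $\qmax^\star(D)$ and $\widetilde{\star}$ in checking $\boldsymbol{s^\sharp}\circ\Delta=\mathrm{id}$ is both unnecessary and slightly under-justified: the inclusion $(\qmax^\star(D))^{\texttt{gen}}\subseteq\qspec^\star(D)$ is not automatic for an arbitrary finite-type operation (it uses that $\star$ is spectral), whereas once you know $\star=s_Y$ and $\qspec^\star(D)=Y^{\texttt{gen}}$, Proposition \ref{spettrali}(1) gives $s_{\qspec^\star(D)}=s_{Y^{\texttt{gen}}}=s_Y=\star$ directly; likewise nonemptiness of $\Delta(\star)$ is immediate from $Y^{\texttt{gen}}\supseteq Y\neq\emptyset$, with no appeal to $\qmax^\star(D)$. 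Second, a caveat you inherit from the statement rather than introduce: with the survey's definition, quasi-$\star$-primes are \emph{nonzero}, so literally $\qspec^{s_Y}(D)=Y^{\texttt{gen}}\setminus\{(0)\}$, while every nonempty inverse-closed subset of $\spec(D)$ contains $(0)$; moreover, for $\star=\wedge_{\{K\}}$ (i.e.\ $Y=\{(0)\}$) one has $\qmax^\star(D)=\emptyset$, so your parenthetical justification of nonemptiness fails in exactly that case. The proposition, and your computation of $\qspec^{s_Y}(D)$, are correct under the convention that $(0)$ is counted as a quasi-$\star$-prime (equivalently, that $\Delta(\star)$ is $\qspec^\star(D)$ with the generic point adjoined); with that convention the rest of your argument goes through verbatim.
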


 As a consequence of the previous proposition and Theorem \ref{embedding}(1) we reobtain immediately Theorem \ref{stabili}, that is, the space  of all stable semistar operations of finite type on an integral domain is a spectral space.

\section{A topological version of Hilbert's Nullstellensatz}

As a first application of the general construction considered in the previous section,  we give now a topological version of Hilbert's Nullstellensatz.

Given a ring $R$, consider  the set  
$\texttt{Rd}({R}) := \{I \mid I \mbox{ ideal of } R \mbox{ and } I = \mbox{rad}(I) \}$ of radical ideals of $R$ and, more generally, the set
 $\texttt{Id}({R}) := \{I \mid I \mbox{ ideal of } R \}$, endowed with the {\it hull-kernel topology}, defined by taking as a basis for the open sets the subsets
 $$
\boldsymbol{U}(x_1, x_2, \dots, x_n) := \{ I \in \texttt{Id}( R)  \mid x_i \notin I \mbox{ for some }  i,\ 1\leq i \leq n \}\,,
$$
where $x_1, x_2, \dots, x_n \in R$.    We denote by $\texttt{Id}({R})^{\texttt{hk}}$ (respectively, $\texttt{Rd}({R})^{\texttt{hk}}$) the set of all the ideals of $R$  (respectively, of all the radical ideals of $R$), endowed with the  hull-kernel topology (respectively, with the induced topology from the hull-kernel topology of  $\texttt{Id}({R})$). In this situation, the inclusion maps   $\spec({R})\subseteq \texttt{Rd}(R) \subseteq \texttt{Id}({R})$ become topological embeddings; in other words the hull-kernel topology induced  on $\spec({R})$ coincides with the Zariski topology.  
\medskip

For deepening the study of the topological space $ \texttt{Rd}({R})^{\texttt{hk}}$ we introduce an analogue, in the inverse topology, of the space $\xcal( R)$ (Section 4).
\medskip

Let $X$ be a spectral space and let $\chius{(Y)}$ denote the closure of a subspace $Y$ in the given topology of $X$.   For the sake of simplicity, we denote by $X^\prime$  the spectral space $X^{\texttt{inv}}$, i.e., the set $X$  
endowed with the inverse topology \cite[Proposition 8]{ho}. 
We set  $\boldsymbol{\mathcal{X}^\prime}(X):= \{ Y \subseteq X \mid Y \neq\emptyset, \ Y = \chius{(Y)}\}$ and, 
for each quasi-compact open subspaces $\Omega$ of $X$, we set $
\boldsymbol{\mathcal{U}^\prime}(\Omega):=\{Y\in \boldsymbol{\mathcal{X^\prime}}(X) \mid Y\cap\Omega =\emptyset \} =\ucal(\Omega') $, where $\Omega':= X \setminus \Omega$.

It is well known that $(X^{\texttt{inv}})^{\texttt{inv}}$ coincides with $X$ (with the given spectral topology)   \cite[Proposition 8]{ho} hence, {\sl mutatis mutandis}, we can now apply Theorem \ref{embedding},  since  $\boldsymbol{\mathcal{X}^\prime}(X) = \xcal(X')$, and we easily get the following.

\begin{prop} \label{inv-embedding} 
Let $X$ be a spectral space and let $ X^\prime:=X^{\mbox{\rm\tiny\texttt{inv}}}$. 
\begin{itemize}
\item[\rm (1)] The space
$ \boldsymbol{\mathcal{X}^\prime}(X) := \{ Y \subseteq X \mid Y \neq\emptyset, \ Y = \mbox{\rm $\Cl$}{(Y)}\}$ is a spectral space, when endowed with the topology, called \emph{the Zariski topology}, having as a basis of open sets, the sets of the form 
$\boldsymbol{\mathcal{U}^\prime}(\Omega)$,
where $\Omega$ varies among the quasi-compact open subspaces of $X$.
 
%
\item[\rm (2)] The canonical map $\varphi': X^\prime\rightarrow \boldsymbol{\mathcal{X}^\prime}(X)$, defined by
$
\varphi'(x):=\{x\}^{\mbox{\rm\tiny\texttt{sp}}}
$, for each $x\in X$, is  a spectral embedding between spectral spaces.
%
\end{itemize}
\end{prop}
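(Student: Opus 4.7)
The plan is to deduce everything from Theorem \ref{embedding}, applied to the spectral space $X' := X^{\mbox{\tiny\texttt{inv}}}$, using the involutive nature of Hochster's inverse construction.

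First I would set up the key identification $\boldsymbol{\mathcal{X}^\prime}(X)=\xcal(X')$. By Hochster's duality \cite[Proposition 8]{ho}, $(X^{\mbox{\tiny\texttt{inv}}})^{\mbox{\tiny\texttt{inv}}}=X$, so the inverse-topology closure in $X'$ equals the closure in the original topology of $X$. Hence a nonempty subset $Y\subseteq X$ is closed in the given topology of $X$ if and only if $Y=\mbox{Cl}^{\mbox{\tiny\texttt{inv}}}_{X'}(Y)$; that is exactly membership in $\xcal(X')$. In particular, since $X'$ is spectral, part (1) will follow from Theorem \ref{embedding}(1) provided that the Zariski topology on $\xcal(X')$ (built from quasi-compact open subspaces of $X'$) coincides with the topology described in the proposition on $\boldsymbol{\mathcal{X}^\prime}(X)$.

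To verify the topological identification, I would use again Hochster's duality: the quasi-compact open subspaces of $X'$ are precisely the sets of the form $\Omega':=X\setminus\Omega$, where $\Omega$ is a quasi-compact open subspace of $X$ (they constitute a basis of closed sets of $(X')^{\mbox{\tiny\texttt{inv}}}=X$). For such $\Omega$, the generic basic open set of $\xcal(X')$ given by Theorem \ref{embedding}(1) is
\[
\ucal(\Omega')=\{Y\in\xcal(X')\mid Y\subseteq \Omega'\}=\{Y\in\boldsymbol{\mathcal{X}^\prime}(X)\mid Y\cap\Omega=\emptyset\}=\boldsymbol{\mathcal{U}^\prime}(\Omega),
\]
which is precisely the declared basic open set. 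Thus the two topologies coincide and part (1) is immediate.

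For part (2), I would compare $\varphi'$ with the canonical map $\varphi_{X'}:X'\rightarrow\xcal(X')$ of Theorem \ref{embedding}(3), which sends $x\mapsto\{x\}^{\mbox{\tiny\texttt{gen}}_{X'}}$. Since the partial order induced by the topology of $X'$ is the opposite of the one induced by the topology of $X$, the closure under generizations in $X'$ of a singleton $\{x\}$ is exactly the closure under specializations in $X$, that is $\{x\}^{\mbox{\tiny\texttt{sp}}}$. Therefore $\varphi_{X'}=\varphi'$ under the identification $\xcal(X')=\boldsymbol{\mathcal{X}^\prime}(X)$, and Theorem \ref{embedding}(3) delivers the spectral embedding claimed in (2).

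No real obstacle is expected: the only subtle point is the bookkeeping around which topology (direct or inverse) one is working in when transporting closures and quasi-compact opens between $X$ and $X'$. Once the involution $(X^{\mbox{\tiny\texttt{inv}}})^{\mbox{\tiny\texttt{inv}}}=X$ is invoked systematically, Theorem \ref{embedding} delivers both statements for free.
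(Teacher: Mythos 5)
Your proposal is correct and follows the paper's own route: the paper likewise identifies $\boldsymbol{\mathcal{X}^\prime}(X)$ with $\xcal(X')$ via $(X^{\mbox{\rm\tiny\texttt{inv}}})^{\mbox{\rm\tiny\texttt{inv}}}=X$, notes $\boldsymbol{\mathcal{U}^\prime}(\Omega)=\ucal(\Omega')$ with $\Omega':=X\setminus\Omega$, and then applies Theorem \ref{embedding} \emph{mutatis mutandis}, exactly as you do (including the observation that generizations in $X'$ are specializations in $X$ for part (2)).
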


 Suppose now that  $X:=\spec( R)$ is the prime spectrum of a commutative ring $R$, endowed with the Zariski topology. 
We recall that a basis of open sets of $X^{\texttt{inv}}$ is the collection of sets
$\{\texttt{V}(J) \mid J $  is a finitely generated ideal of $ R \}$
which makes $X^{\texttt{inv}}$ a spectral space  \cite[Proposition 8]{ho}.
\smallskip

\begin{oss} 
 With the notation introduced above,
 let  $\varphi': X' = \spec(R)^{\mbox{\rm\tiny\texttt{inv}}} \hookrightarrow
  \xcal(X')^{\mbox{\rm\tiny\texttt{zar}}}=
  \xcal^{\boldsymbol{\prime}}(X)^{\mbox{\rm\tiny\texttt{zar}}}$ 
 be the canonical topological embedding defined by $\varphi'(x):=\{x\}^{\mbox{\rm\tiny\texttt{sp}}}$. Then, it is easy to see that the map  $\psi := (\varphi')^{\mbox{\rm\tiny\texttt{inv}}}: X = 
 (\spec(R)^{\mbox{\rm\tiny\texttt{inv}}})^{\mbox{\rm\tiny\texttt{inv}}} 
 \hookrightarrow 
 \xcal^{\boldsymbol{\prime}}(X)^{\mbox{\rm\tiny\texttt{inv}}}$ 
 defined by $\psi(x):=\{x\}^{\mbox{\rm\tiny\texttt{gen}}}$  is a topological embedding (acting  like $\varphi$ as a set-theoretic map).
%
%
\end{oss}

The next result provides a topological version\ of Hilbert Nullstellensatz (see \cite{FiFoSp-2}).


\begin{teor}\label{prop:rad-x1} 
Let $R$ be a ring and let $ \boldsymbol{\mathcal{X}^\prime}( R) :=  \boldsymbol{\mathcal{X}^\prime}(\spec( R))$ be the spectral space of the non-empty Zariski closed subspaces of $\spec( R)$ (Proposition \ref{inv-embedding}). We can also consider the space $ \boldsymbol{\mathcal{X}^\prime}( R)$ as a spectral space endowed with the inverse topology  \cite[Proposition 8]{ho}.   Then,  for each $ C \in  \boldsymbol{\mathcal{X}^\prime}( R)$, the map: 
$$\mathscr{J}: \boldsymbol{\mathcal{X}^\prime}({R})^{\mbox{\rm\tiny\texttt{inv}}} \rightarrow \mbox{\rm\texttt{Rd}}({R})^{\mbox{\rm\tiny\texttt{hk}}} \mbox{ defined by }\boldsymbol{\mathscr{J}}( C) := \bigcap\{P\in\spec(R) \mid P\in C\}, $$
 is  a homeomorphism.
\end{teor}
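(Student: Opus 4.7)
The plan is to exhibit $\boldsymbol{\mathscr{J}}$ as a set-theoretic bijection via the classical Hilbert correspondence for spectra, and then to verify that it carries a natural subbasis of open sets on each side bijectively to one on the other.

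For bijectivity: for every $C \in \boldsymbol{\mathcal{X}^\prime}(R)$ one has $C = \texttt{V}(\boldsymbol{\mathscr{J}}(C))$ by the standard description of nonempty Zariski-closed subsets of $\spec(R)$, and for every proper radical ideal $I$ of $R$ one has $\boldsymbol{\mathscr{J}}(\texttt{V}(I)) = \sqrt{I} = I$. This yields a bijection between $\boldsymbol{\mathcal{X}^\prime}(R)$ and the proper elements of $\texttt{Rd}(R)$, with explicit inverse $I \mapsto \texttt{V}(I)$.

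Next I would select convenient subbases on each side. On $\texttt{Rd}(R)^{\texttt{hk}}$, the sets $\boldsymbol{U}(f) \cap \texttt{Rd}(R) = \{I \in \texttt{Rd}(R) \mid f \notin I\}$, $f \in R$, form a subbasis by the very definition of the hull-kernel topology. On $\boldsymbol{\mathcal{X}^\prime}(R)$, Proposition \ref{inv-embedding}(1) provides a basis of the Zariski topology, namely $\{\boldsymbol{\mathcal{U}^\prime}(\Omega) \mid \Omega \text{ quasi-compact open in } \spec(R)\}$, consisting of quasi-compact open sets (and closed under finite intersections). Since every quasi-compact open of $\spec(R)$ is a finite union of principal opens $\texttt{D}(f)$, and since $\boldsymbol{\mathcal{U}^\prime}(\texttt{D}(f_1)\cup \cdots \cup \texttt{D}(f_n)) = \bigcap_i \boldsymbol{\mathcal{U}^\prime}(\texttt{D}(f_i))$, the family $\{\boldsymbol{\mathcal{X}^\prime}(R) \setminus \boldsymbol{\mathcal{U}^\prime}(\texttt{D}(f)) \mid f \in R\}$ is a subbasis of open sets for $\boldsymbol{\mathcal{X}^\prime}(R)^{\texttt{inv}}$.

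The decisive computation is then: for every $f \in R$,
\[
\boldsymbol{\mathscr{J}}^{-1}\bigl(\boldsymbol{U}(f) \cap \texttt{Rd}(R)\bigr) = \{C \mid f \notin \boldsymbol{\mathscr{J}}(C)\} = \{C \mid C \cap \texttt{D}(f) \neq \emptyset\} = \boldsymbol{\mathcal{X}^\prime}(R) \setminus \boldsymbol{\mathcal{U}^\prime}(\texttt{D}(f)),
\]
and, by bijectivity, $\boldsymbol{\mathscr{J}}(\boldsymbol{\mathcal{X}^\prime}(R) \setminus \boldsymbol{\mathcal{U}^\prime}(\texttt{D}(f))) = \boldsymbol{U}(f) \cap \texttt{Rd}(R)$. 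Since subbasic opens correspond bijectively under $\boldsymbol{\mathscr{J}}$, the map is a homeomorphism.

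The only mildly delicate point is the bookkeeping at the degenerate end, namely ensuring that the conventions regarding the unit ideal versus the empty closed set are compatible so that the set-theoretic bijection really is onto $\texttt{Rd}(R)$; once this is settled, the rest is an immediate comparison of subbases via the single equality displayed above.
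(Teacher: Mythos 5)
Your strategy is the right one, and it is essentially the argument this theorem is meant to have: the classical correspondence between nonempty Zariski-closed subsets of $\spec(R)$ and proper radical ideals, followed by matching subbases, with all the topological content concentrated in the single equivalence $f\notin\boldsymbol{\mathscr{J}}(C)\iff C\cap\texttt{D}(f)\neq\emptyset$, which you state correctly. Two points, however, are used without justification and should be written out. First, your claim that the sets $\boldsymbol{\mathcal{X}^\prime}(R)\setminus\boldsymbol{\mathcal{U}^\prime}(\texttt{D}(f))$ form a subbasis for the inverse topology needs the basic open sets $\boldsymbol{\mathcal{U}^\prime}(\Omega)$ to be quasi-compact; this is \emph{not} part of the statement of Proposition \ref{inv-embedding}(1), which only gives them as a basis. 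It is true, and easy: if $\Omega\neq\spec(R)$, the Zariski-closed set $C_0:=\spec(R)\setminus\Omega$ belongs to $\boldsymbol{\mathcal{U}^\prime}(\Omega)$ and contains every element of it, so any cover of $\boldsymbol{\mathcal{U}^\prime}(\Omega)$ by basic open sets $\boldsymbol{\mathcal{U}^\prime}(\Omega_i)$ admits a one-element subcover (pick $i_0$ with $C_0\cap\Omega_{i_0}=\emptyset$); the case $\Omega=\spec(R)$ gives the empty set. Combined with the fact that every quasi-compact open of $\boldsymbol{\mathcal{X}^\prime}(R)$ is a finite union of basic ones and $\boldsymbol{\mathcal{U}^\prime}(\texttt{D}(f_1)\cup\cdots\cup\texttt{D}(f_n))=\bigcap_i\boldsymbol{\mathcal{U}^\prime}(\texttt{D}(f_i))$, this yields your subbasis claim. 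Second, the ``degenerate end'' you flag must actually be settled rather than deferred: with the usual convention the unit ideal lies in $\texttt{Rd}(R)$, whereas $\emptyset\notin\boldsymbol{\mathcal{X}^\prime}(R)$, so $\boldsymbol{\mathscr{J}}$ as written is a bijection onto the \emph{proper} radical ideals. Either one reads $\texttt{Rd}(R)$ with that convention, or one extends the map to $\boldsymbol{\mathcal{X}^\prime}(R)\cup\{\emptyset\}$ by $\emptyset\mapsto R$; since $R$ belongs to no $\boldsymbol{U}(f)$, the subbase computation is unaffected in either reading. With these two short additions your proof is complete and coincides with the natural route suggested by the framework of Section 4 (and by the cited source \cite{FiFoSp-2}).
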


\medskip

Related to the previous Theorem \ref{prop:rad-x1}, it is possible to prove, with a standard argument based on Theorem \ref{spec-charact}, that the set of all ideals of a ring is also a spectral space.  More precisely:
 \begin{prop} {\rm (cf. \cite{FiFoSp-2})}
 Let $\mbox{\rm\texttt{Id}}({R})$ be the space of all ideals of a ring $R$, endowed with the hull-kernel topology. Then,  $\mbox{\rm\texttt{Id}}({R})$  is  a spectral space, having $\mbox{\rm\texttt{Rd}}( R)$ (endowed with the hull-kernel topology) as a spectral subspace.
\end{prop}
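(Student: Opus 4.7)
The plan is to apply Theorem \ref{spec-charact}, specifically the equivalence between spectrality and the existence of ultrafilter limit points with respect to a subbasis of open sets, exactly as was done in Example \ref{R-A-B} for the space of intermediate rings.

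First I would verify that $\texttt{Id}(R)^{\texttt{hk}}$ is $\mathrm{T}_0$: if $I\neq J$, then (without loss of generality) there is some $x\in I\setminus J$, and then $\boldsymbol{U}(x)$ is an open neighborhood of $J$ not containing $I$. Next, take as subbasis for the hull-kernel topology the collection
\[
\boldsymbol{\mathcal S}:=\{\boldsymbol{U}(x)\mid x\in R\},\qquad \boldsymbol{U}(x)=\{I\in\texttt{Id}(R)\mid x\notin I\},
\]
since clearly $\boldsymbol{U}(x_1,\dots,x_n)=\boldsymbol{U}(x_1)\cup\cdots\cup\boldsymbol{U}(x_n)$. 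Given any ultrafilter $\ms U$ on $\texttt{Id}(R)$, I would define the candidate limit
\[
I_{\ms U}:=\{x\in R\mid \{I\in\texttt{Id}(R)\mid x\in I\}\in\ms U\}.
\]
The core of the proof consists in showing that $I_{\ms U}$ is an ideal of $R$ and that $I_{\ms U}\in\texttt{Id}(R)_{\boldsymbol{\mathcal S}}(\ms U)$.

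To show $I_{\ms U}$ is an ideal, I would use that $\{I\mid 0\in I\}=\texttt{Id}(R)\in\ms U$ (so $0\in I_{\ms U}$), that $\{I\mid x\in I\}\cap\{I\mid y\in I\}\subseteq\{I\mid x-y\in I\}$ (so closure under differences follows from the filter axioms), and that $\{I\mid x\in I\}\subseteq\{I\mid rx\in I\}$ for every $r\in R$ (so absorption holds). For the limit property, unravelling definitions, $I_{\ms U}\in\boldsymbol{U}(x)$ means $x\notin I_{\ms U}$, i.e.\ $\{I\mid x\in I\}\notin\ms U$, which by the ultrafilter dichotomy is equivalent to $\boldsymbol{U}(x)=\{I\mid x\notin I\}\in\ms U$, exactly as required. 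Thus $\texttt{Id}(R)_{\boldsymbol{\mathcal S}}(\ms U)$ is nonempty, and Theorem \ref{spec-charact}(iv) gives that $\texttt{Id}(R)^{\texttt{hk}}$ is spectral.

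For the subspace statement, I would repeat the same construction starting from an ultrafilter $\ms U$ on $\texttt{Rd}(R)$ and the induced subbasis $\{\boldsymbol{U}(x)\cap\texttt{Rd}(R)\mid x\in R\}$. The same argument shows that $I_{\ms U}:=\{x\in R\mid\{I\in\texttt{Rd}(R)\mid x\in I\}\in\ms U\}$ is an ideal; the only extra verification needed is radicality. But if $x^n\in I_{\ms U}$, then since every $I\in\texttt{Rd}(R)$ satisfies $x^n\in I\Leftrightarrow x\in I$, the sets $\{I\in\texttt{Rd}(R)\mid x^n\in I\}$ and $\{I\in\texttt{Rd}(R)\mid x\in I\}$ coincide, so the latter lies in $\ms U$ and $x\in I_{\ms U}$. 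Hence $I_{\ms U}\in\texttt{Rd}(R)$, and Theorem \ref{spec-charact} again applies. I expect no serious obstacle: the only point requiring care is the (purely bookkeeping) passage between ``$x\in I$'' and ``$x\notin I$'' when matching the definition of the ultrafilter limit set with the subbasis $\boldsymbol{\mathcal S}$, which is an open rather than a closed subbasis.
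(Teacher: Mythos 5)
Your proof is correct and is precisely the ``standard argument based on Theorem \ref{spec-charact}'' that the paper invokes (without writing it out) for this proposition: you exhibit the subbasis $\{\boldsymbol{U}(x)\mid x\in R\}$, check the T$_0$ property, and produce the ultrafilter limit ideal $I_{\ms U}$, exactly in the style of Example \ref{R-A-B}, with the radicality check handling the subspace $\mbox{\rm\texttt{Rd}}(R)$. Nothing essential is missing.
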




The following Hasse diagram summarizes some of the results proved above.
\begin{equation*}
\begin{tikzcd}
& & \mbox{\rm\texttt{Id}}(R)^{\mbox{\rm\tiny\texttt{hk}}} \\
\xcal'(\spec(R))^{\mbox{\rm\tiny\texttt{inv}}} & \simeq &  \mbox{\rm\texttt{Rd}}({R})^{\mbox{\rm\tiny\texttt{hk}}}\arrow[hook]{u} & & \xcal(\spec(R))^{\mbox{\rm\tiny\texttt{zar}}}\\
\spec(R)^{\mbox{\rm\tiny\texttt{zar}}} \arrow[hook]{u} \arrow[equals]{rr} & & \spec(R)^{\mbox{\rm\tiny\texttt{hk}}} \arrow[hook]{u}\arrow[equals]{rr} & & \spec(R)^{\mbox{\rm\tiny\texttt{zar}}}\arrow[hook]{u}
\end{tikzcd}
\end{equation*}



\section{The  space of {\rm \texttt{eab}} semistar operations of finite type}

In the present section, we give another application of  Theorem \ref{embedding}.
 More precisely, we   apply the construction of the space $\xcal (X)$ to the case of the Riemann-Zariski spectral space $X:= \Zar(D)$ of all valuation overrings of an integral domain $D$ (endowed with the Zariski topology, see (1.2)).

Let $\star$ be a semistar operation on an integral domain $D$. We say that $\star$ is an \emph{\texttt{eab} semistar operation} (respectively, an \emph{\texttt{ab} semistar operation}) if, for every $F,G,H\in\f(D)$ (respectively, for every $F\in\f(D)$, $G,H\in\FF(D)$) the inclusion $(FG)^\star\subseteq(FH)^\star$ implies $G^\star\subseteq H^\star$. Note that, if $\star$ is \texttt{eab}, then $\stf$ is also \texttt{eab}, since $\star$ and $\stf$ agree on finitely generated fractional ideals. The concepts of \texttt{eab} and \texttt{ab} operations coincide on finite-type operations, but not in general    \cite{fo-lo-2009,fo-lo-ma}.

It is easy to see that a {\it valuative semistar operation}, i.e., a semistar operation of the type $\wedge_{\calbW}$, where $\calbW \subseteq \Zar(D)$, is an \texttt{eab} semistar operation. In particular, the $\mbox{\it\texttt{b}}$-operation, where $\mbox{\it\texttt{b}}  := \wedge_{\Zar(D)}$, is  an \texttt{eab} semistar operation of finite type, since  $\Zar(D)$ is quasi-compact (Corollary \ref{qcover}).

\medskip

To every semistar operation $\star \in \SStar(D)$ we can associate a map $\sta$ defined by
\begin{equation*}
F^{\sta}:=\bigcup\{((FG)^\star:G^\star)\mid G\in\f(D)\}
\end{equation*}
for every $F \in\f(D)$, and then we can extend it to arbitrary $D$-modules $E \in \FF(D)$ 
by setting  $E^{\sta} :=\bigcup\{F^{\sta}\mid F\subseteq E, \ F\in\f(D)\}$. The map $\sta$ is always an \texttt{eab} semistar operation of finite type on $D$. Moreover,   $\star= \sta$ if and only if $\star$ is an \texttt{eab} semistar  operation of finite type  and, if $\star$ is an \texttt{eab} semistar operation, then $\sta =\stf$    \cite[Proposition 4.5]{folo-2001}.

\begin{oss} (a)
Let $T$ be an overring of $D$, and let $\star_T$ be a semistar operation on $T$. Then, we can define a semistar operation $\star$ on $D$ by $\star:=\star_T\circ\wedge_{\{T\}}$, i.e., $E^\star:=(ET)^{\star_T}$ for every $E\in\FF(D)$. If now $F\in\f(T)$, then
$$
\begin{array}{rl}
F^{\star_a} =& \hskip -6pt   \bigcup \{((FG)^\star:G^\star) \mid  G\in\f(D) \} =
           \bigcup \{((FGT)^{\star_T}:(GT)^{\star_T}) \mid  G\in\f(D)\}=\\
=& \hskip -6pt  \bigcup\{ ((FTH)^{\star_T}:H^{\star_T}) \mid  H\in\f(T)\}=(FT)^{(\star_T)_a}=F^{(\star_T)_a}.
\end{array}
$$
Hence, for every $E\in\FF(D)$, $E^{\star_a}=(ET)^{(\star_T)_a}$, that is, $\star_a=(\star_T)_a\circ\wedge_{\{T\}}$.

(b) W. Krull  only considered the concept of an ``\texttt{a}rithmetisch
\texttt{b}rauchbar'' operation(for short \texttt{ab}-operation, as above)  \cite{Krull:1936}. He did not
consider the concept of ``\texttt{e}ndlich \texttt{a}rithmetisch \texttt{b}rauchbar'' operation   (or, more simply, \texttt{eab}-operation as above).
This concept stems from the original version of Gilmer's book \cite{gi-1968}. 

(c) Denote by $\SStarval(D)$ (respectively,  $\SStareab(D)$; $\SStareabf(D)$) the set of valutative (respectively, \texttt{eab}; \texttt{eab}   of finite type) semistar operations on $D$. Every valutative operation is \texttt{eab}, but not every \texttt{eab} operation is valutative; however, the two definitions agree on finite-type operations, i.e.,
$$
\SStareab(D) \cap \SStarf(D) =: \SStareabf(D) = \SStarval(D) \cap \SStarf(D)\,,
$$
(see, for instance, \cite[Corollary 5.2]{folo-2001}). 
A similar  relationship holds between spectral and stable semistar operations, with the valutative operations corresponding to the spectral ones and the \texttt{eab} operations to the stable ones, i.e., every spectral semistar operation is stable but not every stable semistar operation is spectral, however  $
\SStarspf(D)=  \SStarstab(D) \cap \SStarf(D)=\SStarstabft(D)$ (Remark \ref{sp-st}(a)).

Recall also that there are examples of \texttt{eab} semistar operations which are quasi-spectral but not valutative \cite[Example 15]{fo-lo-2009}.
\end{oss}

It is not hard to prove the following statement, which is a companion to Proposition \ref{prop:retraction}.

\begin{prop} {\rm   (cf. \cite[Proposition 5.2]{FiFoSp-4})}
Let $D$ be an integral domain and let $\Phi_{\mbox{\it\tiny\texttt{a}}}:\SStar(D)$ $\rightarrow\SStar(D)$ be the map defined by  $\star\mapsto\sta$. Then:
	\begin{enumerate} 
		\item[\rm(1)] The image of $\Phi_{\mbox{\it\tiny\texttt{a}}}$ coincides with $\SStareabf(D)$.
		\item[\rm(2)] The map $\Phi_{\mbox{\it\tiny\texttt{a}}}$ is continuous in the Zariski topology.
		\item[\rm(3)] The map $\Phi_{\mbox{\it\tiny\texttt{a}}}$ is  a topological retraction of $\SStar(D)$ onto $\SStareabf(D)$.
	\end{enumerate}
\end{prop}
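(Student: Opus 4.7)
My plan is to prove the three parts in order, with parts (1) and (3) being essentially bookkeeping once part (2) is handled.

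For part (1), the inclusion $\mathrm{Im}(\Phi_{\mbox{\it\tiny\texttt{a}}})\subseteq \SStareabf(D)$ is immediate from the fact, recalled in the text, that $\sta$ is always an \texttt{eab} semistar operation of finite type. For the reverse inclusion, if $\star\in\SStareabf(D)$, then the characterization quoted in the paper ($\star=\sta$ iff $\star$ is \texttt{eab} of finite type, see \cite[Proposition 4.5]{folo-2001}) yields $\star=\sta=\Phi_{\mbox{\it\tiny\texttt{a}}}(\star)$, whence $\star\in\mathrm{Im}(\Phi_{\mbox{\it\tiny\texttt{a}}})$.

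For part (2), I will show that $\Phi_{\mbox{\it\tiny\texttt{a}}}^{-1}(\texttt{V}_E)$ is open in the Zariski topology for every $E\in\FF(D)$. By the definition $E^{\sta}=\bigcup\{F^{\sta}\mid F\subseteq E,\ F\in\f(D)\}$, one has $1\in E^{\sta}$ if and only if $1\in F^{\sta}$ for some finitely generated $F\subseteq E$, so
\[
\Phi_{\mbox{\it\tiny\texttt{a}}}^{-1}(\texttt{V}_E)=\bigcup\{\Phi_{\mbox{\it\tiny\texttt{a}}}^{-1}(\texttt{V}_F)\mid F\subseteq E,\ F\in\f(D)\},
\]
reducing the problem to $F\in\f(D)$. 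For such $F$, the formula $F^{\sta}=\bigcup\{((FG)^\star:G^\star)\mid G\in\f(D)\}$ shows that $1\in F^{\sta}$ iff there exists $G\in\f(D)$ with $G^\star\subseteq(FG)^\star$. Writing $G=(g_1,\ldots,g_n)$, the latter is equivalent to $g_i\in(FG)^\star$ for each $i$, equivalently, by axiom $(\mathbf{\star_4})$, to $1\in(g_i^{-1}FG)^\star$ for each $i$. Therefore
\[
\Phi_{\mbox{\it\tiny\texttt{a}}}^{-1}(\texttt{V}_F)=\bigcup_{G=(g_1,\ldots,g_n)\in\f(D)}\,\bigcap_{i=1}^n\texttt{V}_{g_i^{-1}FG},
\]
which is a union of finite intersections of subbasic open sets, hence open. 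Combining the two displays gives the desired continuity.

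For part (3), I combine (1), (2) and idempotency: since $\sta\in\SStareabf(D)$ for every $\star$, part (1) applied to $\sta$ yields $(\sta)_a=\sta$, i.e.\ $\Phi_{\mbox{\it\tiny\texttt{a}}}\circ\Phi_{\mbox{\it\tiny\texttt{a}}}=\Phi_{\mbox{\it\tiny\texttt{a}}}$, and in particular $\Phi_{\mbox{\it\tiny\texttt{a}}}$ fixes every element of $\SStareabf(D)$. Together with the continuity from (2), this is precisely the definition of a topological retraction of $\SStar(D)$ onto $\SStareabf(D)$.

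The only non-routine step is part (2). The main obstacle I anticipate is handling the multi-generator case cleanly: one must be careful to translate the condition $G^\star\subseteq (FG)^\star$ into a finite conjunction of membership conditions of the form $1\in(\cdot)^\star$ that can be recognized as subbasic open sets $\texttt{V}_{(\cdot)}$, and this relies critically on axiom $(\mathbf{\star_4})$ applied generator-by-generator rather than to the module $G$ as a whole. Once this translation is in hand, the rest is a direct combination of unions and intersections, mirroring the proof strategy used in Proposition \ref{prop:retraction}.
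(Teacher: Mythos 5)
Your proof is correct, and since the survey gives no argument of its own for this proposition (it defers to \cite{FiFoSp-4}, in preparation, noting only that the statement ``is not hard to prove''), there is nothing to diverge from: your reduction of $1\in E^{\sta}$ to finitely generated $F\subseteq E$, followed by the translation of $G^\star\subseteq (FG)^\star$ into the finite conjunction $1\in (g_i^{-1}FG)^\star$ via $(\mathbf{\star_4})$, is exactly the natural computation, mirroring the subbasis argument for $\SStarf(D)$ given in Section 3, and parts (1) and (3) follow from the quoted facts about $\sta$ just as you say. The only point left implicit, and it is harmless, is that the generators $g_i$ of $G$ should be taken nonzero so that $g_i^{-1}$ makes sense.
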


The relation between valutative operations and subsets of $\Zar(D)$ behave very similarly to the relation between spectral operations and subsets of $\spec(D)$  (Proposition \ref{spettrali}).

\begin{prop}\label{zar-inv}
	 Let $D$ be an integral domain and let $Y$ and $Z$ be two nonempty subsets of $\mbox{\rm\Zar}(D)$.
	Then, the following statements hold. 
	\begin{enumerate}
			\item[\rm(1)]  $\wedge_Y=\wedge_Z$ if and only if $Y^{\mbox{\rm\tiny\texttt{gen}}}=Z^{\mbox{\rm\tiny\texttt{gen}}}$. 		\item[\rm(2)]  $\wedge_Y$ is of finite type if and only if $Y$ is quasi-compact\;  \cite[Proposition 4.5]{FiSp}.

		\item[\rm(3)]    $(\wedge_Y)_{\!{_f}}=(\wedge_Z)_{\!{_f}}$ if and only if 
		 $\mbox{\rm{\texttt{Cl}}}^{\mbox{\rm\tiny\texttt{inv}}}(Y)=
		 \mbox{\rm{\texttt{Cl}}}^{\mbox{\rm\tiny\texttt{inv}}}(Z)$ \; \cite[Theorem 4.9]{fifolo2}.
		 \item[\rm(4)]    $(\wedge_Y)_{\!{_f}}= \wedge_{\mbox{\rm\footnotesize{\texttt{Cl}}}^{\mbox{\rm\tiny\texttt{inv}}}(Y)}$ \; \cite[Corollary 4.17]{fifolo2}.
	\end{enumerate}
\end{prop}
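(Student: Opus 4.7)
The four statements are direct analogues for the valutative setting of Proposition \ref{spettrali} for spectral operations, and I would prove them in parallel, using the machinery of the inverse topology and ultrafilter limits on the spectral space $\Zar(D)$.

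For (1), the starting observation is that the partial order induced by the Zariski topology on $\Zar(D)$ is $V\leq W \Leftrightarrow W\subseteq V$, so $V^{\mbox{\tiny\texttt{gen}}}=\{W\in\Zar(D)\mid V\subseteq W\}$ consists of the valuation overrings of $V$. Since $V\subseteq W$ trivially implies $EV\subseteq EW$ for every $E\in\FF(D)$, I immediately get $\wedge_Y=\wedge_{Y^{\mbox{\tiny\texttt{gen}}}}$: if $W\in Y^{\mbox{\tiny\texttt{gen}}}$ then $W\supseteq V$ for some $V\in Y$, whence $E^{\wedge_Y}\subseteq EV\subseteq EW$. This settles the ``if'' direction. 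For the converse, given $W\in Y^{\mbox{\tiny\texttt{gen}}}\setminus Z^{\mbox{\tiny\texttt{gen}}}$, I would separate $\wedge_Y$ from $\wedge_Z$ by a principal (hence finitely generated) fractional ideal built from an element that is contained in every member of $Z$ but escapes $W$, using the standard theory of valuation rings of $K$.

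For (2), one direction is Corollary \ref{qcover}; for the converse, I would invoke Proposition \ref{basic-ultra}(4) for the standard basis $\boldsymbol{\mathcal B}$ of $\Zar(D)$: if $\wedge_Y$ is of finite type but $Y$ is not quasi-compact, an ultrafilter $\ms U$ on $Y$ with limit point $Z_{\ms U}\notin Y$ yields a finitely generated ideal witnessing a contradiction. The main work is statement (4); (3) follows at once by combining (4), (1), and the fact \cite[Remark 2.2]{fifolo2} that $\Cl^{\mbox{\tiny\texttt{inv}}}(Y)$ is generization-closed in a spectral space, so $\wedge_{\Cl^{\mbox{\tiny\texttt{inv}}}(Y)}=\wedge_{\Cl^{\mbox{\tiny\texttt{inv}}}(Z)}$ if and only if $\Cl^{\mbox{\tiny\texttt{inv}}}(Y)=\Cl^{\mbox{\tiny\texttt{inv}}}(Z)$. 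For (4) itself, first note that $\Cl^{\mbox{\tiny\texttt{inv}}}(Y)$ is closed in the inverse topology of the spectral space $\Zar(D)$, hence itself spectral and quasi-compact in the Zariski topology; by (2), $\wedge_{\Cl^{\mbox{\tiny\texttt{inv}}}(Y)}$ is of finite type. The inclusion $Y\subseteq \Cl^{\mbox{\tiny\texttt{inv}}}(Y)$ gives $\wedge_{\Cl^{\mbox{\tiny\texttt{inv}}}(Y)}\boldsymbol{\preceq}\wedge_Y$, and by maximality of $(\wedge_Y)_f$ among finite-type operations below $\wedge_Y$, I obtain $\wedge_{\Cl^{\mbox{\tiny\texttt{inv}}}(Y)}\boldsymbol{\preceq}(\wedge_Y)_f$.

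The hard part is the reverse inequality $(\wedge_Y)_f\boldsymbol{\preceq}\wedge_{\Cl^{\mbox{\tiny\texttt{inv}}}(Y)}$, which I would establish elementwise on $F=(e_1,\ldots,e_n)\in\f(D)$ via an ultrafilter transfer. Since $\Cl^{\mbox{\tiny\texttt{inv}}}(Y)=(\Cl^{\mbox{\tiny\texttt{cons}}}(Y))^{\mbox{\tiny\texttt{gen}}}$, any $W\in\Cl^{\mbox{\tiny\texttt{inv}}}(Y)$ contains the ultrafilter limit $Z_{\ms U}$ of some ultrafilter $\ms U$ on $Y$, so it suffices to prove $F^{\wedge_Y}\subseteq FZ_{\ms U}$. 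The key observation is that in each valuation overring $V$ the finitely generated module $FV$ is principal, generated by some $e_{i(V)}\in F$; the assignment $V\mapsto i(V)$ partitions $Y$ into finitely many pieces, and because ultrafilters respect finite partitions, $\ms U$ singles out exactly one index $i_0$ with $\{V\in Y\mid FV=e_{i_0}V\}\in\ms U$. Then for every $z\in F^{\wedge_Y}$ and every $V$ in this $\ms U$-large set, $z/e_{i_0}\in V$, so by the very definition of the ultrafilter limit point $z/e_{i_0}\in Z_{\ms U}$, whence $z\in e_{i_0}Z_{\ms U}\subseteq FZ_{\ms U}\subseteq FW$. This finite-partition trick, which crucially depends on the valuation property and on $F$ being finitely generated, is the main technical obstacle and the step where all the ingredients (spectrality of $\Zar(D)$, ultrafilter limits, and principality of finitely generated ideals over valuation rings) come together.
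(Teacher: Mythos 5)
The only part of this proposition the paper proves is (1) (parts (2)--(4) are quoted from \cite{FiSp} and \cite{fifolo2}), and it is precisely in the forward direction of (1) that your argument has a genuine gap. You propose, given $W\in Y^{\mbox{\rm\tiny\texttt{gen}}}\setminus Z^{\mbox{\rm\tiny\texttt{gen}}}$, to separate $\wedge_Y$ from $\wedge_Z$ by a principal fractional ideal generated by an element lying in every member of $Z$ but outside $W$. Such an element need not exist, and in fact no finitely generated witness need exist at all. Take the paper's own example from Section 3: $D$ Noetherian, integrally closed, of dimension $\geq 2$ (say $D=k[x,y]$), $Y:=\Zar(D)$, $Z:=\boldsymbol{\mathcal{D}}$ the set of Noetherian valuation overrings, and $W$ a valuation overring of rank $\geq 2$. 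Then $W\in Y^{\mbox{\rm\tiny\texttt{gen}}}\setminus Z^{\mbox{\rm\tiny\texttt{gen}}}$ (no DVR sits inside a rank-two valuation ring), but $\bigcap\{V\mid V\in Z\}=D\subseteq W$, so no element lies in every member of $Z$ and escapes $W$; worse, $(\wedge_Z)_{\!{_f}}=\mbox{\it\texttt{b}}=(\wedge_Y)_{\!{_f}}$, so $\wedge_Y$ and $\wedge_Z$ agree on every finitely generated --- in particular every principal --- fractional ideal, even though $\wedge_Y\neq\wedge_Z$. Any separating module must therefore be infinitely generated, and this is exactly what the paper's proof produces: since no member of $Z$ is contained in $W$, for each $W'\in Z$ one picks $x_{W'}\in W'\setminus W$, so that $x_{W'}^{-1}\in M_W$, and sets $I:=\sum_{W'\in Z}x_{W'}^{-1}D\subseteq M_W$; then $1\in I^{\wedge_Z}$ (because $x_{W'}^{-1}\in I$ and $x_{W'}\in W'$), while $I^{\wedge_Y}\subseteq IV_0\subseteq M_W$ for any $V_0\in Y$ with $V_0\subseteq W$, so $1\notin I^{\wedge_Y}$, contradicting $\wedge_Y=\wedge_Z$. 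The easy direction, via $\wedge_Y=\wedge_{Y^{\mbox{\rm\tiny\texttt{gen}}}}$, you share with the paper.

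Concerning the remaining parts: your ultrafilter proof of (4) (principality of $FV$ over a valuation ring, the finite-partition property of ultrafilters, and $\Cl^{\mbox{\rm\tiny\texttt{inv}}}(Y)=(\Cl^{\mbox{\rm\tiny\texttt{cons}}}(Y))^{\mbox{\rm\tiny\texttt{gen}}}$ together with the description of the constructible closure by ultrafilter limit points) is sound and is in the spirit of the proofs in \cite{fifolo2}, which the survey merely cites; your deduction of (3) from (1), (4) and generization-closedness of inverse-closed sets is also correct. Your converse of (2), however, is not a proof as written: an ultrafilter on $Y$ with limit point outside $Y$ exists whenever $Y$ fails to be closed in the constructible topology, which happens for many quasi-compact sets too, so its mere existence cannot yield the contradiction you allude to, and the ``finitely generated ideal witnessing a contradiction'' is never exhibited. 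A clean repair is to deduce it from (1) and (4): since open sets are closed under generization, $Y$ is quasi-compact if and only if $Y^{\mbox{\rm\tiny\texttt{gen}}}$ is; if $Y$ is not quasi-compact, then $Y^{\mbox{\rm\tiny\texttt{gen}}}$ is not quasi-compact, hence not inverse-closed, so $\Cl^{\mbox{\rm\tiny\texttt{inv}}}(Y)\supsetneq Y^{\mbox{\rm\tiny\texttt{gen}}}$; were $\wedge_Y$ of finite type, (4) would give $\wedge_Y=\wedge_{\Cl^{\mbox{\rm\tiny\texttt{inv}}}(Y)}$ and then (1) would force $Y^{\mbox{\rm\tiny\texttt{gen}}}=\Cl^{\mbox{\rm\tiny\texttt{inv}}}(Y)$, a contradiction.
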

Note that $Y^{\mbox{\rm\tiny\texttt{gen}}} = \{ÊV \in \mbox{\rm\Zar}(D) \mid V \supseteq V_0, \mbox{ for some } V_0 \in Y \}$.
For the statement (1), assume first that $\wedge_Y=\wedge_Z$. Let $V$ be a valuation domain such that $V\in Y^{\mbox{\rm\tiny\texttt{gen}}}\setminus Z^{\mbox{\rm\tiny\texttt{gen}}}$. 
Then, for any $W\in Z$, we can pick an element $x_W\in W\setminus V$. 
It follows that $I:=(x_W^{-1} \mid W\in Z)\subseteq M_V$, where $M_V$ is the maximal ideal of $V$. 
Thus, if $V_0\in Y$ is such that $V_0\subseteq V$ (such a $V_0$ exists since $V\in Y^{\mbox{\rm\tiny\texttt{gen}}}$),
 we have $I V_0\subseteq M_{V_0}$ and, in particular, 
$1\notin I^{\wedge_Y}$. 
On the other hand, clearly $1\in I^{\wedge_Z}$, a contradiction.  The converse it is straightforward since, for each $Y \subseteq 
\mbox{\rm\Zar}(D)$, $\wedge_Y = \wedge_{Y^{\mbox{\rm\tiny\texttt{gen}}}}$. 
\medskip

\begin{oss}
Since $\mbox{\it \texttt{b}}=\wedge_{\Zar(D)}$ is a semistar operation of finite type (and this can be proved completely independently from the topological point of view, see \cite[Proposition 6.8.2]{swhu} and \cite[Remark 4.6]{FiSp}), from Proposition \ref{zar-inv} we get a new proof   of the fact that $\Zar(D)$ is a quasi-compact space (this is a special case of Zariski's theorem \cite[Theorem 40, page 113]{zs}).
\end{oss}

 The embedding $\iota:\overr(D)\rightarrow\SStarf(D)$ (Proposition \ref{immersione}) restricts to an embedding
  $\Zar(D)\hookrightarrow \SStareabf(D)$, while the image of the restriction $\pi|_{\SStareabf(D)}$ of the canonical map $\pi:\SStarf(D)\rightarrow \overr(D)$ (defined by $\star\mapsto D^\star$) coincides with $\overric(D)$, i.e., with  the space of the overrings of $D$ that are integrally closed in $K$ (since, by a  well known Krull's theorem, every integrally closed ring can be represented as an intersection of valuation rings   \cite[Theorem 6, page 15]{zs}).

\medskip

Using the $\mbox{\it \texttt{b}}$-operation, we can introduce a general version of the classical Kronecker  fun\-ction ring, introduced by L. Kronecker in the case of Dedekind domains. Let $\X$ be is an indeterminate over $D$ and let $\boldsymbol{c}(h)$ be the content of a polynomial $h \in D[\X]$ (i.e., the ideal of $D$ generated by the coefficients of $h$).  Then,  we set: 

$$
\begin{array}{rl}
\Kr(D):=\mbox{Kr}(D,\mbox{\it \texttt{b}}) :=&  \hskip -7pt\{ f/g  \mid \, f,g \in D[\X], \ g 
\neq 0, 
\; 
\mbox{ 
	with } \boldsymbol{c}(f)^{\mbox{\it \tiny\texttt{b}}}
\subseteq \boldsymbol{c}(g)^{\mbox{\it \tiny\texttt{b}}} \,\} \\
=& \hskip -7pt\bigcap\{ V(\X) \mid V \in \Zar(D)\},
\end{array}
$$
 where $V(\X)$ denotes the   Gaussian (or trivial) extension of $V$ to $K(\X)$, i.e., $V(\X):= V[\X]_{(MV[\X])}$.
This is a B\'ezout domain with quotient field $K(\X)$, called {\it  the $b$-Kronecker function ring  of $D$} (see  \cite[Definition  3.2, Corollary 3.4(2) and Theorem 5.1]{folo-2001}, \cite[Theorem 14]{folo-2006} and \cite[Theorem 32.11]{gi}).  
It follows immediately that the localization map $\spec(\Kr(D))\longrightarrow \Zar(\Kr(D))$ (defined by $P\mapsto\Kr(D)_P$) is actually an homeomorphism. Moreover, the map $\Psi:\Zar(D)\longrightarrow\Zar(\Kr(D))$ (defined by  $V\mapsto V(\X)$) is a homeomorphism  \cite[Propositions 3.1 and 3.3]{fifolo2}, so that $\spec(\Kr(D))$ realizes $\Zar(D)$ as a spectral space  \cite[Theorem 2]{dofo-86}.

In particular,   the homeomorphism (and so the isomorphism of partially ordered sets) that we denote by $\theta$, from  $\spec(\Kr(D))$  to $\Zar(D)$ induces a 1-1 correspondence
 $\boldsymbol{\Theta}_{\!{_0}}$ between the set  $\{ Y\subseteq\spec(\Kr(D)) \mid Y=Y^\downarrow\}$ (where  $Y^\downarrow:= \{z \in \spec(\Kr(D))\mid z \leq  y,$ for some $y \in Y\} =  {Y}^{\texttt{gen}}$) 
 and the set   $\{ \calbW \subseteq\Zar(D) \mid \calbW = \calbW^{\uparrow}\} $  
 (where  $\calbW^{\uparrow}:= \{W' \in \Zar(D) \mid W' \supseteq W,$ for some $W \in \calbW\} =  {\calbW}^{\texttt{gen}}$). 
 Therefore $\boldsymbol{\Theta}_{\!{_0}}$  induces a bijection $\boldsymbol{\Theta}: \SStarsp(\Kr(D)) \rightarrow \SStarval(D)  $ defined by  $\boldsymbol{\Theta}(s_{Y}):=\wedge_{\boldsymbol{\Theta}_{\!{_0}}(Y)}$, where  ${\boldsymbol{\Theta}_{\!{_0}}(Y)} = \{ V \in \Zar(D) \mid M(\X) \cap \Kr(D) \in Y\} =: \calbV(Y)$ and $M(\X)$ is the maximal ideal of $V(\X)$. 
 
 \begin{teor} \label{eab-op} {\rm  (cf. \cite[Theorem 5.11]{FiFoSp-4})}
	Let $D$ be an integral domain. Then, the bijection $\boldsymbol{\Theta}$, restricted to $\SStarf(D)$, induces a homeomorphism between $\SStarstabft(\Kr(D))$  and $\SStareabf(D)$. In particular, $\SStareabf(D)$ is a spectral space.
\end{teor}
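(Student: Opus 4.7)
The plan is to verify that $\boldsymbol{\Theta}$ restricts to a bijection $\SStarstabft(\Kr(D)) \to \SStareabf(D)$ and that this restriction is a homeomorphism with respect to the Zariski topologies; the ``in particular'' assertion is then immediate from Theorem \ref{stabili}, which guarantees that $\SStarstabft(\Kr(D))$ is a spectral space.

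First, I would verify the set-theoretic restriction. By Remark \ref{sp-st}(a), $\SStarstabft(\Kr(D))=\SStarspf(\Kr(D))$, and by Proposition \ref{closed-inverse} applied to $\Kr(D)$, the elements of this set are parameterized by the inverse-closed subsets of $\spec(\Kr(D))$ via $Y\mapsto s_Y$. Since $\theta\colon\spec(\Kr(D))\to\Zar(D)$ is a homeomorphism coinciding with $\boldsymbol{\Theta}_0$ on generization-closed subsets, $\boldsymbol{\Theta}_0$ yields a bijection between inverse-closed subsets of $\spec(\Kr(D))$ and those of $\Zar(D)$. By Proposition \ref{zar-inv}(2) and (4), the inverse-closed subsets $\calbW$ of $\Zar(D)$ correspond precisely to the valutative semistar operations $\wedge_\calbW$ of finite type; combined with the identity $\SStareabf(D)=\SStarval(D)\cap\SStarf(D)$, this identifies the image of $\boldsymbol{\Theta}|_{\SStarstabft(\Kr(D))}$ as exactly $\SStareabf(D)$.

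Next, I would establish the homeomorphism by matching subbases. A subbasic open of $\SStareabf(D)$ has the form $\texttt{V}_F\cap\SStareabf(D)$ with $F=(f_1,\ldots,f_n)\in\f(D)$; associating to $F$ the Kronecker polynomial $h_F:=f_1+f_2\X+\cdots+f_n\X^{n-1}\in D[\X]$ (so $\boldsymbol{c}(h_F)=F$), the condition $1\in F^{\wedge_\calbW}$ is equivalent to $FV=V$ for every $V\in\calbW$, in turn to $h_F$ being a unit in each $V(\X)$, in turn to $h_F\notin M_V(\X)\cap\Kr(D)$ for every $V\in\calbW$; tracing through $\theta^{-1}$, this matches precisely with $\widetilde{\texttt{U}}_{h_F\Kr(D)}\cap\SStarstabft(\Kr(D))$. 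In the reverse direction, since $\Kr(D)$ is a B\'ezout domain, every finitely generated integral ideal of $\Kr(D)$ is principal, of the form $h\Kr(D)$ with $h=f/g$ and $f,g\in D[\X]$; a symmetric content computation then rewrites $\widetilde{\texttt{U}}_{h\Kr(D)}\cap\SStarstabft(\Kr(D))$ as $(\texttt{V}_{\boldsymbol{c}(f)}\cap\texttt{V}_{\boldsymbol{c}(g)})\cap\SStareabf(D)$, which is open. Hence $\boldsymbol{\Theta}$ and its inverse both carry subbasic opens to opens, and the bijection is a homeomorphism.

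The main obstacle I expect is the clean execution of the content/Gauss identifications in the subbasis-matching step, specifically the classical facts that $F\Kr(D)=h_F\Kr(D)$ and that $h_F$ is a unit of $V(\X)$ exactly when $FV=V$. Both are standard consequences of the construction of the $b$-Kronecker function ring (together with the fact that $V(\X)$ is the Gaussian extension of $V$ to $K(\X)$), but they must be invoked carefully to make the identification of topologies precise. Once these ingredients are in hand, combining the resulting homeomorphism with Theorem \ref{stabili} yields the conclusion that $\SStareabf(D)$ is a spectral space.
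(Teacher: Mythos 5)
Your first paragraph (the set-theoretic identification via Remark \ref{sp-st}(a), Proposition \ref{closed-inverse} and Proposition \ref{zar-inv}) is fine, and the ``in particular'' via Theorem \ref{stabili} is immediate; note also that the survey gives no proof of Theorem \ref{eab-op} beyond the citation of \cite[Theorem 5.11]{FiFoSp-4}, so what matters is whether your subbasis matching works. The forward direction survives a small inaccuracy: for a \emph{fractional} $F\in\f(D)$ the condition $1\in F^{\wedge_{\calbW}}$ means $FV\supseteq V$, not $FV=V$, but one still gets $\boldsymbol{\Theta}^{-1}(\texttt{V}_F\cap\SStareabf(D))=\texttt{V}_{F\Kr(D)}\cap\SStarstabft(\Kr(D))$, which is open, so $\boldsymbol{\Theta}$ restricted is continuous. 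The genuine gap is in the reverse direction. For $J=h\Kr(D)$ with $h=f/g\in\Kr(D)$, the condition $s_Y\in\widetilde{\texttt{U}}_J$ says that $h$ is a unit in $V(\X)$ for every $V$ corresponding to a prime of $Y$, i.e.\ $v(\boldsymbol{c}(f))=v(\boldsymbol{c}(g))$ for all such $V$; it does \emph{not} say $v(\boldsymbol{c}(f))=0=v(\boldsymbol{c}(g))$. Hence $\boldsymbol{\Theta}\bigl(\widetilde{\texttt{U}}_J\cap\SStarstabft(\Kr(D))\bigr)$ is in general strictly larger than $(\texttt{V}_{\boldsymbol{c}(f)}\cap\texttt{V}_{\boldsymbol{c}(g)})\cap\SStareabf(D)$. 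Concretely, take $D=k[x,y]$, $f=x^2+y\X$, $g=x+y\X$ (so $h=f/g\in\Kr(D)$ because $\boldsymbol{c}(f)^{\mbox{\it\tiny\texttt{b}}}\subseteq\boldsymbol{c}(g)^{\mbox{\it\tiny\texttt{b}}}$), and let $V$ be the monomial valuation with $v(x)=2$, $v(y)=1$: then $h$ is a unit in $V(\X)$ (and in $W(\X)$ for every $W\supseteq V$), so $\wedge_{\{V\}}=\boldsymbol{\Theta}(s_{\{P_V\}^{\mbox{\rm\tiny\texttt{gen}}}})$ lies in the image of $\widetilde{\texttt{U}}_{h\Kr(D)}$, yet $1\notin(x,y)V$, so it is not in $\texttt{V}_{\boldsymbol{c}(g)}$. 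So your ``symmetric content computation'' proves openness of the wrong set, and precisely the delicate half of the homeomorphism (images of subbasic opens are open) is left unproved.

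What is missing is an argument that handles genuine denominators: $h$ is a unit in $V(\X)$ iff for some nonzero coefficient $f_i$ of $f$ one has $g_j/f_i\in V$ for all $j$, so the relevant locus in $\Zar(D)$ is a finite \emph{union} of basic open sets, and the condition ``$\calbW$ is contained in a finite union'' is not obviously a finite intersection of conditions of the form $1\in E^{\wedge_{\calbW}}$. One way to close the gap: for $E\in\f(D)$ set $\Omega_E:=\{V\in\Zar(D)\mid 1\in EV\}$ and observe that $\Omega_E\cup\Omega_{E'}=\Omega_{E+E'}$ and $\texttt{B}_{\{x_1,\dots,x_k\}}=\Omega_{x_1^{-1}D}\cap\dots\cap\Omega_{x_k^{-1}D}$; by distributivity every quasi-compact open subspace of $\Zar(D)$ (in particular the locus where $h$ is a unit in $V(\X)$, which is a finite union of basic opens) is a \emph{finite intersection} of sets $\Omega_E$ with $E\in\f(D)$. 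Transporting through the canonical inverse-closed representatives, the image of $\widetilde{\texttt{U}}_J\cap\SStarstabft(\Kr(D))$ becomes a finite intersection of sets $\texttt{V}_E\cap\SStareabf(D)$, hence open; with this replacement your outline becomes a complete proof, matching the role played by \cite[Theorem 5.11]{FiFoSp-4} in the survey.
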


Another interpretation of the previous theorem can be given by considering the spectral space $\xcal(X)$, when $X$ coincides with $\Zar(D)$. This point of view  sheds new light on the analogies between the spectral spaces  $\SStarstabft(D) \ (= \SStarspf(D)$, by Remark \ref{sp-st}(a))  and $\SStareabf(D)$, after recalling that $\xcal(D) :=\xcal(\spec(D))$ is canonically homeomorphic to $\SStarstabft(D)$ (Proposition \ref{closed-inverse}).

\begin{cor} \label{eab-spectral}
Let $D$ be an integral domain.
 The map 
 $$
 \Lambda:  \xcal(\mbox{\rm\Zar}(D)) \rightarrow \SStareabf(D), \mbox{ defined  by } \Lambda(\calbY):=   \wedge_{\calbY}\,,
 $$
  for each inverse-closed subset $\calbY$ of $\mbox{\rm\Zar}(D)$, is a homeomorphism.
\end{cor}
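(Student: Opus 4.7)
The plan is to realize $\Lambda$ as a composition of three known homeomorphisms, so that the result will follow immediately. Concretely, I would build the chain
$$
\xcal(\Zar(D)) \;\stackrel{\xcal(\theta^{-1})}{\longrightarrow}\; \xcal(\spec(\Kr(D))) \;\stackrel{\boldsymbol{s^\sharp}}{\longrightarrow}\; \SStarstabft(\Kr(D)) \;\stackrel{\boldsymbol{\Theta}}{\longrightarrow}\; \SStareabf(D),
$$
where the first arrow is induced, via functoriality of $\xcal$ (cf.\ the discussion after Proposition \ref{homeo}), by the homeomorphism $\theta:\spec(\Kr(D))\to\Zar(D)$ recalled in the paragraph preceding Theorem \ref{eab-op}; the second is the homeomorphism $\boldsymbol{s^\sharp}$ of Proposition \ref{closed-inverse} applied to the Kronecker function ring $\Kr(D)$; and the third is the homeomorphism of Theorem \ref{eab-op}.

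Next I would check that the composite is exactly $\Lambda$. Since $\theta$ is a homeomorphism of spectral spaces, $\xcal(\theta^{-1})$ sends an inverse-closed subset $\calbY$ of $\Zar(D)$ directly to $\theta^{-1}(\calbY)$ (no generization is needed, as a homeomorphism already respects both topologies and hence the classes of inverse-closed subsets). Applying $\boldsymbol{s^\sharp}$ then yields $s_{\theta^{-1}(\calbY)}$, and finally $\boldsymbol{\Theta}$ sends this to $\wedge_{\boldsymbol{\Theta}_{\!{_0}}(\theta^{-1}(\calbY))}$. The crucial identification is that $\boldsymbol{\Theta}_{\!{_0}}$ agrees with $\theta$ on subsets: unwinding the definition of $\theta$ (sending $P$ to the unique $V\in\Zar(D)$ with $V(\X)=\Kr(D)_P$, equivalently with $P=M_V(\X)\cap\Kr(D)$), we obtain $V\in\boldsymbol{\Theta}_{\!{_0}}(Y)\iff M_V(\X)\cap\Kr(D)\in Y\iff \theta^{-1}(V)\in Y\iff V\in\theta(Y)$. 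Hence the composition sends $\calbY$ to $\wedge_{\theta(\theta^{-1}(\calbY))}=\wedge_{\calbY}=\Lambda(\calbY)$, so $\Lambda$ is a composition of homeomorphisms and therefore itself a homeomorphism.

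The main obstacle, such as it is, amounts to careful bookkeeping: one must verify that $\xcal(\theta^{-1})$ genuinely restricts to a bijection between the classes of nonempty inverse-closed subsets on the two sides. This reduces to the fact, implicit in the paragraph preceding Theorem \ref{eab-op}, that $\theta$ matches subsets of $\spec(\Kr(D))$ of the form $Y=Y^{\downarrow}$ with subsets of $\Zar(D)$ of the form $\calbW=\calbW^{\uparrow}$; these are precisely the inverse-closed subsets once one remembers that, on $\Zar(D)$ with its Zariski topology, the induced partial order is the reverse of set-theoretic inclusion. Everything else in the argument is formal, since all the real work has been done in Proposition \ref{closed-inverse} and Theorem \ref{eab-op}.
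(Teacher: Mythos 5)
Your proposal is correct and is essentially the paper's own argument: the authors' sketch also factors $\Lambda$ through the canonical homeomorphisms $\xcal(\Zar(D))\simeq\xcal(\Kr(D))$ (induced by $\theta$), $\xcal(\Kr(D))\simeq\SStarstabft(\Kr(D))$ from Proposition \ref{closed-inverse}, and the restriction of $\boldsymbol{\Theta}$ from Theorem \ref{eab-op}, with your explicit check that the composite sends $\calbY$ to $\wedge_{\calbY}$ being exactly the bookkeeping the paper delegates to the reference. One small caveat: your closing remark that the generization-closed subsets ($Y=Y^{\downarrow}$, $\calbW=\calbW^{\uparrow}$) ``are precisely the inverse-closed subsets'' is an overstatement (inverse-closed implies closed under generizations, but the converse needs quasi-compactness); this is harmless here, since your earlier observation that the homeomorphism $\theta$ already matches the inverse topologies, and hence the inverse-closed subsets, is the correct and sufficient justification.
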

\begin{proof} {\sl (Sketch)} The proof is based on the following key facts.  The space   $\xcal(\Zar(D))$ is canonically homeomorphic to 
$\xcal(\Kr(D))$ \cite{FiFoSp-1}.  
 By Proposition \ref{closed-inverse},  $\xcal(\Kr(D)) \simeq \SStarstabft(\Kr(D)) \ (= \SStarspf(\Kr(D)))$ 
and finally that the  map $\boldsymbol{\Theta}_{\!\mbox{\it \tiny\texttt{f}}}$,
 restriction of  $\boldsymbol{\Theta}$ to ${\SStarspf(\Kr(D))}$,
 from $\SStarspf(\Kr(D))$ onto $\SStareabf(D)$, 
is a homeomorphism (for more details   \cite[Theorem 5.11(2)]{FiFoSp-4}).
\end{proof}

\bigskip


The following Hasse diagram summarizes the topological embeddings of some of the spaces considered in the present paper. All spaces are spectral, except possibly the three spaces denoted with {\Tiny {$^{(\blacklozenge)}$}}.
\begin{equation*}
{\Small
\begin{tikzcd}
& & \SStar(D)^{(\blacklozenge)}\arrow[hookleftarrow]{dr}\\
& \SStarval(D)^{(\blacklozenge)}\arrow[hook]{ur} & \arrow[hook]{u}\SStarf(D)\arrow[hookleftarrow]{dr} & \SStarstab(D)^{(\blacklozenge)}\\
\arrow[hookleftarrow,end anchor=north west]{dddr}\xcal(\Zar(D))~~~\simeq & \arrow[hook]{u}\SStareabf(D)\arrow[hook]{ur} & & \SStarstabft(D)\arrow[hook]{u} & \simeq~~~\xcal(\spec(D))\\
& & \arrow[hook]{uu}\overr(D)\\
& \overric(D)\arrow[hook]{ur} & \overrloc(D)\arrow[hook]{u}\\
& \arrow[hook]{u}\Zar(D)\arrow[hook]{ur} & \arrow[hook]{u}\Loc(D)\arrow[hook,start anchor=north east]{uuur} & \simeq & \spec(D)\arrow[hook]{uuu}\\
\end{tikzcd}
}
\end{equation*}



\end{document}